\documentclass[a4paper]{article}
\usepackage[margin=2.5cm]{geometry} 
\usepackage{tikz}
\usepackage{subcaption} 
\usepackage{pgfplots} 
\usetikzlibrary{shapes,arrows.meta,positioning,calc}
\usepackage{float}

\pgfplotsset{compat=1.17}

\usepackage{graphicx} 

\usepackage[english]{babel}

\usepackage{amsmath}
\usepackage{amsthm}
\usepackage{graphicx}
\usepackage[colorlinks=true, allcolors=blue]{hyperref}
\usepackage{amsfonts}
\usepackage{cases}
\usepackage{bm}
\usepackage[utf8]{inputenc}
\usepackage{tikz}
\usepackage{amssymb}
\newtheorem{lemma}{Lemma}[section]
\newtheorem{proposition}{Proposition}[section]

\newtheorem{theorem}{Theorem}[section]
\newtheorem{remark}{Remark}
\newtheorem{example}{Example}
\newtheorem{corollary}{Corollary}[section]

\title{The Ollivier Ricci flow with prescribed curvature on infinite graphs}
\author{Bobo Hua, Yong Lin, Shuang Liu}
\date{}

\begin{document}
\maketitle
\begin{abstract}
In this paper,  we consider the Ricci flow with prescribed curvature on  infinite graphs, which reads as
\begin{equation*}\label{flow-equation3}
    \frac{d}{dt}\omega(t)=-(\kappa(t)-\kappa^*)\omega(t),~~ t>0,
\end{equation*}
where $\omega$ is the edge weight, $\kappa$ and $\kappa^*$ are Lin-Lu-Yau Ricci curvature and the prescribed curvature on the set of edges, respectively. First, we establish the existence and uniqueness of the solution to the Ricci flow. Furthermore, we prove the convergence of the Ricci flow for graphs  with girth at least 6 under two different conditions. Our convergence result aligns with the conclusion of Rodin and Sullivan (J Differ Geom, 26(2) 1987) that a circle packing in the plane with the hexagonal pattern is the regular hexagonal packing. 
\end{abstract}
\noindent \textbf{Keywords:} Ricci flow; infinite graphs; Lin-Lu-Yau curvature; prescribed curvature; circle packing
\section{Introduction}
In 1982, Hamilton \cite{H82} introduced the Ricci flow on  a Riemannian manifold $(M,g(t))$, as follows 
\[\frac{\partial g_{ij}}{\partial t} = -2R_{ij},\]
where $R$ is the Ricci curvature. By constructing time-dependent evolution equations, the Ricci flow enables the metric of a manifold to undergo dynamic evolution and smoothing governed by its Ricci curvature. The Ricci flow played a key role in resolving the Poincaré Conjecture and the Geometrization Conjecture, see \cite{P02,2003Finite,2003Ricci}. 

Inspired by the remarkable success of curvature flows on manifolds, there has been considerable effort to generalize these flows to discrete spaces. Chow and Luo, among others, pioneered the introduction of combinatorial Ricci flow, Yamabe flow and Calabi flow  onto discrete surfaces; see, for example \cite{2002Combinatorial,ge2018combinatorial,luo2004combinatorial}. The combinatorial curvature flow can automatically resolve the existence and convergence problem for Thurston's circle packing metrics \cite{thurston1978geometry}. Subsequently, the combinatorial curvature flow was rapidly generalized to higher dimensions and more complex topological structures, and has found numerous applications; see, for example, \cite{Gu2018A,gu2018discrete} and the references therein.

In contrast to discrete surfaces, which rely on cell complexes, graphs involve only vertices and edges and do not take faces into account. In this paper, we study the Ricci flow on graphs based on Ollivier Ricci curvature, which measures differences between random walks via Wasserstein distances. The discrete Ricci flow based on Ollivier curvature $\kappa$ on graphs endowed with the edge metric $d$ was originally introduced by Ollivier \cite{2007Ricci} in 2009, as follows 
\begin{equation}\label{ollivier0}
    \frac{d}{dt}d_{xy}=-\kappa_{xy}d_{xy}.
\end{equation}
In analogy with the Ricci flow on manifolds,  the discrete Ricci flow characterizes the evolution of the edge metric governed by Ollivier Ricci curvature on the set of edges with respect to time. In 2019, Ni et al.  \cite{2019Community} effectively applied the time-discretization of the discrete Ricci flow to community detection of networks. In 2025,  Bai et al. \cite{2024Ollivier} pioneered the existence and uniqueness of the discrete Ricci flow. Subsequently, the (modified) Ricci flows  on graphs have been  greatly developed in both theory and applications, see for example  the existence and the uniqueness \cite{2024Evolution,MY24}, the convergence results \cite{BHLL25, lin2026ricci, MY25}, and the applications \cite{2025Discrete,2022Normalized,2025Community}. Besides the discrete Ricci flow based on Ollivier curvature, two other discrete Ricci flows have been developed on graphs, based respectively on  Bakry‑Émery curvature \cite{cushing2025bakry,hua2024version}, Forman curvature \cite{bai2026weighted,estrada2025forman,weber2017characterizing} and Entropy curvature \cite{erbar2020super,zhao2026efficient}. 

It is worth noting that the existing results on both combinatorial curvature flows and discrete curvature flows have been concentrated primarily on finite graphs. Recently, Ge et al. pioneered the well-posedness and convergence results for the combinatorial curvature flows on infinite triangulations \cite{ge2025combinatorial}. However, on infinite graphs, no studies concerning the discrete curvature flows have been established so far. In this paper, we focus on general infinite graphs-indepedent of any cell complex structure of dimension two, and establish the well-posed results and convergence of the solutions to the Ricci flow \eqref{ollivier0}. These results provide a discrete counterpart to the theory of Ricci flows on noncompact manifolds \cite{chen2009strong,chen2006uniqueness,giesen2011existence,shi1989deforming,topping2015uniqueness}.  Adopting the framework of  \cite{2025Ricci,lin2026ricci}, we fix the graph metric, serving only to encode the adjacency relations, while the edge weights evolve according to the Lin–Lu–Yau curvature. This choice not only elucidates the structure of the graph, but also facilitates a connection between the discrete curvature flow \eqref{ollivier0} and circle packing theory that is analogous to the combinatorial curvature flow \cite{lin2026ricci}.

Let $G=(V,E)$ be an infinite, locally finite and simple undirected graph with the set of vertices $V$ and the set of edges $E$. In this paper, we consider the following Ricci flow  with prescribed curvature based on Lin-Lu-Yau curvature (the modified Ollivier curvature) on $G$:
for any $e\in E$,
\begin{equation}\label{flow-equation3}
    \frac{d}{dt}\omega(t,e)=-(\kappa(t,e)-\kappa^*(e))\omega(t,e),~~ t>0
\end{equation}
with the positive initial value $\omega(0)$,
where $\kappa^*$ is a prescribed curvature. When $\kappa^*=0$,
it is reduced to the original Ricci flow 
\begin{equation}\label{flow-equation4}
    \frac{d}{dt}\omega(t,e)=-\kappa(t,e)\omega(t,e),~~ t>0.
\end{equation}
By choosing $\kappa^*=c$ with a constant $c\neq 0$, the Ricci flow is 
\begin{equation}\label{flow-equation_constant}
    \frac{d}{dt}\omega(t,e)=-(\kappa(t,e)-c)\omega(t,e),~~ t>0.
\end{equation}
Due to the definition of Lin-Lu-Yau curvature (see \eqref{ollivier}), we have $\kappa(a\omega)=\kappa(\omega)$ with any positive constant $a$. 
Then, $\omega(t,e)$ is the solution to  \eqref{flow-equation_constant} if and only if $e^{-tc}\omega(t,e)$ is the solution to  \eqref{flow-equation4}. 
By proving the locally Lipschitz property of $\kappa$, we first apply the method of exhaustion by finite subgraphs to establish the existence of a solution to the Ricci flow \eqref{flow-equation3}.

\begin{theorem} \label{main1} The Ricci flow \eqref{flow-equation3} on an infinite graph admits a solution $\omega_e \in C^{1,1}(0, \infty)$ for each $e \in E$ and any positive initial value $\omega(0)$. In particular, $\omega_e$ is in $C^\infty(0, \infty)$ provided the graph has girth at least 6.
\end{theorem}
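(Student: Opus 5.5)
Our plan is to exhaust $G$ by finite subgraphs, solve the flow on each, and pass to a limit. The limit step rests on uniform, local-in-space bounds on $\log\omega$ that come from the boundedness of the Lin--Lu--Yau curvature.

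\textbf{Step 1: structure of $\kappa$.} Following the framework of \cite{2025Ricci,lin2026ricci}, the curvature $\kappa(\omega,e)$ of an edge $e=xy$ depends only on the weights $\omega_{e'}$ of edges $e'$ in a bounded neighbourhood of $e$ (edges incident to $x$ or $y$, and edges joining their neighbours). We will show three facts.
\begin{enumerate}
\item $\kappa$ is bounded: $|\kappa(\omega,e)|\le 2$ for every positive $\omega$. This holds because the Wasserstein distance between the two measures is bounded by the graph diameter of the supports, which is at most $3$.
\item $\kappa$ is scale invariant.
\item $\kappa$ is locally Lipschitz in the finitely many relevant weights on compact subsets of $(0,\infty)^N$.
\end{enumerate}
For the Lipschitz property we use the limit-free formulation of Lin--Lu--Yau curvature via $\mu_x^{\alpha}$, so that $\kappa$ is expressed through a linear program whose data depend rationally on $\omega$. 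The optimal value of a finite LP is a piecewise-rational function of its data, and a min/max of finitely many smooth functions is locally Lipschitz.

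\textbf{Step 2: finite approximations.} Take an exhaustion $G_1\subset G_2\subset\cdots$ by finite subgraphs. For edges $e$ with their curvature neighbourhood inside $G_n$, evolve $\omega$ by \eqref{flow-equation3}. Freeze all other edges at $\omega(0)$; alternatively, use the finite graph $G_n$ with its own curvature and only ever consider edges deep inside. This gives a finite ODE system with a locally Lipschitz right-hand side, so Picard--Lindel\"of yields a unique local solution $\omega^n$. Since $\kappa$ and $\kappa^*$ are bounded (we assume $\kappa^*$ is bounded on each edge, a per-edge constant), we get
\[
\left|\frac{d}{dt}\log\omega^n_e\right|\le 2+|\kappa^*(e)|.
\]
Hence $\omega^n_e(t)\in[\omega_e(0)e^{-Ct},\,\omega_e(0)e^{Ct}]$, so there is no blow-up and the solution is global on $[0,\infty)$.

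\textbf{Step 3: passing to the limit.} For each fixed edge $e$ and each $T>0$, the family $\{\omega^n_e\}_n$ is uniformly bounded above and away from $0$ on $[0,T]$, and is equi-Lipschitz. Arzel\`a--Ascoli and a diagonal argument over the countable set $E$ give a subsequence with $\omega^n_e\to\omega_e$ locally uniformly for every $e$. The weights in the finite curvature neighbourhood of $e$ lie in a compact subset of $(0,\infty)^N$ on $[0,T]$. Local Lipschitz continuity of $\kappa$ therefore gives $\kappa(\omega^n(t),e)\to\kappa(\omega(t),e)$ uniformly on $[0,T]$. Passing to the limit in the integral form
\[
\omega^n_e(t)=\omega_e(0)+\int_0^t\bigl(\kappa^*-\kappa\bigr)\,\omega^n_e\,ds
\]
shows that $\omega$ solves \eqref{flow-equation3}.

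\textbf{Step 4: regularity.} We have $\omega'_e=(\kappa^*-\kappa(\omega))\,\omega_e$. Here $\omega$ is locally Lipschitz in $t$ and $\kappa$ is locally Lipschitz, so $\omega'_e$ is locally Lipschitz and $\omega_e\in C^{1,1}$.

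For girth at least $6$, the neighbours of $x$ and the neighbours of $y$ share no edges or common vertices other than through $xy$. The optimal transport plan is then essentially forced, and we expect $\kappa(e)$ to reduce to an explicit smooth (rational) function of the weights, with no min/max. Bootstrapping, if $\omega\in C^k$ then $\kappa(\omega(t))\in C^k$, hence $\omega'\in C^k$ and $\omega\in C^{k+1}$. This gives $C^\infty$.

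\textbf{Main obstacle.} The hardest step is the structural one: the local Lipschitz property of $\kappa$ in general, and the explicit smooth formula when the girth is at least $6$. For the latter we will try to compute the optimal coupling directly. Mass at $x$'s neighbours must travel distance $3$ to $y$'s neighbours except via $x$ and $y$. We expect the LLY formula to reduce to a smooth expression such as $\sum$ of normalized weights times constants, possibly with a term like $\min(\cdot,\cdot)$ involving $\omega_{xy}$. If such a min persists, we will need to argue that the relevant quantities cannot cross.
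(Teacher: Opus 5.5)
Your existence and $C^{1,1}$ argument follows the paper's route: the frozen-boundary exhaustion, the a priori bound on $\log\omega^l_e$ from $|\kappa|\le 2$, Arzel\`a--Ascoli with a diagonal argument, and local Lipschitz continuity of $\kappa$ to identify the limit. Passing to the limit in the integral equation with only $C^0$ convergence is a legitimate simplification of the paper's $l$-uniform Lipschitz bound on $d\omega^l/dt$.

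Your polytope argument for the Lipschitz property is also sound. After restricting $f$ to the finite set of vertices within distance $1$ of $x$ or $y$ and normalizing $f(x)=0$, the feasible set in \eqref{ollivier} does not depend on $\omega$; only the objective depends on $\omega$, and it does so rationally.

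One justification must be replaced. The bound $W(m^\alpha_x,m^\alpha_y)\le 3$ yields nothing after dividing by $1-\alpha$ and letting $\alpha\to 1$, and it gives no upper bound at all. Instead use $|\Delta f|\le 1$ for $f\in\operatorname{Lip}(1)$: every quantity whose infimum is taken in \eqref{ollivier} then lies in $[-2,2]$.

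\textbf{The genuine gap is the $C^\infty$ statement.} You only conjecture that $\kappa$ is smooth when the girth is at least $6$, and you leave open that a $\min$ survives. Without an explicit formula your bootstrap has nothing to start from.

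The missing computation is short with the limit-free formula you already use.
\begin{itemize}
\item Take $f\in\mathcal F$, normalized by $f(x)=0$ and $f(y)=1$; this is harmless because adding constants does not change $\nabla_{xy}\Delta f$.
\item Applying $|f(z)-f(x)|\le 1$ term by term gives $\Delta f(x)\ge \frac{2\omega_{xy}}{m(x)}-1$ and $\Delta f(y)\le 1-\frac{2\omega_{xy}}{m(y)}$.
\item Both bounds are attained at once by setting $f(u)=-1$ for every neighbour $u\neq y$ of $x$ and $f(v)=2$ for every neighbour $v\neq x$ of $y$, then extending to $V$ by McShane.
\item This $f$ is $1$-Lipschitz precisely because girth at least $6$ forces $d(u,y)=d(x,v)=2$ and $d(u,v)=3$ for all such $u,v$. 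Any shorter connection would close a cycle of length at most $5$ through $xy$.
\end{itemize}
Hence
\[\kappa_{xy}=2\omega_{xy}\Big(\frac{1}{m(x)}+\frac{1}{m(y)}\Big)-2,\]
which is the formula the paper imports from \cite{Florentin2017Ollivier}. It is rational in the finitely many weights of edges at $x$ and $y$, with denominators at least $\omega_{xy}>0$. So it is smooth and contains no $\min$. Your induction then goes through: if $\omega\in C^k$ on all edges, then $\kappa(\omega(t))\in C^k$, hence $\omega\in C^{k+1}$.
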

For the uniqueness of of the solution to the Ricci flow \eqref{flow-equation3}, further assumptions are required. We say that \textbf{Assumption $(A)$} is satisfied if  
the following three conditions hold simultaneously:
\begin{itemize}
       \item[($A_1$)]$D := \sup_{x \in V} d_x < \infty.$
    \item[($A_2$)] $\|\kappa^*\|_\infty := \sup_{e \in E} |\kappa^*(e)| < \infty$.
    \item[($A_3$)] there exists a $\delta>0$ such that $\delta^{-1}\leq\omega(0,e)\leq\delta$ for any $e\in E$.
\end{itemize}
Under the above consistency assumption, Grönwall's Lemma immediately yields the following uniqueness of the solution.
\begin{theorem} \label{main2}
Under the assumption of $(A)$,
the solution $\omega$  to the flow \eqref{flow-equation3} on an infinite graph  for any positive initial value on $[0,\infty)$ is unique. 
\end{theorem}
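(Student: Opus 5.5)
Suppose $\omega$ and $\tilde\omega$ are two solutions of \eqref{flow-equation3} on $[0,\infty)$ with the same initial value $\omega(0)$. We will show they agree on every interval $[0,T]$. The approach is to work with the logarithms $u(t,e)=\log\omega(t,e)$ and $\tilde u(t,e)=\log\tilde\omega(t,e)$, which satisfy
\[
\frac{d}{dt}u(t,e)=-(\kappa(t,e)-\kappa^*(e)).
\]
This equation has a globally bounded right-hand side. The Lin--Lu--Yau curvature of an edge always satisfies $|\kappa|\le 2$, and $\|\kappa^*\|_\infty<\infty$ by $(A_2)$. Hence $|u(t,e)-u(0,e)|\le (2+\|\kappa^*\|_\infty)t$. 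Together with $(A_3)$ this gives a uniform two-sided bound on $[0,T]$:
\[
\delta^{-1}e^{-Ct}\le \omega(t,e)\le \delta e^{Ct}, \qquad C=2+\|\kappa^*\|_\infty,
\]
and the same bound holds for $\tilde\omega$. So on $[0,T]$ all weights of both solutions lie in a fixed compact interval $[m_T,M_T]\subset(0,\infty)$, uniformly over $E$.

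The next step is a \emph{uniform} local Lipschitz estimate for the curvature. The locally Lipschitz property of $\kappa$ was already established for the existence theorem (Theorem~\ref{main1}). I would revisit it and check that the constant depends only on $D$ and the bounds $m_T,M_T$. The reason this should hold is that $\kappa_{xy}$ depends only on the weights of edges within distance about $2$ of $xy$. By $(A_1)$ there are at most $N=N(D)$ such edges. The curvature can be written through the Wasserstein distance between the weighted neighbourhood measures $\mu_x,\mu_y$, and those measures are Lipschitz in the weights with a constant controlled by $m_T,M_T,D$. The Wasserstein distance is $1$-Lipschitz in the measures in total variation, since transport costs are bounded by $3$ on the fixed graph metric. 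Taking the limit defining the Lin--Lu--Yau curvature preserves the Lipschitz bound; for instance one can use the Münch--Wojciechowski limit-free formula. The resulting estimate is
\[
|\kappa(\omega,e)-\kappa(\tilde\omega,e)|\le L\sup_{e'\in B(e)}|u(e')-\tilde u(e')|,
\]
where $B(e)$ is the bounded neighbourhood of $e$ and $L=L(D,m_T,M_T)$.

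Now set $f(t)=\sup_{e\in E}|u(t,e)-\tilde u(t,e)|$. This is finite by the bounds above. Integrating the difference of the two equations gives
\[
|u(t,e)-\tilde u(t,e)|\le\int_0^t|\kappa(s,e)-\tilde\kappa(s,e)|\,ds\le L\int_0^t f(s)\,ds,
\]
and taking the supremum over $e$ yields $f(t)\le L\int_0^t f(s)\,ds$. Here $f$ is a supremum of continuous functions, but it is bounded and measurable; if needed I would instead work with $g(t)=\sup_{s\le t}f(s)$, which is monotone. Grönwall's lemma then gives $f\equiv0$ on $[0,T]$. Since $T$ is arbitrary, $\omega=\tilde\omega$ on $[0,\infty)$.

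The main obstacle is the second step: obtaining a Lipschitz constant that is uniform over all edges of the infinite graph. This is exactly where $(A_1)$ and the uniform weight bounds are used. Without uniformity, the supremum $f$ could fail to satisfy a closed Grönwall inequality. The argument also relies on the a priori bounds from the boundedness of $\kappa$, which hold for any solution on the whole interval, not just for the one constructed in Theorem~\ref{main1}.
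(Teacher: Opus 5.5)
Your proposal is correct and follows essentially the same route as the paper. You derive uniform two-sided weight bounds on $[0,T]$ from $|\kappa|\le 2$ together with $(A_2)$--$(A_3)$, then use a Lipschitz estimate for $\kappa_i$ on $\overline{N}_i$ (Proposition~\ref{lip_kappa}) whose constant becomes uniform over $E$ under $(A_1)$, and finish with Gr\"onwall applied to the sup-norm of the difference of the two solutions. The differences are cosmetic: the paper works with $\omega$ directly, via $f_i(\omega)=-(\kappa_i(\omega)-\kappa_i^*)\omega_i$, rather than with $\log\omega$, and it takes the Lipschitz bound from the limit-free formula \eqref{ollivier} rather than from a Wasserstein/total-variation estimate. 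If you keep your route, note explicitly that the total-variation distance between $m_x^\alpha$ and $\tilde m_x^\alpha$ carries a factor $1-\alpha$; this is what makes your bound uniform in $\alpha$ and lets it survive the limit.
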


Notably, Assumption $(A)$ is not necessary for all graphs. For graphs  with girth at least 6, the uniqueness of solutions to the flow \eqref{flow-equation3} can be established without requiring Assumption $(A)$. Owing to the nice properties of the Lin–Lu–Yau curvature on graphs of girth at least 6 (see \eqref{cur_tree}),  we can employ the maximum principle (Lemma \ref{maximum}) to establish uniqueness for \eqref{flow-equation3}, in close analogy with the uniqueness proof for the combinatorial curvature flow on infinite triangulations \cite{ge2025combinatorial}, as shown below.
\begin{theorem}\label{main3}
  On an infinite graph  with girth at least 6, the solution $\omega_e$ for any $e\in E$ to the Ricci flow \eqref{flow-equation3} for any positive initial value is unique.
\end{theorem}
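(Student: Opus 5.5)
The plan is to compare two arbitrary solutions $\omega$ and $\tilde\omega$ of \eqref{flow-equation3} with the same positive initial value $\omega(0)$, and to show that they coincide by applying the maximum principle (Lemma \ref{maximum}) to their logarithmic difference. Set $u_e=\log\omega(t,e)$, $\tilde u_e=\log\tilde\omega(t,e)$ and $f_e=u_e-\tilde u_e$. Then \eqref{flow-equation3} becomes
\[
\frac{d}{dt}u_e=\kappa^*(e)-\kappa(t,e),
\]
so $f$ satisfies
\[
\frac{d}{dt}f_e=-\bigl(\kappa_e(\omega)-\kappa_e(\tilde\omega)\bigr),
\]
and $f_e(0)=0$ for every $e\in E$. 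The prescribed curvature cancels, so no boundedness of $\kappa^*$ is needed, and no Assumption $(A)$ is needed either. Both solutions are $C^1$ in $t$ and positive on $[0,T]$ for each finite $T$, so $f$ is well defined there.

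\textbf{Step 1: structure of $\kappa$ on girth at least 6.} I will use the explicit formula \eqref{cur_tree}. It expresses $\kappa_{xy}$ through $\omega_{xy}$ and the weights of the edges incident to $x$ and $y$ only, essentially through ratios such as $\omega_{xy}/\omega_x$ and $\omega_{xy}/\omega_y$, where $\omega_x=\sum_{z\sim x}\omega_{xz}$. From this formula I want to extract three properties:
\begin{itemize}
\item[(i)] $\kappa_e$ depends only on the finitely many edges $e'$ sharing a vertex with $e$;
\item[(ii)] $\kappa_e$ is nondecreasing in $u_e$ and nonincreasing in each $u_{e'}$ with $e'\neq e$ adjacent to $e$;
\item[(iii)] $\kappa_e$ is invariant under $u\mapsto u+c$ for constants $c$, which is the scale invariance $\kappa(a\omega)=\kappa(\omega)$.
\end{itemize}

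\textbf{Step 2: write the difference as a diffusion-type expression.} Combining (i)--(iii), I will write
\[
\kappa_e(\omega)-\kappa_e(\tilde\omega)=-\sum_{e'\sim e}b_{ee'}(t)\,\bigl(f_{e'}-f_e\bigr),
\qquad b_{ee'}(t)\ge 0 .
\]
If $\kappa_e$ is $C^1$ on the relevant region, I take the mean value theorem along the segment $\tilde u+s f$. Property (iii) gives $\sum_{e'}\partial\kappa_e/\partial u_{e'}=0$, which turns the resulting sum into differences $f_{e'}-f_e$, and (ii) gives the signs of the coefficients. If \eqref{cur_tree} involves a min/max and $\kappa_e$ is only locally Lipschitz, I instead obtain the same form by changing one variable at a time, using monotonicity and Lipschitz difference quotients. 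Either way the coefficients are locally bounded: on $[0,T]$ both solutions stay in a compact subset of $(0,\infty)^{E_{\mathrm{loc}}}$ on any finite edge set, so each $b_{ee'}$ is bounded on $[0,T]$ by a constant depending only on $e$ and $T$. Hence $f$ satisfies
\[
\frac{d}{dt}f_e=\sum_{e'\sim e}b_{ee'}(t)\,\bigl(f_{e'}-f_e\bigr),\qquad f(0)=0,
\]
which is a linear cooperative system on the line graph of $G$.

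\textbf{Step 3: apply the maximum principle.} I then apply Lemma \ref{maximum} to $f$ and to $-f$ to obtain $f\le 0$ and $f\ge 0$, hence $f\equiv0$ and $\omega=\tilde\omega$ on $[0,T]$ for every $T$. This step is the main obstacle. On an infinite graph such a principle normally needs some control at infinity, and the conclusion cannot simply come from looking at a maximizing edge, since the supremum of $f$ need not be attained. I expect that Lemma \ref{maximum}, modelled on \cite{ge2025combinatorial}, is formulated exactly for locally bounded, nonnegative coefficients with purely local dependence, as in (i). If it instead requires a growth condition on $f$, I would first try to derive an a priori bound on $|\kappa|$ from \eqref{cur_tree}, for instance $|\kappa_e|\le 2$ independently of the weights, which is typical for Lin--Lu--Yau curvature on graphs of girth at least 6. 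Since $\kappa^*$ cancels in $f$, this bound would give $|f_e(t)|\le 4t$ uniformly in $e$. Such a bounded $f$ should be within reach of a maximum principle, for example via a Phragmén--Lindelöf type argument or by testing against $\sup_e f_e(t)$ with Dini derivatives.
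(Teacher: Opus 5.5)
Your strategy is the paper's: pass to $f=\log\omega-\log\tilde\omega$, use translation invariance and the mean value theorem along $\tilde u+sf$ to write $\frac{d}{dt}f=\Delta_{\mu(t)}f$ with $\mu_{ij}(t)=-\int_0^1\partial\kappa_i/\partial u_j\,ds>0$, then invoke Lemma~\ref{maximum} with $h=0$. Your remark that $|f_e(t)|\le 4t$ (since $\kappa^*$ cancels and $|\kappa|\le 2$) is a useful addition. Lemma~\ref{maximum} requires $g$ to be bounded, and the paper leaves this implicit.

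\textbf{The gap is in Step 3.} Lemma~\ref{maximum} is not a statement about locally bounded coefficients. Its hypothesis \eqref{mu_bound} is a bound $\sum_{j\sim i}\mu_{ij}(t)\le C$ that is uniform over all edges $e_i$ and all $t\in[0,T]$. Your Step 2 only gives bounds on $b_{ee'}$ that depend on $e$ and $T$, via compactness on finite edge sets, and you expect that to suffice. It does not in general. On an infinite graph with unbounded weighted degrees, $\partial_t g=\Delta_\mu g$ with $g(0)=0$ can have nonzero bounded solutions; this is stochastic incompleteness, e.g.\ a half-line with edge weights $2^n$ and counting measure. Neither a Dini-derivative argument at a supremum that need not be attained nor a Phragm\'en--Lindel\"of argument rescues this without coefficient control.

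\textbf{The missing step is an explicit computation from \eqref{def:curvature}.} For $e_j\in E_x\setminus\{e_i\}$ one has $-\partial\kappa_i/\partial u_j=2e^{u_i+u_j}/m(x)^2$. Hence
\[
\sum_{e_j\in E_x\setminus\{e_i\}}\Bigl(-\frac{\partial\kappa_i}{\partial u_j}\Bigr)=2p(1-p)\le\frac{1}{2},\qquad p=\frac{e^{u_i}}{m(x)},
\]
and similarly at $y$. Integrating in $s$ gives $\sum_{j\sim i}\mu_{ij}(t)\le 1$ for every edge and every $t$, independently of degrees and weights; the paper records the cruder bound $4$. With this uniform bound and your bound $|f_e(t)|\le 4T$ on $[0,T]$, Lemma~\ref{maximum} applies and gives $f\equiv 0$.
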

At last, we demonstrate the convergence of the solution to the Ricci flow \eqref{flow-equation3} for graphs  with girth at least 6. Let $r=\ln \omega$. The Ricci flow \eqref{flow-equation3} can be rewritten as 
\begin{equation}\label{flow-equation_r}
    \frac{d}{dt}r(t,e)=-\kappa(t,e)+\kappa^*(e),\quad t>0, ~\forall e\in E
\end{equation}
with the initial value $r(0)(=\ln \omega(0))$. We say that the prescribed curvature $\kappa^*$ is attainable on $C(E)=\{f: E\rightarrow \mathbb{R}\}$, namely, there exists $r^*\in C(E)$ such that $\kappa(r^*)=\kappa^*$. 
We first prove the convergence of the Ricci flow \eqref{flow-equation_r} under the assumption that the initial wieght is close to the prescribed curvature weight in the sense of $\ell^\infty$ norm. Compared with the convergence results for the combinatorial curvature flow  on the hexagonal triangulation (Theorem 1.7 in \cite{ge2025combinatorial}) and hyperbolic background geometry (Theorem 1.4 in \cite{ge2025combinatorial}), we improve the smallness condition on the initial data from \(\ell^2\) to \(\ell^\infty\), and the result applies to all graphs with girth at least 6.

\begin{theorem}\label{main5}
Suppose the infinite graph has a girth of at least 6, such that the prescribed curvature $\kappa^*$ is attained at some $r^*\in \ell^\infty(E)$ and the condition $(A_1)$ is satisfied. Then, if the initial value $r(0)$ satisfies $r(0) - r^*\in \ell^2(E)$ and $\|r(0) - r^*\|_{\infty} \le 3\sqrt{3} \min\{1,(De^{2\|r^*\|_\infty})^{-2}\}$, the solution to the Ricci flow \eqref{flow-equation_r} converges to $r^*$ as $t \to \infty$. Moreover, the curvature converges to $\kappa^*$ as $t \to \infty$.
\end{theorem}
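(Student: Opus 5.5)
The plan is to study the deviation $u(t):=r(t)-r^*$ through an $\ell^2$ energy. The key inputs are the explicit Lin--Lu--Yau curvature formula \eqref{cur_tree} for graphs of girth at least $6$ and the maximum principle, Lemma \ref{maximum}.

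On such graphs there are no triangles, quadrilaterals or pentagons. So for an edge $e=xy$ the curvature $\kappa(e)$ is an explicit smooth function of $\omega_e$ and the weights of the edges incident to $x$ and to $y$. I expect it to be of the form $\kappa_{xy}=2\omega_{xy}\bigl(\mu_x^{-1}+\mu_y^{-1}\bigr)-2$ with $\mu_x=\sum_{z\sim x}\omega_{xz}$, and to depend only on the ratios $\omega_f/\omega_e$. Writing $\omega=e^r$, the first step is to compute the Jacobian $L(r):=\partial\kappa/\partial r$. Its diagonal entry is
\[
\partial_{r_e}\kappa_e=2\omega_e\Bigl(\tfrac{\mu_x-\omega_e}{\mu_x^2}+\tfrac{\mu_y-\omega_e}{\mu_y^2}\Bigr)\ge 0 .
\]
Its off-diagonal entries are $-2\omega_e\omega_f/\mu_x^2$ for $f\ni x$, and similarly at $y$. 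Thus $L$ is a local matrix with nonnegative diagonal, nonpositive off-diagonal entries and (weakly) zero row sums.

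The second step is an a priori $\ell^\infty$ bound. The quantity $\max_e |u(t,e)|$ is controlled by the maximum principle, Lemma \ref{maximum}, applied to $\partial_t u=-(\kappa(r^*+u)-\kappa(r^*))$. Any negativity of the coefficients coming from the nonlinearity is absorbed by the smallness hypothesis on $\|r(0)-r^*\|_\infty$. Combined with $(A_1)$ and $r^*\in\ell^\infty$, this gives uniform two-sided bounds on $\omega(t)$: every ratio $\omega_f/\omega_e$ lies in $[e^{-2\|r^*\|_\infty-2\|u\|_\infty},\,e^{2\|r^*\|_\infty+2\|u\|_\infty}]$, and every degree is at most $D$.

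The third step is the energy identity
\[
\frac{d}{dt}\tfrac12\|u\|_2^2=-\bigl\langle u,\ \kappa(r^*+u)-\kappa(r^*)\bigr\rangle=-\Bigl\langle u,\ \int_0^1 L(r^*+su)\,ds\;u\Bigr\rangle .
\]
It is justified on the infinite graph by exhausting with finite subgraphs, using that $u(0)\in\ell^2$ and that $L$ is a bounded local operator by step two.

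The main obstacle is to show that the symmetrized averaged operator is coercive, i.e.\ $\langle u,\bar L u\rangle\ge c\,\|\kappa(r)-\kappa^*\|_2^2$, or at least $\ge 0$ with a quantitative dissipation. Row sums vanish, so diagonal dominance is only weak, and $L$ is not symmetric because $\mu_x$ differs from $\mu_y$. My first attempt is to rewrite $\langle u,Lu\rangle$ vertex by vertex as a sum of squares $\sum_x\mu_x^{-2}\sum_{f,g\ni x}\omega_f\omega_g(u_f-u_g)^2$ plus nonnegative terms. The asymmetric error is then bounded by the oscillation of the weights, which is where the explicit threshold $3\sqrt3\min\{1,(De^{2\|r^*\|_\infty})^{-2}\}$ should come from (plausibly via optimizing $s\,e^{-s^2/\cdot}$-type bounds on $e^{u}-1$). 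Once $\frac{d}{dt}\|u\|_2^2\le -c\|\kappa-\kappa^*\|_2^2$ holds, $\|u\|_2$ is nonincreasing and $\int_0^\infty\|\kappa(t)-\kappa^*\|_2^2\,dt<\infty$.

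For the final step, $\partial_t\kappa$ is uniformly bounded by steps one and two, so the integrability above forces $\kappa(t)\to\kappa^*$, at least edgewise and in $\ell^2$. Since $u$ is bounded in $\ell^2$ and, by Theorem \ref{main1}, locally smooth, I take a subsequential limit $u(t_n)\to u_\infty$ edgewise. The limit has curvature $\kappa^*$, and uniqueness of the attaining weight within the small $\ell^\infty$-ball gives $u_\infty=0$. That uniqueness follows from the same coercivity applied to $r^*$ and $r^*+u_\infty$, after normalizing the scaling invariance $\kappa(r+c)=\kappa(r)$, which $u\in\ell^2$ excludes on an infinite graph. Hence $r(t)\to r^*$ and $\kappa(t)\to\kappa^*$.
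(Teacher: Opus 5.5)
Your architecture (an $\ell^2$ energy for $u=r-r^*$, integrability of the dissipation, then identifying the limit by rigidity) is sound and differs from the paper's. However, the step you call the main obstacle is misdiagnosed, and fixing it changes the picture.

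The Jacobian is symmetric. Two distinct edges $e,f$ share at most one vertex $x$, and
$\partial\kappa_e/\partial r_f=\partial\kappa_f/\partial r_e=-2\omega_e\omega_f/m(x)^2$
involves only $m(x)$. The paper records this just before \eqref{eq:par_kappa}. Formally, $J(r)$ is the Hessian of the convex function $2\sum_x\ln m(x)-2\sum_e r_e$.

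The vertex-by-vertex computation in the proof of Proposition~\ref{lemma:3.4} is valid at every $r$, not only at $r^*$. It gives exactly
\[
\langle u,J(r)u\rangle=\sum_{x\in V}\sum_{e,f\in E_x}p^x_e\,p^x_f\,(u_e-u_f)^2,\qquad p^x_e=\frac{e^{r_e}}{m(x)},
\]
with no asymmetric error term. Hence $\bar L=\int_0^1 J(r^*+su)\,ds$ is symmetric and positive semidefinite. Its absolute row sums are at most $2$ for every $r$, with no a priori weight bounds needed. Therefore $\|\bar L\|_{\ell^2\to\ell^2}\le 2$, and so $\langle u,\bar Lu\rangle\ge\tfrac12\|\bar Lu\|_2^2=\tfrac12\|\kappa(r)-\kappa^*\|_2^2$ for all $u\in\ell^2(E)$.

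Step two likewise has no negative coefficients to absorb. We have $\partial_t u=\Delta_{\mu(t)}u$ with $\mu_{ij}>0$ as in \eqref{mu}. The right tool is Maximum principle (II), Lemma~\ref{max}, passed to the limit of the exhaustion, which gives $\|u(t)\|_\infty\le\|u(0)\|_\infty$ unconditionally. Lemma~\ref{maximum}, which you cite, is a uniqueness statement and gives no bound.

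So the smallness hypothesis never enters your argument, and you should drop the speculation about where $3\sqrt3$ comes from. In the paper it comes from the bound $\sqrt3/18$ on the Hessian of the remainder $F$ (Lemma~\ref{lemma:3.5}). That bound is needed only because the paper linearizes at $r^*$.

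This is the real difference between the two routes. The paper writes $\kappa(r)=\kappa^*-\Delta_J u+F(u)$ with the fixed form $\mathcal{E}$ at $r^*$. It uses smallness to absorb $\sum_i u_iF_i(u)$ into $\mathcal{E}(u)$ and to run a maximum principle for the perturbed equation. It then gets $\mathcal{E}(u(t))\to0$ by Barbalat's lemma and $\|u(t)\|_\infty\to0$ by Lemma~\ref{lemma:3.7}. Your route instead uses the exact monotone structure along the segment from $r^*$ to $r$, which holds globally. It identifies the limit by compactness plus $\ell^2$-rigidity: either Proposition~\ref{pro-curvature}, whose proof only uses $\tilde r-r\in\ell^p$, or the sum-of-squares identity at $u_\infty$. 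This yields a cleaner argument that does not need the smallness assumption.

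Two points need tightening.

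\emph{Justifying the energy identity.} Do not justify it via the exhaustion \eqref{flow_exhaustion}. Its Dirichlet data are $r(0)$, so $u\neq0$ on $\partial E^l$ and boundary terms appear. Instead, note that $u\mapsto\kappa(r^*+u)-\kappa^*$ is globally $2$-Lipschitz on $\ell^2(E)$ by the same operator bound. Solve in $C^1([0,\infty);\ell^2(E))$, and identify this solution with the flow via Theorem~\ref{main3}.

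\emph{Mode of convergence.} Your subsequence argument gives only edgewise convergence, whereas the paper proves $\|u(t)\|_\infty\to0$. To recover that:
\begin{itemize}
\item use $\|u(t)\|_\infty\le\|u(0)\|_\infty$ and $(A_1)$ to bound the weights of $\bar L$ below by $(De^{2\|r^*\|_\infty+2\|u(0)\|_\infty})^{-2}$;
\item deduce $\mathcal{E}(u)\le C\langle u,\bar Lu\rangle$, hence $\int_0^\infty\mathcal{E}(u(t))\,dt<\infty$;
\item finish with Barbalat's lemma and Lemma~\ref{lemma:3.7}, as in the paper.
\end{itemize}
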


In addition, if the initial weight curvature is less than or equal to (respectively, greater than or equal to) the prescribed curvature for each edge, by the maximum principles (Lemma \ref{max_min} and Lemma \ref{max}), then the boundedness of the initial value guarantees the convergence of the Ricci flow \eqref{flow-equation_r}, as follows.

\begin{theorem}\label{main4} Consider an infinite graph  with girth at least 6. Suppose there exists $r^* \in \ell^\infty(E)$ that realizes the prescribed curvature $\kappa^*$. If the initial value $r(0)$ satisfies either:
\begin{itemize}
    \item $\sup_{e \in E} r_e(0) < \infty$ and $\kappa(r(0)) \le \kappa^*$, or
    \item $\inf_{e \in E} r_e(0) > -\infty$ and $\kappa(r(0)) \ge \kappa^*$,
\end{itemize}
then the solution to the Ricci flow \eqref{flow-equation_r} converges to some $r(\infty)\in (-\infty, \sup_{e_j\in E}r_j(0)+2\|r^*\|_\infty]$ (or $ \in [\inf_{e_j\in E}r_j(0)-2\|r^*\|_\infty,+\infty)$) realizing $\kappa^*$ and the curvature converges to $\kappa^*$ as $t \to \infty$.
\end{theorem}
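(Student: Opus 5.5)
We prove Theorem \ref{main4} in the first case, $\sup_e r_e(0)<\infty$ and $\kappa(r(0))\le\kappa^*$. The second case follows by the symmetric argument with inequalities reversed and Lemma \ref{max} in place of Lemma \ref{max_min}.

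The plan has three steps: show the curvature stays below $\kappa^*$, which makes $r$ monotone; show $r$ is bounded below; then pass to the limit. We use the explicit girth-$\ge 6$ formula \eqref{cur_tree} for $\kappa_{xy}$ in terms of the weights around the edge $xy$, and the existence and uniqueness results (Theorems \ref{main1}, \ref{main3}).

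\textbf{Step 1 (monotonicity).} Set $u(t,e)=\kappa(t,e)-\kappa^*(e)$. Since $\omega\in C^\infty$ on graphs of girth at least 6, differentiating \eqref{cur_tree} along \eqref{flow-equation_r} gives a linear parabolic-type equation
\[
\partial_t u(t,e)=\sum_{e'\sim e} a_{ee'}(t)\bigl(u(t,e')-u(t,e)\bigr),\qquad a_{ee'}\ge 0 .
\]
Here the sum is over edges adjacent to $e$, and the coefficients $a_{ee'}$ come from derivatives of $\kappa$ in the neighbouring weights. The maximum principle, Lemma \ref{max_min}, applied with $u(0)\le 0$, then yields $u(t)\le 0$ for all $t$. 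Hence $\frac{d}{dt}r_e=-u\ge 0$, so each $r_e(t)$ is nondecreasing in $t$.

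\textbf{Step 2 (upper bound).} We compare $r(t)$ with the static solution $r^*+c$, where $c=\sup_e r_e(0)+\|r^*\|_\infty$, so that $r(0)\le r^*+c$. Since $\kappa$ is scale invariant, $r^*+c$ is a stationary solution of \eqref{flow-equation_r}. The comparison principle for this flow, which again is Lemma \ref{max_min} or Lemma \ref{maximum} applied to the difference $r-(r^*+c)$ and uses the monotone dependence of $\kappa_e$ on the weights in \eqref{cur_tree}, gives $r(t)\le r^*+c$. This yields $r_e(t)\le \sup_j r_j(0)+2\|r^*\|_\infty$, matching the stated interval.

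\textbf{Step 3 (limit).} Each $r_e(t)$ is monotone and bounded above, so $r_e(t)\to r_e(\infty)$ pointwise, with $r_e(\infty)\le \sup_j r_j(0)+2\|r^*\|_\infty$. We must rule out $r_e(\infty)=-\infty$. This cannot happen, because $r_e(t)$ is nondecreasing and so $r_e(t)\ge r_e(0)$. Hence $r(\infty)$ is finite and lies in the claimed interval.

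Since $\kappa_e$ depends continuously on the finitely many weights in its neighbourhood, $\kappa(t,e)\to\kappa(r(\infty))_e$. Also $\int_0^\infty(\kappa^*-\kappa)\,dt=r_e(\infty)-r_e(0)<\infty$. To conclude that the integrand tends to zero, we need uniform continuity in $t$. We obtain it from the bound $|\frac{d}{dt}\kappa|\le C$, which holds locally because the weights near $e$ stay in a compact range $[r(0),\,r^*+c]$. An integrable, uniformly continuous, nonnegative function tends to $0$, so $\kappa(r(\infty))=\kappa^*$.

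\textbf{Main obstacle.} The hard part is Steps 1 and 2 on an infinite graph. The maximum principle needs growth or boundedness control at infinity. In Step 1, $u$ is bounded because $|\kappa|\le 2$ and $\kappa^*$ is bounded. In Step 2, $r$ may be unbounded below only through $r(0)$, and the monotonicity from Step 1 gives only the lower bound $r\ge r(0)$, which is not uniform. I would first check that Lemma \ref{max_min} requires only an upper bound on the quantity being controlled, namely $r-(r^*+c)\le$ const. That bound holds by monotonicity plus a crude estimate $|\dot r|\le 2+\|\kappa^*\|_\infty$. If Lemma \ref{max_min} needs more than this, I would instead localize with an exhaustion by finite subgraphs, as in the proof of Theorem \ref{main1}.
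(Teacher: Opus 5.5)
Your outline is sound, but it takes a different route from the paper. You run the maximum principle directly on the infinite graph, whereas the paper applies it only to the finite Dirichlet problems \eqref{flow_exhaustion}. There, Lemma~\ref{max_min} gives $\kappa(r^l(t))\le\kappa^*$ on $E^l$, and Lemma~\ref{max} shows that $\max_{E^l\cup\partial E^l}(r^l-r^*)$ is non-increasing, so $r^l\le\sup_j r_j(0)+2\|r^*\|_\infty$. Both bounds then pass to the exhaustion limit $r(t)$. The identification $\kappa(r(\infty))=\kappa^*$ is by contradiction: a strict gap would make $r_i$ grow linearly past the upper bound. That is your integrability argument, and the uniform-continuity detour is unnecessary because $\kappa_e(t)$ already converges.

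The lemmas you cite do not apply as stated to your infinite-graph quantities. Lemma~\ref{max_min} is about $r^l$ on $E^l$, and Lemma~\ref{maximum} needs $g(0)\equiv0$ and $g$ bounded. So Steps~1--2, as written, rest either on your exhaustion fallback, which is exactly the paper's proof, or on an infinite-graph comparison principle you have not stated.

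That comparison principle is short, and it removes your ``main obstacle'', because only the side being controlled needs a bound. Suppose
\begin{itemize}
\item $\dot g_i=\sum_{j\sim i}a_{ij}(t)(g_j-g_i)$ with $a_{ij}\ge0$ and $\sum_{j\sim i}a_{ij}\le C$;
\item $g(0)\le0$;
\item $M(t):=\sup_j g_j^+(t)$ is bounded on $[0,T]$.
\end{itemize}
Then $\frac{d}{dt}(e^{Ct}g_i)=e^{Ct}\bigl(\sum_{j\sim i}a_{ij}g_j+(C-\sum_{j\sim i}a_{ij})g_i\bigr)\le Ce^{Ct}M(t)$. Integrating, $e^{Ct}M(t)\le C\int_0^t e^{Cs}M(s)\,ds$, and Gr\"onwall gives $M\equiv0$.

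In Step~1, take $g=\kappa-\kappa^*$ and $a_{ij}=-J_{ij}(r(t))$. The row sum is $J_{ii}\le1$, and $g^+\le4$.

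In Step~2, take $g=r-r^*-c$. Then $\dot g_i=\kappa_i(r^*+c)-\kappa_i(r)=\sum_{j\sim i}\mu_{ij}(g_j-g_i)$, with $\mu_{ij}$ as in \eqref{mu} for the pair $(r(t),r^*+c)$. The row sums are $\int_0^1J_{ii}\,ds\le1$, and $g\le4t$ because $\dot r\le4$. No lower bound on $r(0)$ is needed.

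What the direct route buys is that it applies to every solution on $G$, not only the exhaustion limit, so it does not need Theorem~\ref{main3} to speak of ``the'' solution. The paper's route buys a maximum principle that is elementary on finite sets, where maxima are attained. It pays for this with the boundary terms in Lemma~\ref{max_min} and a compactness passage to the limit.
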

\begin{remark} 
Depending on the initial data or the graph's structure, the limit $r(\infty)$ may either take the form $r^*+c\mathbf{1}$ for some constant $c \in \mathbb{R}$, or fail to lie in the affine space $r^* + \mathbb{R}\mathbf{1}$. For instance, on an infinite 3-regular tree $T_3$, given $r^*=0$ and $\kappa^*=-\frac{2}{3}$, the limit $r(\infty)$ can be either $0$ or a non-constant metric $\tilde{r}\in \ell^\infty(E)$ (see Proposition \ref{pro:T_3} for details). In contrast, on the hexagonal lattice, the limit $r(\infty)$ is necessarily equal to $r^*+c\mathbf{1}$ for some constant $c$ (see Corollary \ref{co:hexagonal}). Furthermore, if we restrict to $r^* \in \ell^p(E)$ with $p\in [1,\infty)$, the limit reduces exactly to $r(\infty)=r^*$ (see Proposition \ref{pro-curvature}).
\end{remark}
\begin{remark} 
Let the edge weight $\omega$ be the length of the edge.
   On a surface with an infinite hexagonal circle packing, from Theorem \ref{main5}, the Ricci flow \eqref{flow-equation_r} with the constant curvature weight $r^*=0$ evolves a 3-regular hexagonal lattice, with an initial value satisfying 
    $$r(0)\in \ell^2(E)\quad \mbox{and} \quad \|r(0)\|_\infty \leq \frac{\sqrt{3}}{3},$$
    into a hexagonal lattice with uniform unit edge lengths, i.e., $r(\infty)=0$. Moreover, by virtue of Theorem \ref{main4}, the same Ricci flow \eqref{flow-equation_r} with $r^*=0$ can drive a 3-regular hexagonal lattice, under another initial conditions
\[\sup_{e \in E} r_e(0) < \infty,\quad \kappa(r(0)) \le -\frac{2}{3},\]
    into a hexagonal lattice with uniform edge lengths $e^c$, where the scaling constant satisfies $c \le\sup_{e \in E} r_e(0)$. 

    This long-time asymptotic behavior perfectly agrees with the celebrated rigidity theorem of Rodin and Sullivan \cite{rodin1987convergence}, which asserts that any circle packing in the plane with a hexagonal pattern is the regular hexagonal packing. 
\end{remark}

The structure of the rest of this paper is as follows: 
In Section \ref{section_lly}, we introduce some notations and preliminaries. In Section \ref{rigity},  we study the rigidity of Lin-Lu-Yau curvature on graphs with girth at least 6.
In Section \ref{section2},  the existence and uniqueness of solution to the Ricci flow with the prescribed curvature are established. In Section \ref{section3}, we only consider graphs  with girth at least $6$, and prove the convergence of the Ricci flow with the prescribed curvature under two different conditions related to the initial value.

\section{Notations}\label{section_lly}
Let $G=(V,E)$ be an infinite graph, where $V$ is the set of vertices, and $E\subset V\times V$ is the set of edges. Denote $d_x=\#\{y\in V: y\sim x\}$ by the degree of $x\in V$.  We restrict on locally finite graph, namely, $d_x<\infty$ for any $x\in V$. Denote $N(e)$ by the union of edges incident to the endpoints $x$ and $y$ of $e\in E$. Let $E_x$ denote the set of edges incident to $x$.  
 The distance $d(x, y)$ between two vertices $x, y \in V$ is defined as the minimal number of edges connecting them. Denote $C(V)$ and $C(E)$ by the sets of functions on $V$ and $E$, separately. 
 
 The space $\ell^p(E), p\in[1,\infty)$  consists of all functions such that the $p$-th power of the absolute value is summable over $E$, as follows
$$\ell^p(E) := \left\{ u: E \to \mathbb{R} \ \bigg| \ \sum_{e \in E} |u_e|^p < \infty \right\}.$$
The $\ell^p$-norm of $u$ is defined as$$\|u\|_{\ell^p(E)} := \left( \sum_{e \in E} |u_e|^p \right)^{1/p}.$$
The space $\ell^\infty(E)$ consists of all functions  that are uniformly bounded on $E$
$$\ell^\infty(E) := \left\{ u: E \to \mathbb{R} \ \bigg| \ \sup_{e \in E} |u_e| < \infty \right\}.$$
The $\ell^\infty$-norm  of $u$ is defined as
$$\|u\|_{\ell^\infty(E)} := \sup_{e \in E} |u_e|.$$
Given $1 \le p \le q \le \infty$, the monotonicity of $\ell^p$ norms ensures that $\|f\|_{\ell^q} \le \|f\|_{\ell^p}$. Consequently, we have  $\ell^p(E) \hookrightarrow \ell^q(E)$.

Let \( x\neq y\in V \), \( \mu_x\) and \( \mu_y \) are two probability distributions defined on $V$. The wasserstein distance $W(\mu_x,\mu_y)$ is defined by 
\[W(\mu_x,\mu_y)=\inf_{A \in \Pi(\mu_x, \mu_y)} \sum_{u,v \in V} d(u,v) A(u,v),\]
where \( A : V \times V \to [0, 1] \) is the transport plan from \( \mu_x \) to \( \mu_y \), satisfying
\[
\begin{cases}
\sum_{v \in V} A(u,v) =  \mu_x (u), & u \in V, \\
\sum_{u \in V} A(u,v) =  \mu_y (v), & v \in V,
\end{cases}
\]
and the minimum is taken over all transport plans from \(  \mu_x \) to \(  \mu_x \). 

We equip the set of edges with a positive weight function $\omega\in C(E)$. Let $m(x):=\sum_{y\sim x}\omega_{xy}$.  For $\alpha > 0$,  a finitely supported probability measure on $V$ is denoted by
\begin{equation*}
    m_x^\alpha(y):=\left\{\begin{aligned}
    &\alpha,&~~ y=x,\\
    &(1-\alpha)\frac{\omega_{xy}}{m(x)},&~~y\sim x,\\
    &0,&~~\mbox{otherwise.}
\end{aligned}
  \right.
\end{equation*}
 Lin-Lu-Yau curvature is defined by,   for $x\neq y$ 
\[\kappa(x,y):=\lim_{\alpha\rightarrow 1^-}\frac{1}{1-\alpha}\left(1-\frac{W(m_x^\alpha,m_y^\alpha)}{d(x,y)}\right),\]
where $W(m_x^\alpha,m_y^\alpha)$ denotes the wasserstein distance between $m_x^\alpha$ and $m_y^\alpha$. 

   One limit-free form of Lin-Lu-Yau  curvature was proposed by Münch and Wojciechowski \cite{Florentin2017Ollivier}. For any $f\in C(V)$, the Laplacian on $C(V)$ is defined by 
\[\Delta f(x)=\frac{1}{m(x)}\sum_{z\sim x}\omega_{xz}(f(z)-f(x)),~~\forall x\in V.\]
Denote
      \[\nabla_{xy}f := \frac{f(x) - f(y)}{d(x, y)}\]
      by the gradient of $f$ with respect to $x$ and $y.$
Let $\|\nabla f\|_\infty=\sup_{x,y\in V}|\nabla_{xy}f|$. For $K \geq 0$,
\[
\operatorname{Lip}(K) := \{ f \in C(V) : \|\nabla f\|_\infty \leq K \}
\]
is the set of all $K$-Lipschitz functions on $V$ with respect to $d$.  For any $x,y\in V$, 
\begin{equation}\label{ollivier}
\kappa(x,y)=\inf_{f\in \mathcal{F} }\nabla_{xy}\Delta f,
\end{equation}
where $\mathcal{F}:=\{f\in \operatorname{Lip}(1),\nabla_{yx}f=1\}$.

Even in the case of infinite graphs, Lin-Lu-Yau  curvature for an edge $e$ on general graphs remains a local property, determined by $N(e)$ and itself.  Consequently, the curvature on infinite graphs admits a uniform bound, just as it does for finite graphs, see Lemma 2.1 in \cite{lin2026ricci}.
\begin{lemma}
The Lin-Lu-Yau Ricci curvature on infinite graphs satisfies $|\kappa_e| \leq 2$ for all  $e \in E$.
\end{lemma}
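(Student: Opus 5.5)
We prove $|\kappa_e|\le 2$ directly from the limit-free formula \eqref{ollivier}, using only the fact that $\kappa(x,y)$ for an edge $e=xy$ depends on $f$ through values at $x$, $y$ and their neighbours. The argument is local, so finiteness of the graph plays no role; this is why the bound of Lemma 2.1 in \cite{lin2026ricci} carries over verbatim.

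Fix an edge $e=xy$, so $d(x,y)=1$, and take any $f\in\mathcal{F}$. Then $f$ is $1$-Lipschitz with respect to the graph distance and $f(y)-f(x)=1$. For every neighbour $z\sim x$ we have $|f(z)-f(x)|\le 1$. The weights $\omega_{xz}/m(x)$ are nonnegative and sum to one, so
\[
|\Delta f(x)|\le \frac{1}{m(x)}\sum_{z\sim x}\omega_{xz}\,|f(z)-f(x)|\le 1 .
\]
The same bound holds at $y$. Since $d(x,y)=1$, this gives
\[
\nabla_{xy}\Delta f=\Delta f(x)-\Delta f(y)\in[-2,2]
\]
for every $f\in\mathcal{F}$.

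The class $\mathcal{F}$ is nonempty; for instance $f(\cdot)=d(\cdot,x)$ belongs to it. The infimum in \eqref{ollivier} is therefore taken over a nonempty set of numbers lying in $[-2,2]$, and so $-2\le\kappa(x,y)\le 2$.

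The only point needing care is that the infimum is over an infinite-dimensional class of functions on an infinite vertex set. This causes no difficulty: the bound holds pointwise for each admissible $f$ and is uniform in $f$, so it passes to the infimum. Nothing in the argument depends on the size of the graph or on the particular weights $\omega$.
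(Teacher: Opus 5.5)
Your proof is correct, and it is more self-contained than what the paper does. The paper gives no computation of its own. It remarks that $\kappa_e$ is a local quantity and imports the bound from the finite-graph result, Lemma 2.1 of \cite{lin2026ricci}.

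You instead estimate directly on the infinite graph via \eqref{ollivier}. The bounds $|\Delta f(x)|\le 1$ and $|\Delta f(y)|\le 1$ hold for every $f\in\mathcal{F}$, uniformly in $f$, so they pass to the infimum over this infinite-dimensional class. No reduction to a finite graph is needed.

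What this buys you is that you never have to justify the locality step. Turning the paper's remark into a proof requires checking that $\kappa_e$ computed in $G$ agrees with $\kappa_e$ computed in some finite subgraph. Since $\mathcal{F}$ is defined through the graph distance, this is slightly more than ``$\kappa_e$ is determined by $N(e)$''. Two neighbours $a\sim x$, $b\sim y$ can be at distance $2$ through a vertex outside $N(e)$ (a pentagon through $e$). So the finite subgraph has to be taken larger, for instance the induced subgraph on the ball of radius $2$ about $x$.

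The paper's route is shorter on the page, given the citation. Yours makes it transparent that neither the size of the graph nor the weights $\omega$ play any role in the bound.
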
\label{bounds}

\section{The  rigidity of  Lin-Lu-Yau curvature on graphs with girth at least 6}\label{rigity}
In this section, we focus on graphs with girth at least 6. There are many graphs with girth at least 6. For any graph, if you insert more than one vertex on each edge, the resulting graph will have girth at least 6.
From \eqref{ollivier}, Lin-Lu-Yau curvature on the edge $e=(x,y)$ in the graph  with girth at least 6 is
\begin{equation*}\label{cur_tree}
\kappa_{e}=2\omega_{xy}\left(\frac{1}{m(x)}+\frac{1}{m(y)}\right)-2,
\end{equation*}
see \cite{Florentin2017Ollivier} in details. Let $r=\ln \omega$. Lin-Lu-Yau curvature can be rewritten as 
\begin{equation}\label{def:curvature}
    \kappa_e(r) = 2 \left( \frac{e^{r_{xy}}}{m(x)} + 
\frac{e^{r_{xy}}}{m(y)} \right) - 2,
\end{equation}
where $m(x) = \sum_{u\sim x} e^{r_{xu}}$.
 Obviously, $\kappa$ is translation invariant with respect to $r$, namely, for any $c\in \mathbb{R},$
\[\kappa(r)=\kappa(r+c).\]
Conversely, when $V$ is finite, if $\kappa(\tilde{r}) = \kappa(r)$, then there exists a constant $c$ such that $\tilde{r} = r + c$. This can be viewed  as the rigidity of  $\kappa$, which is  established in Proposition 3.2 of \cite{lin2026ricci}. On infinite graphs with girth at least 6, the following example demonstrates that  this rigidity is not always true.   
\begin{example}
Let $P=(V,E)$ be the infinite path with $V= \{v_i\}_{i \in \mathbb{Z}}$ and  $E = \{e_i\}_{i \in \mathbb{Z}}$ with $e_i=(v_{i},v_{i+1})$. Lin-Lu-Yau curvature on $P$ satisfies
$$\frac{\kappa_i(r) + 2}{2} = \frac{1}{1 + e^{r_{i-1}-r_i}} + \frac{1}{1 + e^{r_{i+1}-r_i}}.$$
Let $r_i=0$ and $ \Tilde{r}_i =  i$ for any $e_i\in E$,  we have $\kappa(r)=\kappa(\Tilde{r})=0$.  
\end{example}

In the following proposition, the rigidity of Lin-Lu-Yau curvature is established when the solutions are restricted in  $\ell^p(E)$ with $p\in[1,\infty)$.

\begin{proposition}\label{pro-curvature}
Let $G=(V,E)$ be a graph  with girth at least $6$. For any two $r,\tilde{r}\in \ell^p(E)$ with $p\in[1,\infty)$ satisfying $\kappa(r)=\kappa(\tilde{r})$,  we have $r_i=\tilde{r}_i$
for any $e_i\in E$.
\end{proposition}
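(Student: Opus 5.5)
The plan is a maximum-principle argument applied to the difference $u:=\tilde r-r$, in the spirit of the girth-$6$ formula \eqref{def:curvature}. I assume, as throughout the paper, that $G$ is connected and infinite. Since $r,\tilde r\in\ell^p(E)$ with $p<\infty$, we have $u\in\ell^p(E)$. Hence for every $\varepsilon>0$ only finitely many edges satisfy $|u_e|\ge\varepsilon$, so $u_e\to 0$ along any enumeration of $E$. Consequently, if $M:=\sup_E u>0$, the supremum is attained on a nonempty set $E_M:=\{e\in E: u_e=M\}$, which is finite. The symmetric statement holds for the infimum if $\inf_E u<0$. It therefore suffices to show that $M>0$ is impossible; the case $\inf u<0$ follows by exchanging $r$ and $\tilde r$.

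The key step is a comparison at a maximum edge. First rewrite \eqref{def:curvature} for $e=(x,y)$ as
\begin{equation*}
\frac{\kappa_e(r)+2}{2}=\Big(\sum_{v\sim x}e^{r_{xv}-r_e}\Big)^{-1}+\Big(\sum_{v\sim y}e^{r_{yv}-r_e}\Big)^{-1}.
\end{equation*}
Now fix $e_0=(x,y)\in E_M$. For every edge $xv$ we have
\begin{equation*}
\tilde r_{xv}-\tilde r_{e_0}=(r_{xv}-r_{e_0})+(u_{xv}-u_{e_0})\le r_{xv}-r_{e_0},
\end{equation*}
and the analogous inequality holds at $y$. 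Each exponential term for $\tilde r$ is therefore at most the corresponding term for $r$. Since every term is positive, each reciprocal sum for $\tilde r$ is at least the one for $r$, which gives $\kappa_{e_0}(\tilde r)\ge\kappa_{e_0}(r)$.

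The hypothesis $\kappa(\tilde r)=\kappa(r)$ forces equality in both sums. Because $t\mapsto e^t$ is strictly increasing, equality means $u_{xv}=M$ for every $v\sim x$ and $u_{yv}=M$ for every $v\sim y$, that is, $N(e_0)\subset E_M$. So $E_M$ is closed under passing to adjacent edges (edges sharing an endpoint). Since $G$ is connected, any two edges are joined by a chain of adjacent edges, and hence $E_M=E$. This makes $E_M$ infinite, contradicting its finiteness (equivalently, $u\equiv M\neq 0$ is not in $\ell^p(E)$ for infinite $E$). Therefore $M\le 0$, and by symmetry $\inf u\ge 0$, so $u\equiv 0$, i.e.\ $r_i=\tilde r_i$ for every $e_i\in E$.

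The main point needing care is the first step: the attainment of the supremum. This is exactly where $p<\infty$ enters. For $p=\infty$ the supremum need not be attained, and the path example with $\tilde r_i=i$ shows that rigidity then genuinely fails. The remaining steps use only monotonicity in \eqref{def:curvature}; vertices of degree one cause no trouble, since their sum reduces to the single term $1$. If disconnected graphs were allowed, the same argument would apply to each infinite component. A finite component, however, only yields $u$ constant there by the finite rigidity of Proposition 3.2 in \cite{lin2026ricci}, so connectedness (or the absence of finite components) should be stated as part of the standing assumptions.
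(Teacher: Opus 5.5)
Your proof is correct and follows essentially the same route as the paper: a maximum principle for $u=\tilde r-r$ at an edge where a positive maximum is attained, monotonicity of the girth-$6$ curvature formula forcing $u=M$ on all adjacent edges, and connectedness giving $u\equiv M\notin\ell^p(E)$, with the minimum handled symmetrically. Your explicit justification that the supremum is attained (via $u_e\to 0$) and your remark that connectedness must be a standing assumption clarify points the paper leaves implicit.
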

\begin{proof}
Define $u_i = \tilde{r}_i - r_i$ for all $e_i \in E$. Since $r, \tilde{r} \in \ell^p(E)$, then $u \in \ell^p(E)$.
Consequently, if $u$ is not identically zero, it must attain either a strictly positive global maximum $M>0$ or a strictly negative global minimum  $m<0$ at some edges.

Suppose that $M > 0$. Let $e_i = (x,y) \in E$ be the edge where $M$ is attained, i.e., $u_i = M$. For any  $e_j \in N_i$, we have $u_j \le u_i$. Therefore,
$$ \tilde{r}_j -  \tilde{r}_i = (r_j + u_j) - (r_i + u_i) = (r_j - r_i) + (u_j - u_i)\le r_j - r_i,$$
which yields
$$e^{\tilde{r}_j -  \tilde{r}_i} \le e^{r_j - r_i}, \quad \forall j \in  N_i.$$
Summing over  $E_x$ to get
$$\frac{1}{\sum_{e_j\in E_x}e^{\tilde{r}_j -  \tilde{r}_i}} \ge \frac{1}{\sum_{e_j\in E_x}e^{r_j-r_i}}.$$
Applying the similar argument for $E_y$, the definition of  $\kappa$ (see \eqref{def:curvature}) gives
$$\kappa_i(\tilde{r}) \ge \kappa_i(r).$$
By hypothesis, $\kappa(\tilde{r}) = \kappa(r)$. This strict equality forces all above inequalities to be exact equalities. Thus, for each $e_j \in N_i$, we must have $e^{u_j - u_i} = 1$, which strictly implies $u_j = u_i = M$. Since the graph is connected, we have
$$u_k \equiv M > 0, \quad \forall e_k \in E.$$
It follows that $\|u\|_{\ell^p}= \infty.$
This contradicts the fact that $u \in \ell^p(E)$. Thus, the assumption $M > 0$ is false, proving that $M \le 0$.

A completely symmetric argument applies to the minimum $m$, giving that $m \ge 0$. Therefore, we obtain $u\equiv 0$. This enforces $r_k = \tilde{r}_k$ for all $e_k \in E$.
\end{proof}

In $\ell^\infty(E)$, the rigidity of solutions does not necessarily hold, as follows.
\begin{proposition}\label{pro:T_3}
    The constant curvature weight restricted on $\ell^\infty(E)$ on an infinite 3-regular tree $T_3$ is not unique, thereby breaking the $\ell^\infty$-rigidity. 
\end{proposition}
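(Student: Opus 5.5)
We prove the statement by exhibiting a bounded, non-constant weight $\tilde r\in\ell^\infty(E)$ on $T_3$ with $\kappa(\tilde r)\equiv\kappa(0)=-\tfrac23$. First, for $r=0$, formula \eqref{def:curvature} gives $\kappa_e=2(\tfrac13+\tfrac13)-2=-\tfrac23$ on every edge. So we must solve, at every edge $e=(x,y)$,
\[
\frac{e^{\tilde r_{xy}}}{m(x)}+\frac{e^{\tilde r_{xy}}}{m(y)}=\frac23 ,
\]
with $\tilde r$ bounded and not constant.

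The construction exploits the homogeneous structure of $T_3$. Properly $3$-colour the edges of $T_3$ with colours $1,2,3$; this is possible since every vertex has degree $3$ and the graph is a tree. Then assign weights $e^{\tilde r}=a,b,c$ on colours $1,2,3$. Every vertex sees exactly one edge of each colour, so $m(x)=a+b+c=:S$ for all $x$. The curvature of an edge of colour $1$ is then $2\cdot 2a/S-2$, and the equation $4a/S-2=-\tfrac23$ forces $a=S/3$. Likewise $b=c=S/3$, so this ansatz only returns constants.

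The ansatz must therefore break the vertex homogeneity. Instead, use a bipartition-type pattern. Each edge has endpoints with possibly different sums $m(x)\neq m(y)$. Write $p_e=e^{\tilde r_e}$; the condition reads $p_e\bigl(\tfrac1{m(x)}+\tfrac1{m(y)}\bigr)=\tfrac23$.

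Try two vertex types $A$ and $B$ with sums $m_A$ and $m_B$:
\begin{itemize}
\item[(i)] edges of type $AA$ have weight $p=\tfrac13 m_A$;
\item[(ii)] edges of type $BB$ have weight $q=\tfrac13 m_B$;
\item[(iii)] edges of type $AB$ have weight $s=\tfrac23\,\frac{m_Am_B}{m_A+m_B}$.
\end{itemize}
A type-$A$ vertex with $k$ edges to $A$ and $3-k$ edges to $B$ needs $m_A=kp+(3-k)s$. Taking $k=1$ gives $\tfrac23 m_A=2s$, i.e. $m_A=3s$. A type-$B$ vertex with, say, $j$ edges to $B$ and $3-j$ to $A$ similarly gives $m_B=3s$ when $j=1$, and then everything is constant again.

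Instead choose $j=0$ for $B$: then $m_B=3s$. Combined with $m_A=3s$, this still forces $m_A=m_B$ and constancy. So finitely-typed periodic patterns seem to be rigid, and the real solution should be a \emph{level-dependent} but bounded weight.

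Here is the main construction. Fix an end of $T_3$ (a horofunction $h$). Every vertex has one "parent" edge (going up) and two "child" edges. Look for $\tilde r$ depending only on the level $n=h$ of the edge, $p_n=e^{\tilde r}$. A vertex at level $n$ then has $m=p_{n}+2p_{n-1}$. The edge equation yields a second-order recurrence
\[
p_n\Bigl(\frac1{p_n+2p_{n-1}}+\frac1{p_{n+1}+2p_n}\Bigr)=\frac23 .
\]
Setting $t_n=p_{n}/p_{n-1}$, this becomes a first-order map $t_{n+1}=F(t_n)$, with fixed point $t=1$ corresponding to $r=0$. One checks by computing $F'(1)$ that $t=1$ is a hyperbolic fixed point. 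Another fixed point $t^*\neq1$, or a bounded orbit with $\prod t_n$ bounded, then gives a bounded non-constant $\tilde r$.

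The main obstacle is ensuring boundedness of $\tilde r=\sum\log t_n$. A constant $t^*\ne1$ gives linear growth (like the path example) and fails. So I would look for a heteroclinic orbit from $1$ to $1$, or a $2$-periodic orbit $t,1/t$, for which $\tilde r$ alternates between two values. The $2$-periodic case reduces to two algebraic equations in $(p_{\text{even}},p_{\text{odd}})$, which I would solve explicitly. If no solution appears, I would fall back on a homoclinic orbit via the linearization at $t=1$: the stable and unstable directions must carry summable deviations of $\log t_n$ so that $\tilde r$ stays bounded.
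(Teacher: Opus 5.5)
\textbf{There is a genuine gap: the construction you settle on cannot succeed.}

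Your proposal never actually exhibits $\tilde r$, and the horofunction-level ansatz provably yields only the constant solution. With $t_n=p_n/p_{n-1}$, your edge equation reads $\frac{t_n}{t_n+2}+\frac{1}{t_{n+1}+2}=\frac23$. This gives explicitly
\[
t_{n+1}=F(t_n),\qquad F(t)=\frac{5t-2}{4-t},
\]
a M\"obius map with fixed points $1$ and $-2$ and $F'(1)=2$. Put $\phi(t)=\frac{t-1}{t+2}$. Since $F(t)-1=\frac{6(t-1)}{4-t}$ and $F(t)+2=\frac{3(t+2)}{4-t}$, one gets $\phi(F(t))=2\phi(t)$, hence $\phi(t_{n_0+k})=2^k\phi(t_{n_0})$.

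Positivity of the weights forces $\phi(t_n)=1-\frac{3}{t_n+2}\in(-\frac12,1)$ for every $n$. With a horofunction, the levels run over all of $\mathbb{Z}$, in particular $n\to+\infty$ toward the end. So $\phi(t_n)\equiv0$, i.e.\ $t_n\equiv1$: bounded or not, the level-dependent ansatz gives only constants.

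Your fallbacks fail for the same reason. A hyperbolic M\"obius map has no periodic points other than its fixed points, so there is no $2$-cycle $(t,1/t)$. And $t=1$ is a repelling fixed point of a one-dimensional map, with no stable direction in forward time, so no homoclinic orbit exists.

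\textbf{The missing idea.} You need a \emph{root}, so that the recursion runs only in the branching direction, where it contracts. In your variables this is $F^{-1}$, with multiplier $\frac12$ at $1$. The paper cuts $T_3$ at an edge $(A,B)$ and lets the data depend only on the distance to $A$ (resp.\ $B$) inside the two binary branches.

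It is also cleaner to work with vertex masses. The condition $\kappa\equiv-\frac23$ is equivalent to $\sum_{y\sim x}\frac{m(y)}{m(x)+m(y)}=\frac32$ at every vertex, and the weights are recovered via $\tilde r_{xy}=\ln\bigl(\frac23\frac{m(x)m(y)}{m(x)+m(y)}\bigr)$. The paper takes $a_0=1$, $b_0=3$, so the root equations give $y_1=\frac58$ and $z_1=\frac38$. With $y_k=\frac{a_{k-1}}{a_k+a_{k-1}}$, the interior equation becomes the linear recursion $y_{k+1}=\frac12y_k+\frac14$, so $y_k-\frac12=2^{-(k+2)}$.

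Hence $a_k=\prod_{i\le k}\frac{1-y_i}{y_i}$ converges to a positive limit, and likewise for $b_k$. The masses therefore lie in a compact subinterval of $(0,\infty)$, so $\tilde r$ is bounded, and it is non-constant because $m$ is.

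The root must also break symmetry. Rooting at a vertex with full radial symmetry forces $m$ to be constant: the root equation $3\frac{m_1}{m_0+m_1}=\frac32$ gives $m_1=m_0$, and inductively $m_{k+1}=m_k$. That is why an edge with $a_0\neq b_0$ is used.
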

\begin{proof}
    Consider $r \equiv 0$ on $T_3$. The curvature corresponding to $r \equiv 0$ on $T_3$ is given by, for any $e\in E$,
    $$\kappa_{e}(0) = 2 \left( \frac{1}{3} + \frac{1}{3} \right) - 2 = -\frac{2}{3}.$$
    Next, we construct a non-constant $\tilde{r} \in \ell^\infty(E)$ such that $\kappa_{xy}(\tilde{r}) = -2/3$ for all edges $(x,y)$. Recall the definition of Lin-Lu-Yau curvature (see \eqref{def:curvature}), it follows that 
    \begin{equation}\label{eq:mass}
        \sum_{y \sim x} \frac{m(y)}{m(x) + m(y)} = \frac{3}{2}, \quad \forall x \in V,
    \end{equation}
   where $m(x) = \sum_{y \sim x} e^{\tilde{r}_{xy}}$.
   
    We split $T_3$ into two binary branches, $T_A$ and $T_B$, by removing an edge $(A,B)$. Let $a_k$ (resp. $b_k$) denote the mass of a vertex at distance $k$ from $A$ in $T_A$ (resp. $B$ in $T_B$).  Let $a_0 = 1$ and $b_0 = 3$. Let $y_k = \frac{a_{k-1}}{a_k + a_{k-1}}$ and $z_k = \frac{b_{k-1}}{b_k + b_{k-1}}$. Solving \eqref{eq:mass} at nodes $A$ and $B$ yields $y_1 = \frac{5}{8}$ and  $z_1 = \frac{3}{8}$. For internal vertices in $T_A$ ($k \ge 1$), the equation \eqref{eq:mass} becomes
    $$\frac{a_{k-1}}{a_k + a_{k-1}} + 2 \frac{a_{k+1}}{a_k + a_{k+1}} = \frac{3}{2}.$$
 Then the above equation reduces to 
    $$y_{k+1} = \frac{1}{2}y_k + \frac{1}{4},$$
    which implies that $y_k = \frac{1}{2} +\left(\frac{1}{2}\right)^{k+2}.$
    Similarly, for $T_B$, we have $z_k = \frac{1}{2} -\left(\frac{1}{2}\right)^{k+2}$.

    In $T_A$, the ratio \[g_k = \frac{a_k}{a_{k-1}} = \frac{1-y_k}{y_k} = \frac{1 - (1/2)^{k+1}}{1 + (1/2)^{k+1}} < 1.\] 
It follows that  $a_k=\prod_{i=1}^k g_i$ is decreasing and converges to a finite $a_\infty\in (0,1)$.
 Similarly, in $T_B$, $b_k$ is increasing and converges to a finite $b_\infty > 3$. Thus, $ m(x)\in [a_\infty,b_\infty]$ for any $x\in V$. Consequently, $$\tilde{r}_{xy} = \ln \left( \frac{2}{3} \frac{m(x)m(y)}{m(x) + m(y)} \right)\in \ell^\infty(E),$$
and $\kappa_{xy}(\tilde{r}) =\kappa_{xy}(0)$ for any $(x,y)\in E$.
\end{proof}

For an edge $e_i$ with endpoints $x$ and $y$, we write $j\sim i$ (by a slight abuse of notation) to mean that edges $e_i$ and $e_j$ intersect, i.e., they share at least one of the vertices $x$ or $y$.  
In this paper, we also need to define the Laplace operator on the edge function $C(E)$, as follows
\[\Delta_\mu f(e_i)=\sum_{j\sim i}\mu_{ij}(f(e_j)-f(e_i)), \quad\forall f\in C(E),\] 
where $\mu\in C(E)$ is positive on any pair of $j\sim i$ and $\mu_{ij}=\mu_{ji}$. We say it as the $\mu$-Laplacian on $C(E)$.

Next, we establish a sufficient condition of the $\ell^\infty$-rigidity on graphs with girth at least $6$. By \eqref{def:curvature},  for any $e_i\in E$,
  \[
    \frac{\partial \kappa_{i}}{\partial r_{i}} (r)= 2\frac{e^{r_i}}{m(x)}\left(1-\frac{e^{r_i}}{m(x)}\right)+2\frac{e^{r_i}}{m(y)}\left(1-\frac{e^{r_i}}{m(y)}\right),
\]
where \( E_u \) denotes the set of edges incident to vertex \( u \). Moreover, 
 for any $j\sim i$ 
\[\frac{\partial \kappa_{i}}{\partial r_j} (r)=\frac{\partial \kappa_{j}}{\partial r_i} (r)=-2\frac{e^{r_i+r_j}}{m(x)^2}~(\mbox{or}~-2\frac{e^{r_i+r_j}}{m(y)^2}),\]
and equals to $0$ if  $e_j$ is not incident to $e_i$.  
Moreover, 
\begin{equation}\label{eq:par_kappa}
    \frac{\partial \kappa_{i}}{\partial r_{i}} (r)= -\sum_{j\sim i}\frac{\partial \kappa_{i}}{\partial r_{j}} (r).
\end{equation}
 For $r\neq \Tilde{r}$, let  $u^s:=sr + (1 - s)\Tilde{r}$ for $s\in[0,1]$. Based on \eqref{eq:par_kappa} and Newton-Leibniz formula, we have
\begin{align}\label{eq:laplace}
\kappa_i(r) - \kappa_i(\Tilde{r})&= \int_0^1 \frac{d}{ds} \kappa_i(u^s) \, ds \notag\\
&= \sum_{j \sim i} \int_0^1 \frac{\partial \kappa_i}{\partial u^s_j} (u^s) ds \cdot [r_j - \Tilde{r}_j] +\int_0^1 \frac{\partial \kappa_i}{\partial u^s_i} (u^s) ds \cdot [r_i - \Tilde{r}_i] \notag\\
&= \sum_{j \sim i} \int_0^1 \frac{\partial \kappa_i}{\partial u^s_j} (u^s) ds \cdot [(r_j- \Tilde{r}_j) - (r_i - \Tilde{r}_i)] \notag\\
&= \Delta_{\mu}(r_i - \Tilde{r}_i),
\end{align}
where for any $j\sim i$,
\begin{equation}\label{mu}
\mu_{ij} =\mu_{ij} (r,\Tilde{r})= -\int_{0}^{1} \frac{\partial \kappa_i}{\partial u^s_j}(u^s)ds\quad (> 0).
\end{equation}

Consider the line graph $L(G) = (V_L, E_L)$ of $G=(V,E)$, where $V_L=E$ and two vertices in $L(G)$ are adjacent if the corresponding edges in $G$ share a vertex. The edge weight of $L(G)$ is endowed with  $\mu$ defined as \eqref{mu}. A function $f$ on $V_L$ is called a discrete weighted harmonic (superharmonic resp.) function on the graph $L(G)$ if and only if
$$
L_\mu f(v_i):=\sum_{v_j\sim v_i}\mu_{ij}(f(v_j)-f(v_i))= 0\ (\leq 0\ \text{resp.}),\quad \forall v_i\in V_L.
$$
 We say that $L(G)$ satisfies the Liouville property for bounded harmonic functions if for every $f \in \ell^\infty(V_L)$ with $L_\mu f \equiv 0$ on $V_L$, there exists $c \in \mathbb{R}$ such that $f \equiv c$ on $V_L$.  Equivalently, $L(G)$ satisfies the Liouville property if and only if every bounded function $f$ on $E$ satisfying $\Delta_\mu f \equiv 0$ is constant. 
\begin{theorem}\label{pro:harmonic}
Let $G=(V,E,\omega)$ be a graph with girth at least $6$. For any $r,\Tilde{r}\in \ell^\infty(E)$ such that $\kappa(\Tilde{r}) = \kappa(r)$,  if the line graph $L(G) = (V_L, E_L)$ with the edge wight $\mu$ defined as \eqref{mu} satisfying the discrete Liouville property for bounded harmonic functions, then $ \Tilde{r}= r + c$ for some constant $c$.
\end{theorem}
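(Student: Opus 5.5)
The proof will rest on the identity \eqref{eq:laplace}. Set $u:=\tilde r - r$. Since $r,\tilde r\in \ell^\infty(E)$, we have $u\in\ell^\infty(E)$. Apply \eqref{eq:laplace} with the roles of $r$ and $\tilde r$ as written, i.e. $\kappa_i(r)-\kappa_i(\tilde r)=\Delta_\mu(r_i-\tilde r_i)$, where $\mu_{ij}=\mu_{ij}(r,\tilde r)$ is given by \eqref{mu}. The hypothesis $\kappa(r)=\kappa(\tilde r)$ then gives $\Delta_\mu u\equiv 0$ on $E$. In the language of the line graph, this says that $u$, viewed as a function on $V_L=E$, satisfies $L_\mu u\equiv 0$. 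So $u$ is a bounded $\mu$-harmonic function on $L(G)$.

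Before invoking the Liouville property, I will check that $\mu$ is an admissible weight on $L(G)$, meaning it is positive and symmetric on adjacent pairs. Positivity comes from the explicit formula $\partial\kappa_i/\partial r_j=-2e^{r_i+r_j}/m(x)^2<0$ for $j\sim i$ sharing the vertex $x$, integrated along the segment $u^s$. Here I use girth at least $6$: two distinct edges share at most one vertex, so the formula is unambiguous. Symmetry $\mu_{ij}=\mu_{ji}$ follows from $\partial\kappa_i/\partial r_j=\partial\kappa_j/\partial r_i$ pointwise in $s$. I will also verify that the sum over $j\sim i$ in \eqref{eq:laplace} is finite, which holds by local finiteness, so that $\Delta_\mu u$ is well defined.

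Now the assumed discrete Liouville property for bounded harmonic functions on $(L(G),\mu)$ applies to $u$. It gives $u\equiv c$ for some constant $c\in\mathbb{R}$, that is, $\tilde r=r+c$, which is the claim. As a consistency check, the translation invariance $\kappa(r+c)=\kappa(r)$ shows that such constants do occur.

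The only delicate point I anticipate is the justification of \eqref{eq:laplace} itself: differentiating under the integral and using \eqref{eq:par_kappa} to convert the diagonal term into the Laplacian form. Both steps are local, since $\kappa_i$ depends only on the finitely many coordinates in $N(e_i)$, so the chain rule is just finite-dimensional calculus. Note that the Liouville property is an assumption on the specific weight $\mu(r,\tilde r)$, which depends on $r$ and $\tilde r$. The argument therefore does not need any uniform ellipticity of $\mu$; it simply applies the hypothesis with this particular weight.
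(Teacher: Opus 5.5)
Your proposal is correct and follows the paper's proof: use \eqref{eq:laplace} to turn $\kappa(r)=\kappa(\tilde r)$ into $\Delta_\mu(\tilde r - r)\equiv 0$, note that $\tilde r - r$ is bounded, and apply the assumed Liouville property for this particular weight $\mu=\mu(r,\tilde r)$. Two minor remarks: the girth-$6$ hypothesis is what gives the closed-form curvature \eqref{def:curvature} and hence the formula for $\partial\kappa_i/\partial r_j$ (two distinct edges share at most one vertex in any simple graph, so girth is not needed for that), and the overall sign in \eqref{eq:laplace} is irrelevant here because the right-hand side is set to zero.
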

\begin{proof}
For any $r,\Tilde{r}\in \ell^\infty(E)$, from \eqref{eq:laplace}, the condition $\kappa_i(r)=\kappa_i(\Tilde{r})$ is equivalent to 
$\Delta_{\mu} (r_i- \Tilde{r}_i)=0$ with $\mu$ defined as \eqref{mu}. 
By choosing the harmonic function $f=r- \Tilde{r}$, then $ \Tilde{r}= r + c$ for some constant $c$. 
\end{proof}

 The next Corollary is a direct conclusion of Proposition \ref{pro:harmonic}. 
\begin{corollary}\label{co:hexagonal}
    The constant curvature weight restricted on $\ell^\infty(E)$ on the hexagonal lattice $H$ is  unique up to scaling. 
\end{corollary}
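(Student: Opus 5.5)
We will derive the corollary from Theorem \ref{pro:harmonic}. Let $r,\tilde r\in\ell^\infty(E)$ on the hexagonal lattice $H$ with $\kappa(r)=\kappa(\tilde r)$. Since $H$ has girth $6$, formula \eqref{def:curvature} holds and we get the $\mu$-Laplacian identity \eqref{eq:laplace}, with $\mu=\mu(r,\tilde r)$ as in \eqref{mu}. It then suffices to show that the line graph $L(H)$, weighted by this $\mu$, has the Liouville property for bounded harmonic functions. Once that is done, $\tilde r=r+c$, which is exactly uniqueness up to scaling $\omega\mapsto e^{c}\omega$.

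\textbf{Step 1: the weights are uniformly elliptic.} Put $K=\max\{\|r\|_\infty,\|\tilde r\|_\infty\}$. Each $u^s=sr+(1-s)\tilde r$ also satisfies $\|u^s\|_\infty\le K$. Every vertex of $H$ has degree $3$, so $m(x)\in[3e^{-K},3e^{K}]$ along the whole segment. For $j\sim i$ sharing the vertex $x$ we have
\[
-\frac{\partial\kappa_i}{\partial u_j}(u^s)=\frac{2e^{u^s_i+u^s_j}}{m(x)^2}\in\Big[\tfrac{2}{9}e^{-4K},\tfrac{2}{9}e^{4K}\Big],
\]
and integrating in $s$ gives $\lambda\le\mu_{ij}\le\lambda^{-1}$ with $\lambda=\tfrac29 e^{-4K}$. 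Moreover $\mu_{ij}=\mu_{ji}$: for a tree-like girth-$\ge6$ graph the mixed partials are symmetric, as noted before \eqref{eq:par_kappa}. So $L_\mu$ is a symmetric, uniformly elliptic weighted Laplacian on $L(H)$.

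\textbf{Step 2: geometry of $L(H)$.} $L(H)$ is the kagome lattice. It is a $4$-regular, vertex-transitive graph, quasi-isometric to $\mathbb Z^2$, with quadratic volume growth $|B_R|\asymp R^2$. It satisfies the volume doubling property and a scale-invariant Poincaré inequality; both hold for the unweighted graph, since it is a Cayley-like periodic graph of $\mathbb Z^2$. These two properties are stable under uniformly elliptic changes of weights.

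\textbf{Step 3: Liouville.} By Delmotte's theorem, volume doubling together with the Poincaré inequality yields an elliptic Harnack inequality for $L_\mu$-harmonic functions, and the resulting Hölder estimates imply the Liouville property for bounded harmonic functions. We can avoid that machinery with a more elementary route. A periodic $2$-dimensional graph with bounded weights is recurrent, because effective resistance to infinity diverges like $\log R$ (Nash-Williams, using the quadratic growth of annuli); this is stable under bounded weights. On a recurrent graph, bounded harmonic functions are constant: if $f$ is bounded and harmonic, $f(X_n)$ is a bounded martingale for the $\mu$-random walk and converges a.s., while recurrence forces the walk to visit every vertex infinitely often, so $f$ must be constant. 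We will use the recurrence argument. Applying Theorem \ref{pro:harmonic} to $f=r-\tilde r$ then finishes the proof.

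The main obstacle is Step 3: establishing recurrence (or Harnack) uniformly for the non-periodic weights $\mu$. The approach is to verify the Nash-Williams bound with cutsets given by the annuli around a fixed edge. In $L(H)$ the number of edges between consecutive layers is $O(n)$, and each has conductance at most $\lambda^{-1}$. Hence $\sum_n (C n)^{-1}=\infty$, which gives recurrence independently of any periodicity of $\mu$.
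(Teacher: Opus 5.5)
Your proposal is correct and follows the paper's route: bound the weights $\mu_{ij}$ uniformly, deduce recurrence of the $\mu$-weighted walk on the kagome lattice $L(H)$ from its two-dimensional growth, conclude that bounded $L_\mu$-harmonic functions are constant, and apply Theorem \ref{pro:harmonic} to $r-\tilde r$. The differences are only in the supporting lemmas: you get recurrence from Nash-Williams with annular cutsets and the Liouville property from a martingale argument, whereas the paper cites the volume-growth criterion (Grigor'yan, Thm.~6.13) and Woess (Thm.~1.16); your explicit lower bound on $\mu$ and the doubling/Poincar\'e remarks are not needed, since positivity of $\mu$ and an upper bound suffice.
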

\begin{proof}
  For any $r,\Tilde{r}\in \ell^\infty(E)$, notice that $\mu_{ij}$ defined as \eqref{mu} has uniform bounds; namely, $\mu_{ij}\in(0,2]$. Consider the line graph $L(H)$ of the hexagonal lattice $H$, which is 4-regular Kagome lattice. Fix $x_0\in V_L$, denote the ball $B_r=\{x\in V_L: d(x,x_0)\leq r\}$ with $r\ge 1$, we have 
  \[\mu(B_r)=\sum_{x\in B_r}\sum_{v_j\sim v_i}\mu_{ij}\le 8|B_r|\leq Cr^2.\]
By Theorem 6.13 in \cite{grigor2018introduction}, the simple random walk is recurrent on $L(H)$. Due to Theorem 1.16 in \cite{woess2000random}, this yields that any positive (super)harmonic function on $L(H)$ corresponding to $\mu$ is constant. Combing this with  Proposition \ref{pro:harmonic}, we obtain there exists a constant $c$ such that $ \Tilde{r}= r + c$ if $\kappa_i(r)=\kappa_i(\Tilde{r})$ for any $e_i\in E$.
\end{proof}

\section{The existence and uniqueness of solution to the Ricci flow \eqref{flow-equation3}}\label{section2}
Analogous to the finite case, Lin–Lu–Yau curvature on infinite graphs, which is locally defined (see \eqref{ollivier}), satisfies the following Lipschitz continuity.  Set $N_i=\{e_j\in E: j\sim i\}$ and $\overline{N}_i=N_i\cup\{e_i\}$.   For simplicity, we denote $f_i=f(e_i)$ for any $f\in C(E)$.

\begin{proposition}\label{lip_kappa}
For any $e_i\in E$, $\kappa_i( \omega)$ is Lipschitz with respect to $ \omega$ on $\overline{N}_i$, i.e., for any two $ \omega\neq \Tilde{ \omega}$,
 \begin{equation*}\label{eq:kappa_lip}
    |\kappa_i( \omega)-\kappa_i(\Tilde{ \omega})|\leq L\| \omega-\Tilde{\omega}\|_\infty^i,
\end{equation*}
where $\| \omega-\Tilde{\omega}\|_\infty^i=\sup_{e_j\in \overline{N}_i}|\omega_j-\Tilde{\omega_j}|$. Let $e_i=(x,y)$ and $D_i=\max(d_x,d_y)$.
The Lipschitz constant $L$ depends on $\omega_i, \sup_{e_j\in\overline{N}_i}\omega_j$ and $D_i$.
\end{proposition}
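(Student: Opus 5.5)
We will prove the Lipschitz estimate from the limit-free representation \eqref{ollivier}, $\kappa_i(\omega)=\inf_{f\in\mathcal F}\nabla_{xy}\Delta_\omega f$ with $e_i=(x,y)$. The idea is that an infimum of a family of functions, each Lipschitz in $\omega$ with one common constant, is itself Lipschitz with that constant. So it suffices to bound $|\nabla_{xy}\Delta_\omega f-\nabla_{xy}\Delta_{\tilde\omega}f|$ uniformly over $f\in\mathcal F$.

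\emph{Step 1: locality.} Since $d(x,y)=1$, we have $\nabla_{xy}\Delta_\omega f=\Delta_\omega f(x)-\Delta_\omega f(y)$. This expression involves only the weights $\omega_{xz}$ and $\omega_{yz}$, that is, only edges of $\overline N_i$. Because $\mathcal F$ is invariant under adding constants, we may normalize $f(x)=0$. The $1$-Lipschitz condition then gives $|f(z)-f(x)|\le 1$ for $z\sim x$ and $|f(z)-f(y)|\le1$ for $z\sim y$. Hence every difference appearing in $\Delta f(x)$ or $\Delta f(y)$ is bounded by $1$.

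\emph{Step 2: difference of Laplacians.} For fixed $f$, write $\Delta_\omega f(x)=\sum_{z\sim x}p_{xz}(f(z)-f(x))$ with $p_{xz}=\omega_{xz}/m(x)$. Then
\[
|\Delta_\omega f(x)-\Delta_{\tilde\omega}f(x)|\le\sum_{z\sim x}|p_{xz}-\tilde p_{xz}|.
\]
Next we estimate each term:
\[
|p_{xz}-\tilde p_{xz}|\le\frac{|\omega_{xz}-\tilde\omega_{xz}|}{m(x)}+\tilde\omega_{xz}\frac{|m(x)-\tilde m(x)|}{m(x)\tilde m(x)}\le\frac{2}{m(x)}\sum_{w\sim x}|\omega_{xw}-\tilde\omega_{xw}|,
\]
using $\tilde\omega_{xz}\le\tilde m(x)$. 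Summing over $z$ and using $m(x)\ge\omega_i$ gives
\[
|\Delta_\omega f(x)-\Delta_{\tilde\omega}f(x)|\le \frac{2d_x^2}{\omega_i}\|\omega-\tilde\omega\|^i_\infty .
\]
The same bound holds at $y$. Therefore $|\kappa_i(\omega)-\kappa_i(\tilde\omega)|\le 4D_i^2\omega_i^{-1}\|\omega-\tilde\omega\|_\infty^i$.

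\emph{Step 3: infimum argument.} The bound of Step 2 is uniform in $f$. For any $\varepsilon>0$, pick $f$ with $\nabla\Delta_{\tilde\omega}f\le\kappa_i(\tilde\omega)+\varepsilon$. Then
\[
\kappa_i(\omega)\le\kappa_i(\tilde\omega)+\varepsilon+L\|\omega-\tilde\omega\|^i_\infty .
\]
By symmetry the reverse inequality also holds. The infimum is finite by the Lemma in Section~\ref{section_lly} ($|\kappa|\le2$).

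\emph{Main obstacle.} Step 2 is asymmetric: the bound uses $m(x)\ge\omega_i$ but the mixed term involves $\tilde m(x)$. This is harmless because $\tilde\omega_{xz}/\tilde m(x)\le1$, so $\tilde m$ never has to be bounded from below. If one prefers a constant symmetric in $\omega,\tilde\omega$, one can either restrict to $\|\omega-\tilde\omega\|_\infty^i\le\omega_i/2$ or fall back on the trivial bound $|\kappa_i(\omega)-\kappa_i(\tilde\omega)|\le 4$. In the latter case the Lipschitz constant depends on $\omega_i$, on $\sup_{\overline N_i}\omega_j$ and on $D_i$, as stated.
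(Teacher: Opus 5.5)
Your proof is correct and follows the same route as the paper. Both use the limit-free formula for $\kappa$, bound $|\nabla_{xy}\Delta_\omega f-\nabla_{xy}\Delta_{\tilde\omega}f|$ uniformly over $f\in\mathcal F$ using only that $f$ is $1$-Lipschitz, and pass this bound through the infimum.

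Your version is slightly cleaner in two places:
\begin{itemize}
\item The $\varepsilon$-near-minimizer argument avoids the paper's tacit assumption that the infimum is attained.
\item Your splitting of $\omega_{xz}/m(x)-\tilde\omega_{xz}/\tilde m(x)$, using $\tilde\omega_{xz}/\tilde m(x)\le 1$, needs only $m(x),m(y)\ge\omega_i$. The paper divides by $m(x)\tilde m(x)$ and so implicitly also needs a lower bound on $\tilde m(x)$. You therefore get the constant $4D_i^2/\omega_i$, valid for every positive $\tilde\omega$.
\end{itemize}
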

\begin{proof}
    Let $ \omega\neq \Tilde{ \omega}$.
Let $f$ and $\Tilde{f}$ be the optimal functions that attain the infimum in \eqref{ollivier} for $ \omega$ and $\Tilde{ \omega}$, respectively. Let $\Tilde{\Delta}$ and $\Tilde{\Delta}$ be the Laplace operators associated with $\omega$ and $\Tilde{ \omega}$. Thus, for  $e_i=(x,y)$, 
  \begin{align}\label{eq:kappa_gradient}
      |\kappa_i( \omega)-\kappa_i(\Tilde{ \omega})|
      &=|\nabla_{xy}\Delta f-\nabla_{xy}\Tilde{\Delta} \Tilde{f}|\notag \\
      &\leq \max(|\nabla_{xy}\Delta f-\nabla_{xy}\Tilde{\Delta} f|,|\nabla_{xy}\Delta \Tilde{f}-\nabla_{xy}\Tilde{\Delta} \Tilde{f}|).
  \end{align}
Since 
\[m(x)=\sum_{e_j\in E_x}\omega_{xu}\in [\omega_i,d_x\sup_{e_j\in E_x}\omega_j],\]
 and
\[|\Tilde{m}(x)-m(x)|\leq |\sum_{e_j\in E_x}|\Tilde{\omega}_j-\omega_j|\leq d_x\| \omega-\Tilde{\omega}\|_\infty^i.\]
we obtain, for any $e_j\in E_x,$
\begin{align*}
    |\Tilde{m}(x)\omega_j-m(x)\Tilde{\omega}_j|&\leq m(x)|\omega_j-\Tilde{\omega}_j|+\omega_j|\Tilde{m}(x)-m(x)|\\
    &\leq 2d_x\sup\limits_{e_j\in E_x}\omega_j\cdot \| \omega-\Tilde{\omega}\|_\infty^i,
\end{align*}
Therefore, due to $f\in\operatorname{Lip}(1),$
\begin{align*}|\nabla_{xy}\Delta f-\nabla_{xy}\Tilde{\Delta} f|
&=|\Delta f(x)-\Delta f(y)-\Tilde{\Delta}f(x)+\Tilde{\Delta} f(y)|\\
&\leq\sum_{u\sim x}\left|\frac{\omega_{xu}}{m(x)}-\frac{\Tilde{\omega}_{xu}}{\Tilde{m}(x)}\right||f(u)-f(x)|+\sum_{v\sim y}\left|\frac{\omega_{yv}}{m(y)}-\frac{\Tilde{\omega}_{yv}}{\Tilde{m}(y)}\right||f(v)-f(y)|\\
&\leq\sum_{e_j\in E_x}\frac{|\Tilde{m}(x)\omega_j-m(x)\Tilde{\omega}_j|}{m(x)\Tilde{m}(x)}+\sum_{e_j\in E_y}\frac{|\Tilde{m}(y)\omega_j-m(y)\Tilde{\omega}_j|}{m(y)\Tilde{m}(y)}\\
&\leq L\| \omega-\Tilde{\omega}\|_\infty^i,
\end{align*}
where $L=4D_i^2\omega_i^{-2}\sup_{e_j\in \overline{N}_i}\omega_j$.
Similarly, $|\nabla_{xy}\Delta \Tilde{f}-\nabla_{xy}\Tilde{\Delta}\Tilde{f}|$ admits the same bound. Substitude the bounds into \eqref{eq:kappa_gradient} to
 complete the proof of the lemma.
\end{proof}
We construct an exhaustion of the graph by connected finite subgraphs.  Fix a vertex $x_0 \in V$ and an integer $l >0$. By the local finiteness of $G$, it follows that $G^l = (B^l, E^l, \omega^l)$ is a finite and connected subgraph of $G$, where
$$B^l = \{x \in V : d(x, x_0) < l\},\quad E^l = \{(x,y) \in E : x, y \in B^l\}.$$
The boundaries of $B^l$ and $E^l$ are defined by$$\partial B^l = \{x \in V : d(x, x_0) =l\},\quad \partial E^l = \{(x,y) \in E : x\in B^l , y\in \partial B^l \}.$$
Hence, the sequence $\{E^l\}_{l> 0}$ satisfies
$$E^l\subset E^{l+1}, \quad \bigcup_{l > 0} E^l = E,$$and then $\{G^l\}_{l > 0}$ is called an exhaustion sequence of the graph $G$.

The Ricci flow with prescribed curvature on $E^l\cup \partial E^l$ is defined as follows
\begin{equation}
\begin{cases} 
\displaystyle \frac{\mathrm{d} \omega_i^{l}(t)}{\mathrm{d} t} = -\left(\kappa_i^l-\kappa_i^*\right)\omega^l_i(t), \quad &\forall e_i \in E^l, \ \forall t > 0, \\ 
\omega_i^{l}(t) = \omega_i(0), \quad &\forall (i,t) \in (E^l \times \{0\}) \cup (E^{l+1}\setminus E^l \times (0, \infty)),
\end{cases} \label{flow_exhaustion}
\end{equation}
where $\kappa_i^l=\kappa_i(\omega^l(t))$ is the Lin-Lu-Yau Ricci curvature for $e_i\in E_l$.
Since $E_l$ is finite, the local existence and uniqueness of the flow \eqref{flow_exhaustion} follow from the Picard-Lindelöf theorem on a finite time interval.
And, the long time existence of the flow \eqref{flow_exhaustion} by virtue of the boundedness of $\kappa$ is similar to that of finite graph, see \cite{lin2026ricci}. 

\begin{proof}[Proof of Theorem \ref{main1}]
First, we {\it claim} that for any $e_i \in E$ and any sufficiently large $l$ such that $e_i \in E^{l}$, the function $\frac{d\omega_i^l}{dt}(t)$ is Lipschitz continuous with respect to $t$ on $[0, T]$ for any $T > 0$. The Lipschitz constant is uniform with respect to $l$. 
Integrating the Ricci flow
$$\frac{d}{dt} \ln \omega_i^l(t) = -\kappa_i^l+\kappa_i^*$$
from $0$ to $T$, by the bounds of $\kappa_i^l$ in Lemma \ref{bounds},
 we obtain 
\begin{equation}\label{eq:solu_bound}
    \omega_i(0) e^{-(2+|\kappa_i^*|)T} \leq \omega_i^l(t) \leq \omega_i(0) e^{(2+|\kappa_i^*|)T}
\end{equation}
for any $e_i\in E^{l}$. It follows that on $[0,T]$,
\[m^l(x)=\sum_{e_j\in E_x}\omega^l_{xu}(t)\in \left[\omega_i(0) e^{-(2+|\kappa_i^*|)T} ,d_x\sup_{e_j\in E_x}\omega_j(0) e^{(2+|\kappa_j^*|)T}\right].\]
Notice that 
$\|\omega^l-\Tilde{\omega}^l\|_\infty^i=\|\omega^l-\Tilde{\omega}^l\|_\infty$. From Proposition \ref{lip_kappa}, 
we have
 \begin{equation}\label{eq:kappa_lip}
    |\kappa_i^l( \omega^l)-\kappa_i(\Tilde{ \omega}^l)|\leq L_1\| \omega^l-\Tilde{\omega}^l\|_\infty,
\end{equation}
where $L_1$ is free of $l$, and depends on $\omega_i(0)$, $\sup_{e_j\in \overline{N}_i}\omega_j$,  $\sup_{e_j\in \overline{N}_i}|\kappa_j^*|$, $D_i$ and $T$. By Mean Value Theorem, for any $t_1, t_2 \in [0, T]$, there exists some $\xi$ between $t_1$ and $t_2$ such that
$$|\omega^{l}_i(t_1) - \omega^{l}_i(t_2)| = \left|\frac{d\omega_i^l}{dt}(\xi)\right| \cdot |t_1 - t_2|.$$
Since 
\begin{equation}\label{1_bounds}
    \left|\frac{d\omega_i^l}{dt}(t)\right| = |\kappa^{l}_i-\kappa^*_i| \cdot \omega^l_i(t)\leq (2+|\kappa^*_i|)\omega_i(0) e^{(2+|\kappa_i^*|)T},
\end{equation}
we  obtain
\begin{equation*}\label{eq:sol_lip}
    \|{\omega}^l(t_1) - {\omega}^l(t_2)\|_\infty\leq (2+|\kappa^*_i|)\omega_i(0) e^{(2+|\kappa_i^*|)T} |t_1 - t_2|.
\end{equation*} 
It follows that, by \eqref{eq:kappa_lip}
\begin{equation*}\label{eq_kappa}
    |\kappa_i({\omega}^l(t_1)) - \kappa_i({\omega}^l(t_2))|\leq L_2|t_1 - t_2|,
\end{equation*}
and then
\begin{align}\label{lip_omega}
\left|\frac{d\omega_i^l}{dt}(t_1)-\frac{d\omega_i^l}{dt}(t_2) \right| 
&= |\kappa_i({\omega}^l(t_1))\omega_i^l(t_1) - \kappa_i({\omega}^l(t_2))\omega_i^l(t_2)-\kappa_i^*(\omega_i^l(t_1)-\omega_i^l(t_2))| \notag\\
&\le \left|\kappa_i({\omega}^l(t_1)) - \kappa_i({\omega}^l(t_2))\right|\cdot \omega_i^l(t_1) + \left(|\kappa_i({\omega}^l(t_2))|+|\kappa^*_i|\right) \, \|{\omega}^l(t_1) - {\omega}^l(t_2)\|_\infty \notag\\
&\leq L_3|t_1-t_2|,
\end{align}
where $L_2$ and $L_3$ depend on $\omega_i(0)$,  $\sup_{e_j\in \overline{N}_i}\omega_j$,  $\sup_{e_j\in \overline{N}_i}|\kappa_j^*|$, $D_i$ and $T$. This completes the proof of the claim.

On $[0,T]$,
combining \eqref{eq:solu_bound},\eqref{1_bounds} and \eqref{lip_omega}, and by applying  the Arzel\`{a}-Ascoli theorem and the standard diagonal argument, there exists a 
subsequence $\{\omega_i^{l_k}(t)\}_{k=1}^\infty$ of $\{\omega_i^{l}(t)\}_{l=1}^\infty$ on $[0,T]$ for any $e_i\in E$ such that 
$\omega_i^{l_k}(t)$ converges in $C^1[0, T]$ to some $\omega_i(t)$ as $k\to \infty$ . 
By virtue of \eqref{eq:kappa_lip},  $\kappa_i(\omega^{l_k}(t))$ converges in $C^0[0, T]$ to $\kappa_i(\omega(t))$ as $k\rightarrow\infty$
for any $e_i\in E$.
Therefore, $\omega_i(t)$ for any $e_i\in E$ satisfies the flow \eqref{flow-equation3} on $[0,T]$. 

From the uniform boundedness of $\kappa$ and $\kappa^*$, the solution $\omega(t)$ can be globally extended to $(0, \infty)$. Moreover, from the definition of $\kappa$ (see \eqref{ollivier}), the solution $\omega(t)$ is $C^{1,1}$ with respect to $t$.
In particular, on graphs  with girth at least 6, $\kappa$ (see \eqref{cur_tree}) is smooth with respect to $\omega$. This implies that the solution $\omega_i(t)$ is smooth with respect to $t$ on graphs  with girth at least 6.
\end{proof}
\begin{lemma}[Grönwall's Lemma]\label{lem:Grönwall}
    Assume that $f(t)$ is a non-negative continuous function satisfying $$f(t) \le C + \int_a^t g(s)u(s) \, ds$$where $C$ is a constant and $g(s)$ is a non-negative integrable function. Then, $$f(t) \le C \exp \left( \int_a^t g(s) \, ds \right).$$
   In particular, $C=0$ immediately yields $f(t) \equiv 0$ on $[a, T]$.
\end{lemma}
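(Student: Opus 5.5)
The statement to prove is Grönwall's Lemma. Here $u$ in the integrand is evidently a typo for $f$, and $C\ge 0$ is implicit since $f\ge 0$. I would use the standard integrating-factor argument.

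Set $F(t) := C + \int_a^t g(s)f(s)\,ds$. Since $g$ is integrable and $f$ is continuous, $F$ is absolutely continuous on $[a,T]$. Almost everywhere its derivative is $F'(t)=g(t)f(t)$, and using $g\ge 0$ together with the hypothesis $f\le F$ this gives $F'(t)\le g(t)F(t)$.

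Next, put $G(s)=\int_a^s g$ and multiply by the integrating factor $e^{-G(t)}$. Then $\frac{d}{dt}\bigl(e^{-G(t)}F(t)\bigr) = e^{-G(t)}(F'(t)-g(t)F(t)) \le 0$ a.e. The product of two absolutely continuous functions is absolutely continuous, so integrating this inequality from $a$ to $t$ yields $e^{-G(t)}F(t)\le F(a)=C$. Hence $f(t)\le F(t)\le C\exp\bigl(\int_a^t g\bigr)$.

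For the particular case $C=0$, the bound gives $0\le f(t)\le 0$, so $f\equiv 0$ on $[a,T]$.

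The only delicate point is justifying differentiation when $g$ is merely integrable rather than continuous. This is handled by absolute continuity and the Lebesgue differentiation theorem. Alternatively, one can iterate the inequality Picard-style: $f\le C\sum_k G^k/k!$ holds by induction, since $\int_a^t g(s)G(s)^k/k!\,ds = G(t)^{k+1}/(k+1)!$, which gives the same conclusion.
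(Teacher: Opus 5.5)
The paper gives no proof of this lemma. It quotes it as the classical Grönwall inequality (with the typo $u$ for $f$, which you read correctly) and uses it once, in the proof of Theorem~\ref{main2}, with $C=0$, $g\equiv L_5$ and $f(t)=\|\phi(t)\|_{\ell^\infty}$.

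Your integrating-factor argument is the standard proof and it is correct. You also handle the one real subtlety: $e^{-G}F$ is absolutely continuous, not merely differentiable almost everywhere, so an a.e.\ nonpositive derivative does make it non-increasing.

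Compared with the bare citation, your self-contained route gives a slightly stronger statement. You only use that $gf\in L^1[a,T]$, so a bounded measurable $f$ suffices and the sign of $C$ plays no role. (In any case $C\ge 0$ is forced by evaluating the hypothesis at $t=a$.)

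One small correction to your Picard-style alternative. The induction does not give $f\le C\sum_k G^k/k!$ directly. What it gives is $f(t)\le C\sum_{k=0}^{n}G(t)^k/k!+\bigl(\sup_{[a,T]}f\bigr)G(t)^{n+1}/(n+1)!$, and you need $f$ bounded on $[a,T]$ to send the remainder to zero.
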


\begin{proof}[Proof of Theorem \ref{main2}] For any $T>0$, let $\omega$ and $\Tilde{\omega}$ be two different positive function on $C(E)$. 
 Similar to the proof of  Proposition \ref{lip_kappa}, we have for any $e_i\in E$,
\begin{equation*}
    |\kappa_i( \omega)-\kappa_i(\Tilde{ \omega})|\leq L_1\left(\omega_i, \sup_{e_j\in\overline{N}_i}\omega_j, D_i\right)\| \omega-\Tilde{\omega}\|_\infty^i.
\end{equation*}
Let $ f_i(\omega):=-(\kappa_i( \omega)-\kappa^*_i)\omega_i$.
It follows that
\[\begin{aligned}
|f_i(\omega)-f_i(\Tilde{ \omega})| &\le \left|\kappa_i({\omega}) - \kappa_i({\Tilde{\omega}})\right|\cdot \omega_i + \left(|\kappa_i({\Tilde{\omega}})|+|\kappa^*_i|\right) \, |{\omega}_i - {\Tilde{\omega}}_i|\\
&\leq L_4\left(\omega_i, \sup_{e_j\in\overline{N}_i}\omega_j, D_i,|\kappa_i^*|\right)\| \omega-\Tilde{\omega}\|_\infty^i.
\end{aligned}\]
Assume there exist two solutions $\omega^{(1)}(t)$ and $\omega^{(2)}(t)$ satisfying the same initial condition $\omega^{(1)}(0) = \omega^{(2)}(0)$. On $[0,T]$, similar to \eqref{eq:solu_bound}, under the assumption of $(A_2)$ and $(A_3)$, 
\begin{equation}\label{eq:omega_bounds_uniform}
    \omega_i^{(k)}(t)\in[\delta^{-1} e^{-(2+\|\kappa^*\|_\infty)T},\delta e^{(2+\|\kappa^*\|_\infty)T}]
\end{equation}
for any $e_i\in E$ and $k=1,2$. 
Therefore, under the assumption of $(A)$, 
\begin{equation}\label{eq:f_lip}
\left|f_i(\omega^{(1)}) - f_i(\omega^{(2)}) \right|\leq L_5\| \omega^{(1)}-\omega^{(2)}\|_\infty^i
\end{equation}
on $[0,T]$, where $L_5$ is uniform and depends on $\delta,D,\|\kappa^*\|_\infty$ and $T.$

 We define $\phi_i(t) = \omega^{(1)}_i(t) - \omega^{(2)}_i(t)$ for any $e_i\in E$. From \eqref{eq:omega_bounds_uniform}, for any $t \in [0, T]$ and any $e_i \in E$, $\phi_i(t)$ is uniformly bounded, i.e., $ \|\phi(t)\|_{\ell^\infty} <\infty$.
 From the Ricci flow \eqref{flow-equation3}, we have for  each $e_i\in E$,
 $$ \phi_i(t) = \int_0^t \left[ f_i(\omega^{(1)}(s)) - f_i(\omega^{(2)}(s)) \right] ds $$
 on $[0,T]$.
 Taking the absolute value on both sides and substituting \eqref{eq:f_lip} to get
 $$ |\phi_i(t)| \le \int_0^t \left|f_i(\omega^{(1)}(s)) - f_i(\omega^{(2)}(s)) \right| ds \leq L_5\int_0^t\|\phi(s)\|_{\ell^\infty} ds. $$
It follows that 
 $$ \|\phi(t)\|_{\ell^\infty} \leq L_5\int_0^t\|\phi(s)\|_{\ell^\infty} ds. $$
According to Grönwall's Lemma (see Lemma \ref{lem:Grönwall}), we  conclude
$$ \|\phi(t)\|_{\ell^\infty}= 0, \quad \forall t \in [0, T]. $$
This implies $\omega^{(1)}_e(t) = \omega^{(2)}_e(t)$ for all edges $e$ and all $t \in [0, T]$. 
Moreover, similar to the case of finite graphs,  the solution $\omega$ on $[0,T]$ can be extended to $[0,\infty)$ by the uniformly boundedness of $\kappa$ and $\kappa^*$.
\end{proof}

The following Lemma  is the key to prove the uniqueness of the Ricci flow on the graph with girth at least 6, which is also referred to as the maximum principle. The corresponding result for functions on vertices has already been proved in \cite{ge2025combinatorial}. In contrast, we focus on functions defined on edges on $G$. However, by considering its line graph $L(G)$ and the vertex Laplacian on $L(G)$, taking the edge weights of $G$ as the vertex weights of $L(G)$, and using similar proofs from the literature, the following conclusion can be obtained. The proof is omitted here;  see Corollary 3.9 of \cite{ge2025combinatorial}. 
\begin{lemma}\label{maximum}
If $g(t)$ is a bounded solution to the equation
\begin{equation*}
\frac{dg(t)}{dt} = \Delta_{\mu(t)}g + hg
\end{equation*}
with $g(0) \equiv 0$ on $E$, where $\mu(t)$ satisfies 
\begin{equation}\label{mu_bound}
    \sum_{j \sim i} \mu_{ij}(t) \le C, \quad \forall (i, t) \in E \times [0, T]
\end{equation}
for some constant $C$,  and the function $h \le B$ for some constant $B$, then for any $t \ge 0$, $g(t) \equiv 0$.
\end{lemma}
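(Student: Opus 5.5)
The plan is the standard Phragmén–Lindelöf–type maximum principle on the weighted graph $L(G)$. Fix $T>0$ and let $M=\sup_{E\times[0,T]}|g|<\infty$. First reduce to a nonnegative potential: set $\tilde g(t)=e^{-(B+1)t}g(t)$. Then
\[
\frac{d\tilde g}{dt}=\Delta_{\mu(t)}\tilde g+(h-B-1)\tilde g,
\]
with $h-B-1\le -1<0$, and $\tilde g$ is still bounded and vanishes at $t=0$. By symmetry (apply the argument to $-g$) it suffices to show $\tilde g\le 0$ on $E\times[0,T]$.

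The key step is a barrier that grows at infinity. Fix a base edge $e_0$ and let $\rho(e_i)=d_{L(G)}(e_i,e_0)$ be the graph distance in the line graph. For $\varepsilon>0$ define
\[
\psi_\varepsilon(t,e_i)=\varepsilon\bigl(\rho(e_i)+1+Kt\bigr),
\]
with $K$ to be chosen. Because $|\rho(e_j)-\rho(e_i)|\le 1$ for $j\sim i$, the bound \eqref{mu_bound} gives $|\Delta_{\mu}\rho|\le C$. Choosing $K> C$ then makes $\psi_\varepsilon$ a strict supersolution:
\[
\partial_t\psi_\varepsilon-\Delta_\mu\psi_\varepsilon-(h-B-1)\psi_\varepsilon\ \ge\ \varepsilon K-\varepsilon C+\psi_\varepsilon\ >\ 0.
\]
Here we use that $\psi_\varepsilon>0$ and $h-B-1\le -1$.

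Next, set $w=\tilde g-\psi_\varepsilon$. Since $\tilde g\le M$ and $\psi_\varepsilon\ge\varepsilon(\rho+1)$, we have $w<0$ outside the finite ball $\{\rho\le M/\varepsilon\}$, and $w(0)<0$ everywhere. Suppose $w$ becomes positive somewhere in $[0,T]$. Only finitely many edges are involved and each coordinate is continuous in $t$, so there is a first time $t_0>0$ and an edge $e_i$ in the ball with $w(t_0,e_i)=0$ and $w\le0$ on $E\times[0,t_0]$. At that point $\partial_t w(t_0,e_i)\ge 0$. Also $\Delta_\mu w(t_0,e_i)\le 0$, since $e_i$ is a spatial maximum and $\mu\ge0$. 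Finally $(h-B-1)w(t_0,e_i)=0$. Together these give
\[
\partial_t w-\Delta_\mu w-(h-B-1)w\ \ge\ 0
\]
at $(t_0,e_i)$. But this quantity equals $-\bigl(\partial_t\psi_\varepsilon-\Delta_\mu\psi_\varepsilon-(h-B-1)\psi_\varepsilon\bigr)<0$, a contradiction. Hence $\tilde g\le\psi_\varepsilon$ for every $\varepsilon$, and letting $\varepsilon\to0$ gives $\tilde g\le 0$.

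The main obstacle I expect is rigor in the "first touching time" step on an infinite set. This is exactly why the barrier must force $w<0$ uniformly outside a finite set of edges, independently of $t\in[0,T]$. With the linear-in-distance barrier this is immediate from the boundedness of $g$ and the uniform-in-$t$ bound \eqref{mu_bound}. Some care is also needed with regularity: the paper allows $g$ to be only $C^1$ in $t$ for each edge, which suffices. Since $T$ is arbitrary, $g\equiv0$ for all $t\ge0$.
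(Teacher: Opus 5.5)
Your proof is correct. The paper gives no argument of its own here; it only says that the vertex maximum principle of Ge--Hua--Zhou (their Corollary 3.9) carries over to the line graph $L(G)$. Your argument is a complete, self-contained proof along that route: the rescaling $\tilde g=e^{-(B+1)t}g$, the barrier $\varepsilon(\rho+1+Kt)$ built from the $L(G)$-distance, and the first-touching-time argument on a finite ball.

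There is one small slip. The bound $\tilde g\le M$ needs $B\ge -1$. This is harmless: replace $B$ by $\max\{B,0\}$, or use $|\tilde g|\le Me^{|B+1|T}$. Also note that the ball $\{\rho\le M/\varepsilon\}$ is finite, with $\rho$ finite everywhere, because $L(G)$ is locally finite and connected, which it inherits from $G$.
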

\begin{proof}[Proof of Theorem \ref{main3}] 
 Let $r$ and $\Tilde{r}$ be two solutions of the flow \eqref{flow-equation_r} with the same initial value $r(0)$ on $G$. Let  $u^s(t):=sr(t) + (1 - s)\Tilde{r}(t)$ for $s\in[0,1]$. Then for any $e_i\in E$,
\begin{equation*}
\frac{d(r_i(t) - \Tilde{r_i}(t))}{dt} = -(\kappa_i(r(t)) - \kappa_i(\Tilde{r}(t))).
\end{equation*}
For a fixed $t$, similar to \eqref{eq:laplace},
\[
\kappa_i(r(t)) - \kappa_i(\Tilde{r}(t))= -\Delta_{\mu(t)}(r_i(t) - \Tilde{r_i}(t))
\]
where for any $j\sim i$,
\begin{equation*}\label{mu_t}
\mu_{ij}(t) = -\int_{0}^{1} \frac{\partial \kappa_i}{\partial u^s_j}(u^s(t))ds.
\end{equation*}
It yields that for any $t>0$ and $e_i\in E$,
\begin{equation*}
\frac{d(r_i(t) - \Tilde{r_i}(t))}{dt} = \Delta_{\mu(t)}(r_i(t) - \Tilde{r_i}(t)).
\end{equation*}
 
Since
\[-\sum_{e_j\in E_x,j\neq i}\frac{\partial \kappa_i}{\partial u^s_j}(u^s)=2 e^{u^s_i}\frac{\sum_{e_j\in E_x,j\neq i}e^{u^s_j}}{\left(\sum_{e_k\in E_x} e^{u^s_k}\right)^2}\leq 2,\]
we have
\[\sum_{j \sim i} \mu_{ij}(t) =-\int_{0}^{1} \sum_{j \sim i} \frac{\partial \kappa_i}{\partial u^s_j}(u^s(t))ds\leq \int_{0}^{1} \left(\sum_{e_j\in E_x,j\neq i}\frac{\partial \kappa_i}{\partial u^s_j}(u^s(t))+\sum_{e_j\in E_y,j\neq i}\frac{\partial \kappa_i}{\partial u^s_j}(u^s(t))\right)ds\leq 4.\]
Let $g(t)=r(t) - \Tilde{r}(t)$ and $h=0$, by applying  Lemma \ref{maximum} to $g(t)$, we can conclude that $r_i(t) = \Tilde{r}_i(t)$ for each $e_i\in E$ on $[0,T]$. This completes the proof.
\end{proof}


\section{Convergence of the flow \eqref{flow-equation3} on graphs with girth at least 6}\label{section3}
By the definition of Lin-Lu-Yau curvature, for any $e_i\in E$, $\kappa_i$ is determined entirely by $\overline{N}_i$ and $\kappa_i(r):\mathbb{R}^{|\overline{N}_i|}\rightarrow \mathbb{R}$. 
Denote $J_{ij}(r)=\frac{\partial \kappa_{i}}{\partial r_j}(r)$ for any $e_i,e_j\in E$. Let $e_i=(x,y)$. Notice that

\[J_{ij}(r)=
\begin{cases} 
2\frac{e^{r_i}}{m(x)}\left(1-\frac{e^{r_i}}{m(x)}\right)+2\frac{e^{r_i}}{m(y)}\left(1-\frac{e^{r_i}}{m(y)}\right), \quad &e_j=e_i,\\
-2\frac{e^{r_i+r_j}}{m(x)^2},\quad &e_j\in E_x\setminus\{e_i\},\\ 
-2\frac{e^{r_i+r_j}}{m(y)^2}, \quad &e_j\in E_y\setminus\{e_i\},\\ 
0, \quad &\mbox{otherwise,}
\end{cases} 
\]
where $m(x) = \sum_{e_k\in E_x} e^{r_k}$.

\subsection{Proof of Theorem \ref{main5}}

Denote  $u\in C(E)$ and  $r=u+r^*$ on $E$. Expanding $\kappa_i(r)$ in a first-order Taylor series around $r^*$ to get
\begin{equation}\label{eq:tylor}
    \kappa_i(r) = \kappa_i(r^*) + \sum_{e_k \in E}  J_{ik}(r^*) u_k + F_i(u).
\end{equation}
Notice that
\begin{equation*}\label{eq:trans}
    \sum_{e_k\in E} J_{ik} u_k = J_{ii} u_i + \sum_{k \neq i} J_{ik} u_k = -\sum_{e_k\in N_i} J_{ik} u_i + \sum_{e_k\in N_i} J_{ik} u_k = \sum_{e_k\in N_i} J_{ik} (u_k-u_i).
\end{equation*}
Then 
$$ F_i(u):=\kappa_i(u+r^*)-\kappa_i(r^*) - \sum_{e_k \in N_i}  J_{ik}(r^*) (u_k-u_i) .$$

\begin{lemma}\label{lemma:3.5}
For any $e_i\in E$, 
\begin{equation}\label{eq:F_upper}
    |F_i(u)| \le \frac{\sqrt{3}}{18} \sum_{e_k\in N_i} (u_k-u_i)^2 .
\end{equation}
\end{lemma}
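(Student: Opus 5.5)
The plan is to exploit the fact that $\kappa_i$ depends only on the differences $v_k:=u_k-u_i$, $e_k\in N_i$. Then $F_i$ is exactly the second-order Taylor remainder of a smooth function of $v$ at $v=0$, and I would bound it by a uniform estimate on second directional derivatives. Write $e_i=(x,y)$. Since the girth is at least $6$, formula \eqref{def:curvature} gives $\kappa_i(u+r^*)=2\varphi_x(v)+2\varphi_y(v)-2$, where
\[
\varphi_x(v)=\frac{1}{1+\sum_{e_k\in E_x\setminus\{e_i\}} b_k e^{v_k}},\qquad b_k=e^{r^*_k-r^*_i}>0,
\]
and similarly for $\varphi_y$. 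Because the girth is at least $6$, $E_x\setminus\{e_i\}$ and $E_y\setminus\{e_i\}$ are disjoint, so $N_i$ splits into these two sets and the two terms involve disjoint variables. By the computation leading to \eqref{eq:trans}, the linear term $\sum_{e_k\in N_i}J_{ik}(r^*)(u_k-u_i)$ is precisely the differential of $2\varphi_x+2\varphi_y$ at $v=0$ applied to $v$. Hence $F_i(u)=2R_x+2R_y$, where $R_x$ is the first-order Taylor remainder of $\varphi_x$ at $0$ in the direction $v|_{E_x}$.

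Next I would use the integral form of the remainder. Set $g(s)=\varphi_x(sv)$ for $s\in[0,1]$. Then $R_x=\int_0^1(1-s)g''(s)\,ds$, so $|2R_x|\le \sup_{s\in[0,1]}|g''(s)|$. Writing $S(s)=\sum_k b_ke^{sv_k}$, $w_k=b_ke^{sv_k}/(1+S)$ and $p=\sum_k w_k=S/(1+S)\in(0,1)$, a direct computation gives
\[
g''(s)=(1-p)\Bigl(-\sum_k w_kv_k^2+2\Bigl(\sum_k w_kv_k\Bigr)^2\Bigr).
\]
The goal is to show $|g''|\le \frac{\sqrt3}{18}\sum_{e_k\in E_x\setminus\{e_i\}}v_k^2$. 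Summing this with the analogous bound at $y$ would then give \eqref{eq:F_upper}.

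For the upper side, Cauchy--Schwarz gives $(\sum w_kv_k)^2\le p\sum w_kv_k^2$, and $w_k\le p$ gives $\sum w_kv_k^2\le p\sum v_k^2$. Together these yield $g''\le p(1-p)(2p-1)\sum v_k^2$. Maximizing $f(p)=p(1-p)(2p-1)$ over $[0,1]$ gives $p=\frac{3+\sqrt3}{6}$ and $f=\frac{\sqrt3}{18}$, which is exactly the constant in the lemma. In the single-neighbour case the lower side works the same way: there $g''=w(1-w)(2w-1)v^2$, so $|g''|\le\frac{\sqrt3}{18}v^2$.

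The main obstacle is the lower (negative) side when $E_x\setminus\{e_i\}$ has two or more edges. The crude bound only gives $g''\ge-(1-p)p\sum v_k^2\ge-\frac14\sum v_k^2$. Worse, the choice $w_1=w_2=\frac14$, $v=(1,-1)$ appears to give $g''=-\frac18\sum v_k^2$, and $\frac18$ exceeds $\frac{\sqrt3}{18}$. So I would first recheck that example carefully. If it holds, the lemma as stated needs the constant enlarged, to $\frac18$ or $\frac14$; that change would only affect the numerical smallness threshold in Theorem \ref{main5}. Otherwise I would look for a sharper joint constraint between the $w_k$ and the spread of the $v_k$ that recovers $\frac{\sqrt3}{18}$.
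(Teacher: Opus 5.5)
Your suspicion is right, and it settles the question: the negative-side obstruction you found is a counterexample to the lemma itself, not a weakness of your method. So the proposal cannot be completed, and looking for a sharper joint constraint on the $w_k$ and $v_k$ will not help.

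To turn your pointwise example into a counterexample, put the configuration at $s=0$ and shrink $v$. If $v$ is supported on $E_x\setminus\{e_i\}$, then $F_i(tv)=2R_x(tv)=t^2g_v''(0)+O(t^3)$, where $g_v(s)=\varphi_x(sv)$. With your weights $b_1=b_2=\frac12$ and $v=(1,-1)$ this is exact: $F_i=\frac{2}{1+\cosh t}-1=-\tanh^2(t/2)\approx-\frac{t^2}{4}$. The right-hand side is only $\frac{\sqrt3}{9}t^2\approx0.192\,t^2$.

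The lemma also fails in the model case of Proposition \ref{pro:T_3}. Take the $3$-regular tree with $r^*\equiv0$, put $u=t$ and $u=-t$ on the two other edges at $x$, and $u=0$ elsewhere. The linear term vanishes because $J_{ik}(0)=-\frac29$ for both edges, and
\[
F_i(u)=\frac{2}{1+2\cosh t}-\frac23=-\frac{4(\cosh t-1)}{3(1+2\cosh t)}=-\frac29t^2+O(t^4).
\]
This exceeds the bound $\frac{\sqrt3}{18}\cdot 2t^2\approx0.192\,t^2$ in absolute value. This $u$ is finitely supported with small $\ell^\infty$-norm, which is exactly the regime where the lemma is used in Theorem \ref{th:3.6}.

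The paper's proof does not avoid this either. It bounds every Hessian entry by $\frac{\sqrt3}{9}$, which is correct, and then applies $|z_kz_l|\le\frac12(z_k^2+z_l^2)$. But summing over all pairs in a block of size $m$ gives $\sum_{k,l}|z_kz_l|\le m\sum_k z_k^2$, so that argument only yields the constant $\frac{\sqrt3}{18}(D-1)$. In the tree example, the $E_x$-block of the Hessian at $0$ is $\frac{2}{27}\begin{pmatrix}-1&2\\2&-1\end{pmatrix}$. Its eigenvalue $-\frac29$ in the direction $(1,-1)$ is what produces the $-\frac29t^2$.

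What your argument does prove is a correct, degree-independent version of the lemma.

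\emph{Upper side.} The bound should read $g''\le\max\{0,\,p(1-p)(2p-1)\}\sum_k v_k^2\le\frac{\sqrt3}{18}\sum_k v_k^2$. For $p<\frac12$ the factor $2p-1$ is negative, so the inequality as you wrote it reverses; but then $g''\le 0$ anyway.

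\emph{Lower side.} Your crude bound $g''\ge-(1-p)\sum_k w_kv_k^2\ge-p(1-p)\sum_k v_k^2\ge-\frac14\sum_k v_k^2$ is valid.

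Combining these with $|2R_x|\le\sup_s|g''(s)|$ and the same estimate at $y$ gives $|F_i(u)|\le\frac14\sum_{e_k\in N_i}(u_k-u_i)^2$. Your two-edge example shows that no constant below $\frac18$ is possible. The smallness threshold $3\sqrt3=\bigl(2\cdot\frac{\sqrt3}{18}\bigr)^{-1}$ in Theorems \ref{main5} and \ref{th:3.6} therefore has to be recomputed with the corrected constant.
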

\begin{proof}
For a fixed $e_i\in E$, let $z_k=u_k - u_i$ for any $e_k\in \overline{N}_i$. Notice that $z_i=0$. Sort the edges in \(N_i\), obtaining \(N_i=\{e_i^1,\cdots,e_i^{|N_i|}\}\). Let $z_k=z(e_i^k)$ and $r_k=r(e_i^k)$.
Denote the vector $z=(z_1,\cdots,z_{|N_i|})$. Since $\kappa_i$ possesses translation invariance, i.e. $\kappa(r)=\kappa(r+c)$ for any constant function $c$, 
    $F_i(u)$ can be rewritten as $\tilde{F}_i:\mathbb{R}^{|N_i|}\rightarrow\mathbb{R}$ by
\begin{equation}\label{eq:F}
    \tilde{F}_i(z) = \kappa_i(r^* + z) - \kappa_i(r^*) - \sum_{e_k \in N_i} J_{ik}(r^*)  z_k.
\end{equation}
Moreover,  we have
$$\tilde{F}_i(0) = 0.$$
And,  for any $e_k \in N_i$,
\begin{equation}\label{gradient_F}
    \frac{\partial \tilde{F}_i}{\partial z_k}(0) = \left.J_{ik} (r^*+z)\right|_{z=0} - J_{ik} (r^*) = 0,
\end{equation}
which implies that $\nabla \tilde{F}(0) =0$. 
Let $\xi = \theta z$ with $0 < \theta < 1$, we have
\begin{equation*}\label{eq:bound_F}
    \tilde{F}_i(z) = \tilde{F}_i(0) + \nabla \tilde{F_i}(0) \cdot z + \frac{1}{2} z^T H_i(\xi) z=\frac{1}{2} z^T H_i(\xi) z,
\end{equation*}
where $H_i(\xi)=(H_i^{kl}(\xi))_{m\times m}$ is the Hessian matrix of $\tilde{F}$ at $\xi$. We {\it claim} that 
 \begin{equation}\label{eq:hessian}
 \left| H_i^{kl}(z) \right| \le \frac{\sqrt{3}}{9}, \quad \forall e_i^k,e_i^l\in N_i.
 \end{equation}
By virtue of  $ |z_kz_l| \le \frac{1}{2}(z_k^2 + z_l^2) $,  we have
 \begin{equation}\label{eq:F_upper_z}
    |\tilde{F}_i(z)| \le \frac{\sqrt{3}}{18} \sum_{e_k\in N_i} z_k^2,
\end{equation}
which yileds \eqref{eq:F_upper}.
 It remains to prove the claim.
From \eqref{eq:F}, 
$$H_i^{kl}(z) = \left. \frac{\partial^2 \kappa_i}{\partial r_k \partial r_l}(r)\right|_{r = r^* + z}.$$
Denote $e_i=(x,y)$. When $e_i^k\neq e_i^l\in E_x$, 
$$\frac{\partial^2 \kappa_i}{\partial r_k \partial r_l}(r) =4\frac{e^{r_i+r_k+r_l}}{m(x)^3}.$$
When $e_i^k=e_i^l\in E_x$,
$$\frac{\partial^2 \kappa_i}{\partial r_k ^2}(r) =-2\frac{e^{r_i+r_k}}{m(x)^2}\left(1-\frac{2e^{r_k}}{m(x)}\right).$$
 If $e_i^k,e_i^l$ is incident to $y$, we have similar results by repalcing $m(x)$ to $m(y)$. Otherwise, $\frac{\partial^2 \kappa_i}{\partial r_k \partial r_l}(r)=0$. Notice that the facts of $\frac{e^{r_j}}{m(x)}\in (0,1)$, $\frac{e^{r_i}+e^{r_k}+e^{r_l}}{m(x)}\in(0,1]$ for $e^{r_i},e^{r_k},e^{r_l}\in E_x$, 
$$x y z \le \left( \frac{x+y+z}{3} \right)^3 \le \left(\frac{1}{3}\right)^3 = \frac{1}{27},\quad \forall x+y+z\in(0,1].$$
 and
\begin{equation*}\label{eq:estimate}
    |xy(1-2x)|\leq |x(1-x)(1-2x)|\leq \frac{\sqrt{3}}{18}, \quad \forall x\in(0,1), x+y\in(0,1].
\end{equation*}
They lead directly to the claim.
\end{proof}

From Lemma \ref{lemma:3.5}, we have the following conclusion.
\begin{proposition}\label{lemma:3.52}
Under the assumption of $(A_1)$, there exist $C_1,C_2>0$ depending on $D$ such that for any $u\in \ell^2(E)$,
\begin{equation}\label{eq:F_bound}
\|F(u)\|_{\ell^2} \le C_1 \|u\|_{\ell^2}^2,
\end{equation}
Moreover, for any $u,v\in \ell^2(E)$, 
\begin{equation}\label{eq:F_bound_gradient}
\|F(u) - F(v)\|_{\ell^2} \le C_2\max(\|u\|_{\ell^2},\|v\|_{\ell^2}) \|u - v\|_{\ell^2}.
\end{equation}
\end{proposition}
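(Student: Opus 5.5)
The first bound (\eqref{eq:F_bound}) follows directly from Lemma \ref{lemma:3.5}; the Lipschitz-type bound (\eqref{eq:F_bound_gradient}) I will derive from a pointwise estimate obtained by the mean value theorem on the finite-dimensional local function $\tilde F_i$, reusing the uniform Hessian bound \eqref{eq:hessian}. The counting tool is $(A_1)$: $|N_i|\le 2(D-1)\le 2D$ for every $e_i$, and each $e_k$ lies in $N_i$ for at most $2D$ edges $e_i$.

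For \eqref{eq:F_bound}, Lemma \ref{lemma:3.5} and Cauchy--Schwarz give
\[
|F_i(u)|^2\le \frac{3}{324}\,|N_i|\sum_{e_k\in N_i}(u_k-u_i)^4 .
\]
Since $\|u\|_{\ell^\infty}\le\|u\|_{\ell^2}$, we have $(u_k-u_i)^4\le 4\|u\|_{\ell^2}^2\,(u_k-u_i)^2\le 8\|u\|_{\ell^2}^2(u_k^2+u_i^2)$. Summing over $i$ and using the double-counting bound, $\sum_i\sum_{e_k\in N_i}(u_k^2+u_i^2)\le 4D\|u\|_{\ell^2}^2$, so $\|F(u)\|_{\ell^2}^2\le C D^2\|u\|_{\ell^4}^{\,\cdot}\cdots\le C_1^2\|u\|_{\ell^2}^4$ with $C_1$ a constant times $D$.

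For \eqref{eq:F_bound_gradient}, I work pointwise. With $z=(u_k-u_i)_k$ and $w=(v_k-v_i)_k$, we have $F_i(u)-F_i(v)=\tilde F_i(z)-\tilde F_i(w)$. By the mean value theorem this equals $\nabla\tilde F_i(\eta)\cdot(z-w)$ for some $\eta$ on the segment between $w$ and $z$. Because $\nabla\tilde F_i(0)=0$ by \eqref{gradient_F}, integrating the Hessian along $[0,\eta]$ and using \eqref{eq:hessian} gives
\[
|\partial_k\tilde F_i(\eta)|\le \tfrac{\sqrt3}{9}\sum_l|\eta_l|\le \tfrac{\sqrt3}{9}\sum_{e_l\in N_i}\max(|z_l|,|w_l|).
\]
The claim \eqref{eq:hessian} was stated for all arguments, since the entry bounds $1/27$ and $\sqrt3/18$ are uniform in $r$. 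Hence
\[
|F_i(u)-F_i(v)|\le \tfrac{\sqrt3}{9}\Big(\sum_{e_l\in N_i}(|z_l|+|w_l|)\Big)\sum_{e_k\in N_i}|z_k-w_k|.
\]
The first factor is at most $4D\max(\|u\|_\infty,\|v\|_\infty)\le 4D\max(\|u\|_{\ell^2},\|v\|_{\ell^2})$. For the second factor, Cauchy--Schwarz gives $\big(\sum_{e_k\in N_i}|z_k-w_k|\big)^2\le 2D\sum_{e_k\in N_i}\big((u-v)_k-(u-v)_i\big)^2$. Summing over $i$ with the same double counting as before yields $\|F(u)-F(v)\|_{\ell^2}\le C_2\max(\|u\|_{\ell^2},\|v\|_{\ell^2})\|u-v\|_{\ell^2}$, with $C_2$ polynomial in $D$.

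The main obstacle is this pointwise Lipschitz estimate for $\tilde F_i$: I need the gradient of $\tilde F_i$ to grow only linearly in the local differences. That requires the Hessian bound to hold on the entire segment between $w$ and $z$ rather than only at the base point, and it forces converting the $\ell^\infty$ bounds into $\ell^2$ bounds via $\|\cdot\|_\infty\le\|\cdot\|_{\ell^2}$. The remaining steps are routine bookkeeping of the constants in terms of $D$.
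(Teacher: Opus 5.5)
Your proof is correct and follows the same route as the paper. For \eqref{eq:F_bound} you use Lemma~\ref{lemma:3.5} with bounded-degree double counting; the garbled final display there is harmless, since your preceding estimate already gives $\|F(u)\|_{\ell^2}^2\le cD^2\|u\|_{\ell^2}^4$. For \eqref{eq:F_bound_gradient} you run a segment argument and control $\nabla\tilde F_i$ by integrating the uniform Hessian bound \eqref{eq:hessian} from $\nabla\tilde F_i(0)=0$.

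Your bookkeeping in the second part is actually the right one, and tighter than the paper's write-up. You bound $\sum_{e_l\in N_i}(|z_l|+|w_l|)\le 8D\max(\|u\|_{\ell^2},\|v\|_{\ell^2})$ uniformly in $e_i$ (your $4D$ should be $8D$, which is immaterial), and you keep $\sum_{e_k\in N_i}|z_k-w_k|$ local until summing over $e_i$. That is what makes the sum over the infinite edge set finite, and it yields $C_2$ depending only on $D$. The paper's write-up instead replaces both local factors by global $\ell^2$ norms before summing, which would not sum.
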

\begin{proof}

From \eqref{eq:F_upper} and $|N_i|\leq 2(D-1)$, for any $ e_i\in E$, 
 $$|F_i(u)| \le \frac{\sqrt{3}}{9} \sum_{e_k \in N_i} (u_i^2 + u_k^2) \le \frac{2\sqrt{3}(D-1)}{9}  u_i^2 + \frac{\sqrt{3}}{9}\sum_{e_k \in N_i} u_k^2\leq C\sum_{e_k \in \overline{N}_i} u_k^2,$$
 where $C=\frac{2\sqrt{3}(D-1)}{9}$.
Therefore, applying Cauchy-Schwarz inequality to yield
$$\|F(u)\|_{\ell^2}^2 = \sum_{e_i \in E} |F_i(u)|^2 \le C^2(2D-1) \sum_{e_i \in E} \sum_{e_k \in \overline{N}_i} u_k^4 .$$
Rerranging the sum to yield
$$\sum_{e_i \in E} \sum_{e_k \in \overline{N}_i} u_k^4 \leq 2(D-1)\sum_{e_k\in E}u_k^4,$$
combining with the fact that $\|u\|_{\ell^2} \ge \|u\|_{\ell^4}$ to get \eqref{eq:F_bound}.

As for the second result, for any $u,v\in \ell^2(E)$, we define the corresponding vectors $z^{(u)}, z^{(v)} \in \mathbb{R}^m$ for each fixed $e_i\in E$, as follows
$$z_k^{(u)} = u_k - u_i, \quad z_k^{(v)} = v_k - v_i, \quad \forall e_k \in N_i$$
We construct the line $z(s) = z^{(v)} + s(z^{(u)} - z^{(v)})$ with $s \in [0, 1]$ connecting $z^{(v)}$ to $z^{(u)}$. Notice that $z_i^{(u)}=z_i^{(v)}=z_i(s)=0.$
For any $e_i\in E$,
\begin{equation}\label{eq:inter}
    \tilde{F}_i(z^{(u)}) - \tilde{F}_i(z^{(v)}) = \int_0^1 \sum_{e_k \in N_i} \frac{\partial \tilde{F}_i}{\partial z_k}(z(s)) \cdot \left( z_k^{(u)} - z_k^{(v)} \right) \, ds
\end{equation}
From \eqref{gradient_F}, we have 
 $$\frac{\partial \tilde{F}_i}{\partial z_k}(z(s)) = \int_0^1 \sum_{e_l \in N_i} H_{kl}(\tau z(s)) \cdot z_l(s) \, d\tau.$$
Combining with \eqref{eq:hessian} to get
 $$\left| \frac{\partial \tilde{F}_i}{\partial z_k}(z(s)) \right| \le \frac{\sqrt{3}}{9}\sum_{e_l \in N_i} |z_l(s)|.$$
Substituting it into \eqref{eq:inter} and applying Cauchy-Schwarz inequality, we obtain
\[\begin{split}
    |\tilde{F}_i(z^{(u)}) - \tilde{F}_i(z^{(v)})|^2 &\le \frac{3}{81} \int_0^1 \left( \sum_{e_l \in N_i} |z_l(s)| \right)^2 \left( \sum_{e_k \in N_i} \left| z_k^{(u)} - z_k^{(v)} \right| \right)^2 ds\\
    &\le \frac{12}{81}(D-1)^2 \int_0^1 \left( \sum_{e_l \in N_i} z_l(s)^2 \cdot \sum_{e_k \in N_i} (z_k^{(u)} - z_k^{(v)})^2 \right) ds.
\end{split}\]
Let $h(s)=v+s(u-v)$, we have
$$\sum_{e_l \in N_i} z_l(s)^2 = \sum_{e_l \in N_i} (h_l(s) - h_i(s))^2 \le 2 \sum_{e_l \in N_i} (h_l(s)^2 + h_i(s)^2) \le 4(D-1) \|h(s)\|_{\ell^2}^2,$$
and 
$$\int_0^1 \|h(s)\|_{\ell^2}^2 \, ds \le \max(\|u\|_{\ell^2}^2,\|v\|_{\ell^2}^2).$$
Similarly, let $\delta=u-v$,
$$\sum_{e_k \in N_i} (z_k^{(u)} - z_k^{(v)})^2 = \sum_{e_k \in N_i} (\delta_k - \delta_i)^2 \le 2(D-1) \|\delta\|_{\ell^2}^2 .$$
It follows that
$$\|F(u) - F(v)\|_{\ell^2}  \le C_2 \max(\|u\|_{\ell^2},\|v\|_{\ell^2})\|u - v\|_{\ell^2},$$
where $C_2>0$ depends on $D$, $\|u\|_{\ell^2}$ and $\|v\|_{\ell^2}.$
This completes the proof.
\end{proof}

Denote the energy by 
$$\mathcal{E}(u) = \sum_{e_i,e_j \in E} J_{ij}(r^*) u_i u_j$$
and the $\ell^2$-norm of the gradient of $u$ by
$$\|\nabla u\|_{\ell^2}^2 = \sum_{e_i\in E}\sum_{j\sim i} (u_i - u_j)^2.$$
\begin{proposition}\label{lemma:3.4}
Suppose that $r^*\in\ell^\infty(E)$ and $(A_1)$. There exists $C_3>0$ depending on $D$ and $\|r^*\|_\infty$ such that
    $$\mathcal{E}(u) \ge  C_3 \|\nabla u\|_{\ell^2}^2.$$
\end{proposition}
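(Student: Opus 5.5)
Since $J_{ii}(r^*)=-\sum_{j\sim i}J_{ij}(r^*)$ by \eqref{eq:par_kappa} and $J$ is symmetric, $J_{ij}=J_{ji}$, the energy is a weighted Dirichlet form on the line graph $L(G)$. The first step is to symmetrize the sum. Rewrite
\[
\mathcal{E}(u)=\sum_{e_i\in E}u_i\sum_{j\sim i}J_{ij}(r^*)(u_j-u_i),
\]
and pair the term $(i,j)$ with the term $(j,i)$ using $J_{ij}=J_{ji}$. This gives the identity
\[
\mathcal{E}(u)=\frac12\sum_{e_i\in E}\sum_{j\sim i}\bigl(-J_{ij}(r^*)\bigr)(u_i-u_j)^2 .
\]
For $u\in\ell^2(E)$ with $(A_1)$, every sum involved converges absolutely: each row of $J$ has at most $2D-1$ nonzero entries, each bounded by $2$. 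This justifies the rearrangement; I will state the identity first for finitely supported $u$ and pass to the limit if necessary.

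It then suffices to bound the weights $-J_{ij}(r^*)$ uniformly from below. For $e_j\in E_x\setminus\{e_i\}$ we have $-J_{ij}(r^*)=2e^{r^*_i+r^*_j}/m(x)^2$, where $m(x)=\sum_{e_k\in E_x}e^{r^*_k}\le d_x e^{\|r^*\|_\infty}\le De^{\|r^*\|_\infty}$. The numerator satisfies $e^{r^*_i+r^*_j}\ge e^{-2\|r^*\|_\infty}$. Hence
\[
-J_{ij}(r^*)\ge \frac{2}{D^2e^{4\|r^*\|_\infty}}.
\]
The same bound holds for $e_j\in E_y$. Girth at least $6$ (indeed no triangles) ensures that two distinct incident edges share exactly one vertex, so each $J_{ij}$ for $j\sim i$ is given by exactly one of these formulas.

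Substituting this lower bound into the identity gives
\[
\mathcal{E}(u)\ge \frac{1}{D^2e^{4\|r^*\|_\infty}}\,\|\nabla u\|_{\ell^2}^2,
\]
so one may take $C_3=(De^{2\|r^*\|_\infty})^{-2}$. This is consistent with the constant appearing in Theorem \ref{main5}.

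The only delicate point is the summation-by-parts step in infinite volume. I expect it to be harmless because $u\in\ell^2$ and the degrees are bounded by $(A_1)$, so the double series converges absolutely and may be reordered freely.
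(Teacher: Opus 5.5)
Your proof is correct and is essentially the paper's argument: both write $\mathcal{E}(u)$ as the weighted Dirichlet form $\sum_{x\in V}\sum_{e_i,e_j\in E_x}\frac{e^{r^*_i+r^*_j}}{m(x)^2}(u_i-u_j)^2$ and bound the weights below by $(De^{2\|r^*\|_\infty})^{-2}$, which gives the same constant $C_3$. The difference is only in how the identity is derived: the paper works vertex by vertex, using $2\sum_i p_iu_i^2-2\bigl(\sum_i p_iu_i\bigr)^2=\sum_{i,j}p_ip_j(u_i-u_j)^2$ with $p_i=e^{r^*_i}/m(x)$, whereas you use the zero row sums \eqref{eq:par_kappa} and the symmetry of $J$. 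One small correction: distinct edges share at most one vertex simply because $G$ is simple, not because of the girth; the girth assumption is what gives the explicit formula for $J$.
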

\begin{proof}
Arrange the sum of $\mathcal{E}(u)$  to yield
$$\mathcal{E}(u) = \sum_{x \in V} \left(  \sum_{e_i, e_j \in E_x} J_{ij}^{(x)}(r^*) u_i u_j \right),$$
where $J_{ij}^{(x)}(r)$ is $-2\frac{e^{r_i+r_j}}{m(x)}$ when $e_i\neq e_j\in E_x$, and $2\frac{e^{r_i}}{m(x)}\left(1-\frac{e^{r_i}}{m(x)}\right)$ when $j=i$.
For any $x\in V$
\[\begin{split}
    \sum_{e_i, e_j \in E_x} J_{ij}^{(x)}(r) u_i u_j
    &=\sum_{e_i \in E_x} J_{ii}^{(x)}(r)  u_i^2 +\sum_{e_i\neq e_j \in E_x} J_{ij}^{(x)}(r)  u_i u_j\\
    &=2\sum_{e_i \in E_x} \frac{e^{r_i}}{m(x)}\left(1-\frac{e^{r_i}}{m(x)}\right) u_i^2 -2\sum_{e_i\neq e_j \in E_x} \frac{e^{r_i+r_j}}{m(x)} u_i u_j\\
    &=2\sum_{e_i \in E_x} \frac{e^{r_i}}{m(x)}u_i^2 
    - 2\left(\sum_{e_i \in E_x}\frac{e^{2r_i}}{m(x)^2} u_i^2 +\sum_{e_i\neq e_j \in E_x}\frac{e^{r_i+r_j}}{m(x)} u_i u_j\right)\\
    &=2\sum_{e_i \in E_x} \frac{e^{r_i}}{m(x)}u_i^2 
    - 2\left(\sum_{e_i \in E_x}\frac{e^{r_i}}{m(x)} u_i \right)^2,
\end{split}\]
From $\sum_{e_i \in E_x}\frac{e^{r_i}}{m(x)}=1$, we have 
$$ \sum_{e_i, e_j \in E_x} J_{ij}^{(x)}(r) u_i u_j=\sum_{e_i,e_j \in E_x}\frac{e^{r_i+r_j}}{m(x)^2}(u_i-u_j)^2.  $$
Since $r^*\in\ell^\infty(E)$ and the degree is bounded by $D$, we obtain 
 \begin{equation*}\label{lowerbound_J}
     \frac{e^{r_i^*}}{\sum\limits_{e_k\in E_x} e^{r^*_{k}}}  \ge (De^{2\|r^*\|_\infty})^{-1} > 0.
 \end{equation*}
 This completes the proof.
\end{proof}
 Let  $u(t) = r(t) - r^*$ with $t>0$, where $r(t)$ is the solution to the Ricci flow \eqref{flow-equation_r} with the initial value $r(0)$. 
Then, by \eqref{eq:tylor}, the Ricci flow \eqref{flow-equation_r} can be transformed into
\begin{equation*}\label{eq:moflow}
    \frac{du_i(t)}{dt} =  - \sum_{e_k \in N_i} J_{ik}(r^*)  (u_k-u_i) - F_i(u)
\end{equation*}
with the initial value $u(0)$. 
Define the Lapalace operator on $C(E)$ as follows: for any  $f\in C(E)$,  
\[\Delta_J f_i:=\sum_{e_k\in N_i}(-J_{ik}(r^*))(f_k-f_i).\]
Then, the Ricci flow \eqref{flow-equation_r} can be rewritten as 
\begin{equation}\label{eq:moflow_v1}
    \frac{du_i}{dt} =  \Delta_J u_i - F_i(u).
\end{equation}
\begin{theorem}\label{th:3.6}
    Assume that $\|u(0)\|_{\ell^\infty}\leq 3\sqrt{3} \min\{1,(De^{2\|r^*\|_\infty})^{-2}\}$ and  the assumption $(A_1)$ holds. Then, the solution $u(t)$ of \eqref{eq:moflow_v1}  with the initial value $u(0)$ exists for $t \in [0, \infty)$ satisfying $\|u(t)\|_{\ell^2}$ is non-increasing and $$\int_0^\infty \mathcal{E}(u(t)) \, dt <\infty.$$
\end{theorem}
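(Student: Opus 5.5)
Global existence is already settled: by Theorem \ref{main1} the flow \eqref{flow-equation_r} has a smooth solution on $[0,\infty)$ for girth at least 6, hence so does \eqref{eq:moflow_v1}. So the work is the $\ell^2$ energy estimate. We assume, as in Theorem \ref{main5}, that $u(0)\in\ell^2(E)$. The plan is to differentiate $\frac12\|u(t)\|_{\ell^2}^2$ along \eqref{eq:moflow_v1}, identify the linear part with $-\mathcal{E}(u)$, and absorb the nonlinear part $F(u)$ into it.

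\textbf{Step 1: the energy identity.} By the symmetry $J_{ik}=J_{ki}$ and the row-sum identity \eqref{eq:par_kappa}, summation by parts gives
\[
\sum_i u_i\,\Delta_J u_i=-\tfrac12\sum_i\sum_{k\in N_i}(-J_{ik})(u_k-u_i)^2=-\mathcal{E}(u).
\]
Hence, formally,
\[
\frac{d}{dt}\tfrac12\|u\|_{\ell^2}^2=-\mathcal{E}(u)-\sum_i u_iF_i(u).
\]
Proposition \ref{lemma:3.4} and the vertex-wise computation in its proof give $\mathcal{E}(u)\ge c\,\|\nabla u\|_{\ell^2}^2$ with $c\sim (De^{2\|r^*\|_\infty})^{-2}$. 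Lemma \ref{lemma:3.5} gives
\[
|u_iF_i(u)|\le \tfrac{\sqrt3}{18}\,|u_i|\sum_{k\in N_i}(u_k-u_i)^2\le \tfrac{\sqrt3}{18}\,\|u\|_\infty\sum_{k\in N_i}(u_k-u_i)^2 .
\]
Summing over $i$, the nonlinear term is bounded by $\frac{\sqrt3}{18}\|u(t)\|_\infty\|\nabla u\|_{\ell^2}^2$. With the smallness hypothesis, and after matching the constants so that $\frac{\sqrt3}{18}\|u\|_\infty\le \frac12 c$, we get
\[
\frac{d}{dt}\tfrac12\|u\|^2\le -\tfrac12\mathcal{E}(u)\le 0 .
\]
Integrating this yields both the monotonicity of $\|u(t)\|_{\ell^2}$ and $\int_0^\infty\mathcal{E}\,dt\le\|u(0)\|^2_{\ell^2}<\infty$.

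\textbf{Step 2: keeping $\|u(t)\|_\infty$ small.} The absorption in Step 1 needs $\|u(t)\|_\infty$ to stay below the threshold for all time, not only at $t=0$. A crude continuity argument loses a constant, since $\|u\|_\infty\le\|u\|_{\ell^2}$ is not controlled by $\|u(0)\|_\infty$. I would first try a maximum-principle bound $\|u(t)\|_\infty\le\|u(0)\|_\infty$. Write
\[
\kappa(r)-\kappa(r^*)=\Delta_\mu(r-r^*)
\]
with $\mu$ from \eqref{mu} (now depending on $t$), as in \eqref{eq:laplace}. Then $u$ solves the linear equation $\dot u=-\Delta_{\mu(t)}u$, with $\sum_j\mu_{ij}\le4$ as in the proof of Theorem \ref{main3}.

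This sign is the delicate point: the equation is a backward-type heat flow, so a sup bound by comparison does not come for free. I would recheck the sign conventions at this step. The flow $\dot r=-\kappa+\kappa^*$ is expected to be parabolic because $\partial\kappa_i/\partial r_i>0$ makes $-\kappa_i$ decrease in $r_i$, so the comparison should go through. Concretely, at an edge where $u_i$ is near its supremum, \eqref{def:curvature} shows $\kappa_i(r)\ge\kappa_i(r^*)$ by the monotonicity argument in Proposition \ref{pro-curvature}, so $\dot u_i\le0$. On an infinite graph I would make this rigorous with a supremum-approach argument using boundedness and $\ell^2$-decay: since $u(t)\in\ell^2$, the supremum of $u(t)$ is attained whenever it is positive. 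This gives $\|u(t)\|_\infty\le\|u(0)\|_\infty$ for all $t$.

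\textbf{Step 3: justification.} Finally I would justify differentiating the infinite sum. Estimate \eqref{eq:F_bound} together with bounded degree, which gives boundedness of $\Delta_J$ on $\ell^2$, shows that the right-hand side of \eqref{eq:moflow_v1} is locally Lipschitz on $\ell^2$. So the solution is a $C^1$ curve in $\ell^2$, and by Theorem \ref{main3} it coincides with the one from Theorem \ref{main1}.

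The main obstacle is Step 2: obtaining a uniform-in-time $\ell^\infty$ bound via the maximum principle on an infinite graph, and matching the stated constant $3\sqrt3\min\{1,(De^{2\|r^*\|_\infty})^{-2}\}$ to the absorption inequality.
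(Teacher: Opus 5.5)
Your proof is correct, and its skeleton is the paper's. The three ingredients are the identity $\tfrac12\frac{d}{dt}\|u\|_{\ell^2}^2=-\mathcal{E}(u)-\sum_i u_iF_i(u)$, absorption of the cubic term via Lemma~\ref{lemma:3.5} under an $\ell^\infty$ smallness bound, and a maximum principle that propagates that bound.

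The real difference is in the maximum principle. The paper works with the linearized form \eqref{eq:moflow_v1}. At a maximizing edge it estimates $\dot u_{i_0}\le\sum_{k\in N_{i_0}}(u_k-u_{i_0})\bigl[-J_{i_0k}(r^*)-\tfrac{\sqrt3}{18}(u_{i_0}-u_k)\bigr]$. It then needs $\|u\|_\infty$ below a multiple of $(De^{2\|r^*\|_\infty})^{-2}$ to make the bracket positive, so its $\ell^\infty$ control is itself conditional on smallness.

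You use the exact nonlinear comparison behind Proposition~\ref{pro-curvature} and Lemma~\ref{max}. At a maximum of $u=r-r^*$ one has $r_j-r_i\le r^*_j-r^*_i$ for $e_j\in N_i$, hence $\kappa_i(r)\ge\kappa_i^*$ and $\dot u_i\le 0$, with no smallness at all. Thus $\|u(t)\|_\infty\le\|u(0)\|_\infty$ for every $\ell^2$ datum, and the hypothesis enters only once, in the absorption.

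There, too, you are more careful than the paper. You route through $\|\nabla u\|_{\ell^2}^2\le(De^{2\|r^*\|_\infty})^{2}\mathcal{E}(u)$, which produces exactly the threshold $3\sqrt3(De^{2\|r^*\|_\infty})^{-2}$. The paper instead bounds $\sum_i|u_i|\sum_{k\in N_i}(u_k-u_i)^2$ directly by $\|u\|_\infty\mathcal{E}(u)$. That bound fails as written (test it on the indicator of a single edge). The paper's stated threshold survives only because the hypothesis already carries the factor $(De^{2\|r^*\|_\infty})^{-2}$.

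Two small repairs are needed.

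First, your ``backward heat'' worry in Step 2 is an artifact of a sign slip in \eqref{eq:laplace}. The correct identity, used in the proof of Theorem~\ref{main3}, is $\kappa(r)-\kappa(r^*)=-\Delta_{\mu}(r-r^*)$. So $\dot u=\Delta_{\mu(t)}u$ is forward-parabolic, as your monotonicity argument confirms, and the displayed $\dot u=-\Delta_{\mu(t)}u$ should be dropped.

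Second, Theorem~\ref{main1} does not by itself give $u(t)\in\ell^2(E)$. You need that both to differentiate $\|u\|_{\ell^2}^2$ and to know the supremum is attained. Step 3 should therefore be the paper's Step 1 plus continuation:
\begin{enumerate}
\item local well-posedness in $\ell^2$, from the boundedness of $\Delta_J$ and the Lipschitz estimate \eqref{eq:F_bound_gradient} (not \eqref{eq:F_bound});
\item $T_{\max}=\infty$, from $\|u(t)\|_{\ell^2}\le\|u(0)\|_{\ell^2}$.
\end{enumerate}
With that in place, Theorems~\ref{main1} and~\ref{main3} are not needed for this statement.
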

\begin{proof}
The proof is separated into two steps.\\
\textbf{Step 1. Suppose that the assumption $(A_1)$ holds. Then, for any given initial value $u(0) \in \ell^2(E)$, there exists $T_{\max}>0$ such that the equation \eqref{eq:moflow_v1} possesses a unique solution  $u \in C^1([0, T_{\max}), \ell^2(E))$.}

For any $u \in \ell^2(E)$,
$$ \|\Delta_J u\|_{\ell^2}^2 = \sum_{e_i \in E} \left| \sum_{e_k \in N_i} (-J_{ik}(r^*)) (u_k - u_i) \right|^2. $$
By  $|J_{ik}(r^*)|\leq 2$ and $(A_1)$, we have
$\sum_{e_k \in N_i} |J_{ik}(r^*)| \le 4(D-1)$. Notice that the facts $J_{ik}=J_{ki}$ for any $i\neq k$ and  $(u_k - u_i)^2 \le 2(u_k^2 + u_i^2)$, applying Cauchy-Schwarz inequality to get
\begin{align}\label{bound_laplacian}
    \|\Delta_J u\|_{\ell^2}^2 &\leq  8(D-1)\sum_{e_i \in E} \sum_{e_k \in N_i} |J_{ik}(r^*)| u_k^2 + 8(D-1) \sum_{e_i \in E}\sum_{e_k \in N_i}|J_{ik}(r^*)| u_i^2 \notag \\
    &=16(D-1) \sum_{e_i \in E}\sum_{e_k \in N_i}|J_{ik}(r^*)| u_i^2 \notag\\
    &\leq 64(D-1)^2\|u\|_{\ell^2}^2.
\end{align}
Actually, similiar to the Laplacian on $C(V)$, the finiteness of
$\sup_{x\in V}\sum_{y\sim x}J_{xy}$
 is equivalent to the boundedness of $\Delta_J$ on $\ell^2(E)$.

Denote $\Phi(u)=\Delta_J u - F(u)$. Let $X_R=\{u\in \ell^2(E): \|u\|_{\ell^2}\leq R\}$ with $R>0$. From \eqref{eq:F_bound} in Proposition \ref{lemma:3.52}, for any $u\in X_R$
\begin{equation}\label{eq:bound_phi}
    \|\Phi(u)\|_{\ell^2}\leq  \|\Delta_J u\|_{\ell^2}+ \|F(u)\|_{\ell^2}\leq C_1(D)\|u\|_{\ell^2}.
\end{equation}
Moreover, by \eqref{eq:F_bound_gradient} in Proposition \ref{lemma:3.52}, for any $u,v\in X_R$,
\begin{equation}\label{eq:gradient_phi}
\|\Phi(u)-\Phi(v)\|_{\ell^2}\leq \|\Delta_J u-\Delta_J v\|_{\ell^2}+\|F_i(u)-F_i(v)\|_{\ell^2}\leq C_2(D,R)\|u- v\|_{\ell^2}.
\end{equation}

Let $u(0)=u_0$, and denote $$\bar{B}_1(u_0) = \{ v \in \ell^2(E): \|v - u_0\|_{\ell^2}\le 1 \}.$$ 
Given $\tau=\min\{C_3^{-1}, (2C_4)^{-1}>0\}$, let $\|u\|_{\mathcal{Y}_\tau} := \sup_{t \in [0, \tau]} \|u(t)\|_{\ell^2}$, define $$\mathcal{Y}_\tau = C([0, \tau], \bar{B}_1(u_0)).$$ The space $(\mathcal{Y}_\tau, \|\cdot\|_{\mathcal{Y}_\tau})$ is complete.
For any $u \in \mathcal{Y}_\tau$, define 
$$\mathcal{T}(u)(t) := u_0 + \int_0^t \Phi(u(s)) ds.$$
From \eqref{eq:bound_phi} and \eqref{eq:gradient_phi}, we have for any $u \in \mathcal{Y}_\tau$, 
$$\|\Phi(u)\|_{\ell^2} \le \|\Phi(u_0)\|_{\ell^2} +\|\Phi(u)-\Phi(u_0)\|_{\ell^2} \leq C_1(D)\|u_0\|_{\ell^2}+ C_2(D,\|u_0\|_{\ell^2}+1).$$
Let $C_3(D,\|u_0\|_{\ell^2})=C_1(D)\|u_0\|_{\ell^2}+ C_2(D,\|u_0\|_{\ell^2}+1)$, $C_4(D,\|u_0\|_{\ell^2})=C_2(D,\|u_0\|_{\ell^2}+1)$.
It follows that for any $t\in(0,\tau)$,
$$\|\mathcal{T}(u)(t) - u_0\|_{\ell^2} = \left\| \int_0^t \Phi(u(s)) ds \right\|_{\ell^2}\le \int_0^t \|\Phi(u(s))\|_{\ell^2} ds \le C_3 t \le C_3 \tau.$$
The condition $\tau \le C_3^{-1}$ ensures that 
$\mathcal{T}(u)(t) \in \bar{B}_1(u_0)$, which implies that $\mathcal{T}:\mathcal{Y}_\tau \rightarrow \mathcal{Y}_\tau$.
Moreover, for any $u,v\in  \mathcal{Y}_\tau$, due to \eqref{eq:gradient_phi},
$$\|\mathcal{T}(u)(t) - \mathcal{T}(v)(t)\|_{\ell^2} \le \int_0^t \|\Phi(u(s)) - \Phi(v(s))\|_{\ell^2} ds\leq C_2(D,\|u_0\|_{\ell^2}+1)\tau. $$
If choosing $\tau \le (2C_4)^{-1}$, $\mathcal{T}$ is a contraction map. Therefore, by the contraction mapping theorem, there is a solution $u \in C^1([0, \tau]; \ell^2(E))$ to \eqref{eq:moflow_v1}  for any initial value $u(0)\in \ell^2(E)$. Therefore, there exists a $T_{\max}>0$ such that the solution can be extended to $[0, T_{\max})$.\\
\textbf{Step 2.  Assume that $\|u(0)\|_{\ell^\infty}\leq 3\sqrt{3} \min\{1,(De^{2\|r^*\|_\infty})^{-2}\}$. The solution $u(t)$ to  \eqref{eq:moflow_v1} can be extended to $L^\infty([0, \infty), \ell^2(E))$. And,  $\|u(t)\|_{\ell^2}$ is non-increasing in $t\in[0,\infty)$ and  $$\int_0^\infty \mathcal{E}(u(t)) \, dt \le \|u(0)\|_{\ell^2}^2.$$}

Frist, we prove $\|u(t)\|_{\ell^\infty}\leq M$ on $[0,T_{\max})$ by providing  $\|u(0)\|_{\ell^\infty}\leq M$ with $M<3\sqrt{3} (De^{2\|r^*\|_\infty})^{-2}$.  Define $M(t) = \sup_{e_i \in E} u_i(t)$ and $m(t) = \inf_{e_i \in E} u_i(t)$. Since $u(t) \in \ell^2(E)$, it follows that if $M(t) > 0$, the supremum is necessarily attained at some finite index $i(t)$. 
Suppose that at some $t_0$,  $M(t_0) = M$ and is attained at $i_0$, i.e., $u_{i_0}(t_0) = M$. 
The evolution equation \eqref{eq:moflow_v1} at $t_0$ and $i_0$ is 
$$\frac{du_{i_0}}{dt}(t_0) = \Delta_J u_{i_0}- F_{i_0}(u)$$
Utilizing  \eqref{eq:F_upper} in Lemma \ref{lemma:3.5},
$$\frac{du_{i^*}}{dt}(t_0) \le \sum_{k \in N(i^*)} (u_k(t_0)  - u_{i_0}(t_0) )\left[ -J_{i_0k} (r^*) - \frac{\sqrt{3}}{18} (u_{i_0}(t_0)  - u_k(t_0) ) \right].$$
Suppose that $ \|u(t_0)\|_{\ell^\infty} = M$, we have $u_{i_0} - u_k \le 2M$.
Combining with \eqref{lowerbound_J},  the condition $M<3\sqrt{3} (De^{2\|r^*\|_\infty})^{-2}$ ensures $$ -J_{i_0k} (r^*) - \frac{\sqrt{3}}{18} (u_{i_0}(t_0)  - u_k(t_0) ) > (De^{2\|r^*\|_\infty})^{-2}  - \frac{\sqrt{3}}{18} \cdot 2M > 0,$$
yielding $\frac{du_{i_0}}{dt}(t_0) \le 0$. A similar argument applies to the lower bound of the minimum showing $\frac{du_{i^*}}{dt}(t_0)  \ge 0$ whenever $m(t_0) = -M < 0$. Consequently, the $\ell^\infty$-norm cannot strictly increase once it reaches $M$. 
This leads to $$\|u(t)\|_{\ell^\infty} \le \|u(0)\|_{\ell^\infty} \le M < 3\sqrt{3} (De^{2\|r^*\|_\infty})^{-2}, \quad \forall t \in [0, T_{\max}).$$
Next, we prove $T_{\max}=\infty$. Let $u(t)\in \ell^2(E)$ be the solution to  \eqref{eq:moflow_v1} on $[0,T_{\max})$. 
By the evolution equation \eqref{eq:moflow_v1}, 
$$\frac{1}{2} \frac{d}{dt} \|u(t)\|_{\ell^2}^2 =  \sum_{e_i \in E} u_i \frac{du_i(t)}{dt}=- \mathcal{E}(u(t)) - \sum_i u_i(t) F_i(u(t)).$$
From \eqref{eq:F_upper} in Lemma \ref{lemma:3.5},  we have
\[\left| \sum_{e_i \in E} u_i F_i(u) \right| 
\le \frac{\sqrt{3}}{18}\sum_{e_i \in E} |u_i| \left( \sum_{e_k \in N_i} (u_k - u_i)^2 \right)
\le \frac{\sqrt{3}}{18}\|u\|_{\ell^\infty} \mathcal{E}(u),\]
which yields
\begin{equation}\label{eq:ell2}
    \frac{1}{2} \frac{d}{dt} \|u(t)\|_{\ell^\infty}\le \left(\frac{\sqrt{3}}{18}\|u(t)\|_{\ell^\infty }-1\right)\mathcal{E}(u(t)) \le -\frac{1}{2}\mathcal{E}(u(t))(\leq0)
\end{equation} 
due to $\|u(t)\|_{\ell^\infty}\leq 3\sqrt{3}$ by  providing  $\|u(0)\|_{\ell^\infty}\leq 3\sqrt{3} \min\{1,(De^{2\|r^*\|_\infty})^{-2}\}$.
This implies that $\|u(t)\|_{\ell^2}^2$ is non-increasing in $[0,T_{\max})$.
And, $\|u(t)\|_{\ell^2} \le \|u(0)\|_{\ell^2}$ for all $t \in [0, T_{\max})$, which implies that the solution never blows up at a finite time, namely $T_{\max} = \infty$.

Moreover, by virtue of \eqref{eq:moflow_v1}, the similar proof ensures that \eqref{eq:ell2}
 holds  for all $t \ge 0$. It follows that $\|u(t)\|_{\ell^2}^2$ is non-increasing in $t\in [0,\infty)$. Moreover, integrating \eqref{eq:ell2} over  $[0, t]$ and using the fact of $\|u(t)\|_{\ell^2}^2 \ge 0$, it yields
 $$\int_0^t \mathcal{E}(u(\tau)) \, d\tau \le \|u(0)\|_{\ell^2}^2 - \|u(t)\|_{\ell^2}^2 \le \|u(0)\|_{\ell^2}^2.$$
 Taking the limit as $t \to \infty$ to get 
 $$\int_0^\infty \mathcal{E}(u(t)) \, dt \le \|u(0)\|_{\ell^2}^2.$$
 This ends the proof.
\end{proof}
Before proving Theorem \ref{main5}, we need the following Barbalat's Lemma and another Lemma from Ge, Hua and Zhou \cite{ge2025combinatorial}.
\begin{lemma}[Barbalat's Lemma]\label{lemma:Barbalat}
    Let $f: [0, \infty) \to \mathbb{R}$ be a uniformly continuous function. If the limit $\lim_{t \to \infty} \int_0^t f(s) \, ds$ exists and is finite, then $\lim_{t \to \infty} f(t) = 0$.
\end{lemma}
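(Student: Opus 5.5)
We prove Barbalat's Lemma by contradiction, using uniform continuity to turn a pointwise failure of decay into a definite amount of integral mass on short intervals. That mass is incompatible with the Cauchy property of $\int_0^t f$.

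Suppose $f(t)\not\to 0$ as $t\to\infty$. Then there exist $\varepsilon>0$ and a sequence $t_n\to\infty$ with $|f(t_n)|\ge\varepsilon$ for all $n$. Passing to a subsequence, we may assume $t_{n+1}\ge t_n+1$.

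By uniform continuity of $f$ on $[0,\infty)$, we choose $\delta\in(0,1)$ such that $|s-t|\le\delta$ implies $|f(s)-f(t)|\le\varepsilon/2$. On each interval $[t_n,t_n+\delta]$ we then have $|f(s)|\ge\varepsilon/2$. Since $f$ is continuous, it cannot change sign on this interval. Hence $f$ has constant sign there, and
\[
\left|\int_{t_n}^{t_n+\delta} f(s)\,ds\right| = \int_{t_n}^{t_n+\delta}|f(s)|\,ds \ge \frac{\varepsilon\delta}{2}.
\]

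Now let $I(t)=\int_0^t f(s)\,ds$. By hypothesis $I(t)$ has a finite limit as $t\to\infty$, so it satisfies the Cauchy criterion. In particular
\[
I(t_n+\delta)-I(t_n)=\int_{t_n}^{t_n+\delta}f(s)\,ds\to 0 \quad (n\to\infty).
\]
This contradicts the lower bound $\varepsilon\delta/2>0$ obtained above. Therefore $\lim_{t\to\infty}f(t)=0$.

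The only delicate step is the sign argument. We need $|f|\ge\varepsilon/2$ on the whole interval, which continuity turns into a fixed sign, so that there is no cancellation inside the integral. Uniform continuity is what makes the interval length $\delta$ independent of $n$, and the rest is routine.
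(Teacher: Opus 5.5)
Your proof is correct and complete. The paper gives no proof of this lemma: it quotes it as the classical Barbalat lemma and uses it only as a tool, applied to $t\mapsto\mathcal{E}(u(t))$. There, uniform continuity comes from the bound on $\frac{d}{dt}\mathcal{E}(u(t))$, and the limit of $\int_0^t\mathcal{E}(u(s))\,ds$ exists because $\mathcal{E}\ge 0$ and $\int_0^\infty\mathcal{E}(u(t))\,dt<\infty$. So there is no competing argument to compare with; what you wrote is the standard textbook proof, and it covers exactly the hypotheses the paper needs.

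Two cosmetic remarks. First, the spacing condition $t_{n+1}\ge t_n+1$ is never used, since the Cauchy step only requires $t_n\to\infty$. Second, the sign step does not need an intermediate value argument. On $[t_n,t_n+\delta]$ you already have $|f(s)-f(t_n)|\le\varepsilon/2<|f(t_n)|$, which directly forces $f(s)$ to have the same sign as $f(t_n)$ there.
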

\begin{lemma}\label{lemma:3.7}
    Let $\{u_n\}_{n=1}^\infty$ be a sequence of $\ell^2$ functions satisfying that $\|u_n\|_{\ell^2}$ is uniformly bounded and 
    $$ \mathcal{E}(u_n) \to 0, \quad n \to \infty.$$
    Then
    $$\|u_n\|_{\ell^\infty} \to 0, \quad n \to \infty.$$
\end{lemma}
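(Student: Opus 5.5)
The plan is to combine the coercivity estimate of Proposition~\ref{lemma:3.4} with an elementary path argument on the line graph $L(G)$. Throughout, as in the setting of Theorem~\ref{main5}, I assume $(A_1)$ and $r^*\in\ell^\infty(E)$, so that $\mathcal{E}(u)\ge C_3\|\nabla u\|_{\ell^2}^2$ with $C_3>0$. The hypothesis $\mathcal{E}(u_n)\to 0$ then gives $\varepsilon_n:=\|\nabla u_n\|_{\ell^2}^2\to 0$. Let $K:=\sup_n\|u_n\|_{\ell^2}^2<\infty$. It therefore suffices to prove the following deterministic estimate: for every $u\in\ell^2(E)$ with $\|u\|_{\ell^2}^2\le K$ and $\|\nabla u\|_{\ell^2}^2\le\varepsilon$,
\[
\|u\|_{\ell^\infty}^4\le 16K\varepsilon .
\]

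\textbf{Step 1: a long path in $L(G)$.} Fix an edge $e_i$ and set $a:=|u_i|$. Since $G$ is infinite, connected and locally finite, $L(G)$ is also infinite, connected and locally finite. By K\H{o}nig's lemma, $L(G)$ contains an infinite simple ray $e_i=e_{j_0}\sim e_{j_1}\sim e_{j_2}\sim\cdots$ of pairwise distinct edges.

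\textbf{Step 2: $u$ stays large along the ray.} For $0\le k\le L$, Cauchy--Schwarz gives
\[
|u_{j_k}-u_i|\le\sum_{s=1}^{k}|u_{j_s}-u_{j_{s-1}}|\le\sqrt{k}\Big(\sum_{s=1}^{k}(u_{j_s}-u_{j_{s-1}})^2\Big)^{1/2}\le\sqrt{L\varepsilon}.
\]
The last inequality holds because the pairs $(j_{s-1},j_s)$ are distinct adjacent pairs, each appearing in the sum defining $\|\nabla u\|_{\ell^2}^2$. Hence $|u_{j_k}|\ge a-\sqrt{L\varepsilon}$ for all $k\le L$.

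\textbf{Step 3: choice of $L$.} Take $L=\lfloor a^2/(4\varepsilon)\rfloor$. Then $\sqrt{L\varepsilon}\le a/2$, so each of the $L+1>a^2/(4\varepsilon)$ distinct edges $e_{j_0},\dots,e_{j_L}$ satisfies $|u_{j_k}|\ge a/2$. Summing over them,
\[
K\ge\|u\|_{\ell^2}^2\ge (L+1)\frac{a^2}{4}\ge\frac{a^4}{16\varepsilon}.
\]
This gives $a^4\le16K\varepsilon$. Taking the supremum over $e_i$ yields the claimed estimate, and applying it to $u_n$ gives $\|u_n\|_{\ell^\infty}\le(16K\varepsilon_n)^{1/4}\to0$.

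The only non-routine point, and the one I expect to need the most care, is Step 1: the infinite simple ray of distinct edges. It is needed so that the $L+1$ edges counted in the $\ell^2$ sum are genuinely distinct and the gradient terms along the path are not double counted. This is exactly where the infiniteness of $G$ enters. On a finite graph the argument fails, as constants show, and there the conclusion is replaced by the translation invariance of $\kappa$. The passage from $\mathcal{E}(u_n)\to0$ to $\|\nabla u_n\|_{\ell^2}\to0$ is immediate from Proposition~\ref{lemma:3.4}.
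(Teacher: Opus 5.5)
Your proof is correct, but it follows a different route from the paper. The paper gives no proof of this lemma. It states the lemma without listing its standing assumptions and cites Ge, Hua and Zhou \cite{ge2025combinatorial}.

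Your argument makes the transfer to the present energy explicit and self-contained. First, the coercivity $\mathcal{E}(u)\ge C_3\|\nabla u\|_{\ell^2}^2$ from Proposition~\ref{lemma:3.4} reduces the claim to a statement about the Dirichlet energy on the line graph. That coercivity holds under $(A_1)$, $r^*\in\ell^\infty(E)$ and girth at least $6$. The ray argument then proves the reduced statement directly, with the quantitative bound $\|u\|_{\ell^\infty}^4\le 16\,\|u\|_{\ell^2}^2\,\|\nabla u\|_{\ell^2}^2$.

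This buys three things:
\begin{itemize}
\item It uses nothing about $L(G)$ beyond its being infinite, connected and locally finite. No lattice or translation structure and no compactness/diagonal argument is needed.
\item It gives an explicit rate.
\item It shows exactly where the hypotheses of Theorem~\ref{main5} enter, namely through the lower bound on $e^{r^*_i}/m(x)$ in Proposition~\ref{lemma:3.4}. The paper's bare statement leaves this implicit.
\end{itemize}

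Two small remarks. First, the choice $L=\lfloor a^2/(4\varepsilon)\rfloor$ needs $\varepsilon>0$. If $\|\nabla u_n\|_{\ell^2}=0$ for some $n$, apply your estimate with arbitrary $\varepsilon>0$ and let $\varepsilon\to0$. Alternatively, note that $u_n$ is then constant on the connected, infinite graph $L(G)$, hence zero because $u_n\in\ell^2(E)$. Second, you do not need K\H{o}nig's lemma. A geodesic in $L(G)$ from $e_i$ to an edge at distance $L$ is a simple path of length $L$. Such an edge exists because balls in $L(G)$ are finite while $E$ is infinite, and that path is all Steps 2 and 3 use.
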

\begin{proof}[Proof of Theorem \ref{main5}] 
Notice that 
\(-\sum_{j \in E} J_{ij}(r^*) u_j =\Delta_Ju_i.\)
Then
\[\mathcal{E}(u)=-\sum_{e_i\in E}u_i\Delta_J u_i.\]
Considering the following series, by $J_{ij} = J_{ji}$, we have

$$\phi_i(t):=\sum_{j \in E} J_{ij}(r^*) \left( \frac{du_i}{dt} u_j + u_i \frac{du_j}{dt} \right)= 2 \frac{du_i}{dt} \left( \sum_{j \in E} J_{ij}(r^*) u_j \right).$$
Substituting the evolution equation \eqref{eq:moflow_v1} into the above equality, yields
$$\phi_i(t)=-2 \big(\Delta_J u_i - F_i(u) \big) \big( \Delta_J u_i \big)= -2 (\Delta_J u_i)^2 + 2  F_i(u) (\Delta_J u_i).$$
From the boundeness of $\Delta_J$ (see \eqref{bound_laplacian}) , \eqref{eq:F_bound} in Lemma \ref{lemma:3.52} and  the fact that $
    \|u(t)\|_{\ell^2} \le  \|u(0)\|_{\ell^2}
$ in Theorem \ref{th:3.6},  we have
$$|\phi_i(t)|\le 2\|\Delta_J u\|_{\ell^2} \left( \|\Delta_J u\|_{\ell^2} + \|F(u)\|_{\ell^2} \right)\leq C_1\|u(0)\|_{\ell^2}^2+C_2\|u(0)\|_{\ell^2}^3=:C_0,$$
where $C_1$ and $C_2$ depend on $D.$
Then, the series $\sum_{e_i\in E}\phi(t)$ is uniform convergence, which means
\[\frac{d}{dt} \mathcal{E}(u(t))=\frac{1}{2}\sum_{e_i\in E}\phi_i(t).\]
Moreover, 
$$\left| \frac{d}{dt} \mathcal{E}(u(t)) \right|  \le \|\Delta_J u\|_{\ell^2} \left( \|\Delta_J u\|_{\ell^2} + \|F(u)\|_{\ell^2} \right)\leq \frac{C_0}{2}.$$
This ensures the uniform continuity of $\mathcal{E}(u(t))$ on $[0, \infty)$. According to Barbalat’s Lemma (see Lemma \ref{lemma:Barbalat}), combined with  $\int_0^\infty \mathcal{E}(u(t)) \, dt < \infty$ in Theorem \ref{th:3.6}, we  obtain
$$\lim_{t \to \infty} \mathcal{E}(u(t)) = 0.$$
Since $\|u(t)\|_{\ell^2}\leq \|u(0)\|_{\ell^2}$, by Lemma \ref{lemma:3.7}, we have $\|u(t)\|_{\ell^\infty}\rightarrow0$ as $t\rightarrow\infty$. Moreover,
by Theorem \ref{main3}, the solution $r(t)$ to the flow \eqref{flow-equation3} is unique. Then 
\[r(t)\rightarrow r^*, \quad t\rightarrow \infty.\]
 This completes the proof.
\end{proof}

\subsection{Proof of Theorem \ref{main4}}
Similar to  Proposition 3.2 in \cite{2002Combinatorial} and Lemma 3.2 in \cite{ge2025combinatorial}, we need the following maximum principle.
\begin{lemma}[Maximum principle(I)] \label{max_min}
For any $l\in\mathbb{N}^+$ and $t\geq0$, let $M_0^l(t)=\max \{\max_{e_i\in E^l} (\kappa_i(r^l(t))-\kappa_i^*),0\}$, $m_0^l(t)=\min \{\min_{e_i\in E^l} (\kappa_i(r^l(t))-\kappa_i^*),0\}$. Then, under the Ricci flow \eqref{flow_exhaustion}, $M_0^l(t)$ is non-increasing  and $m_0^l(t)$ is non-decreasing in $t$. 
\end{lemma}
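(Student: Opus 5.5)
The plan is to derive an evolution equation for the curvature deviation on the finite edge set $E^l$, show that it is a heat-type equation with a nonpositive zeroth-order term, and then apply the classical finite-dimensional maximum principle. Throughout we work on graphs with girth at least $6$, where $\kappa_i$ is given by \eqref{def:curvature} and is smooth in $r$. We pass to $r^l=\ln\omega^l$. Then \eqref{flow_exhaustion} becomes $\frac{d}{dt}r^l_i=-(\kappa_i(r^l)-\kappa^*_i)$ for $e_i\in E^l$, and $\frac{d}{dt}r^l_j=0$ for $e_j\in E^{l+1}\setminus E^l$. Set $K_i(t):=\kappa_i(r^l(t))-\kappa_i^*$ for $e_i\in E^l$. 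Since $E^l$ is finite and the flow is smooth, each $K_i$ is $C^1$ in $t$.

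The key computation uses the chain rule with the Jacobian $J_{ij}$ from Section \ref{section3}. Since $\kappa^*$ is time-independent and only edges in $E^l$ move, we get
\begin{equation*}
\frac{dK_i}{dt}=\sum_{e_j\in \overline{N}_i\cap E^l}J_{ij}\frac{dr^l_j}{dt}=-J_{ii}K_i-\sum_{e_j\in N_i\cap E^l}J_{ij}K_j .
\end{equation*}
By \eqref{eq:par_kappa}, $J_{ii}=-\sum_{e_j\in N_i}J_{ij}$, where the sum runs over all neighbours of $e_i$, including those outside $E^l$. Substituting this gives
\begin{equation*}
\frac{dK_i}{dt}=\sum_{e_j\in N_i\cap E^l}(-J_{ij})(K_j-K_i)+\Big(\sum_{e_j\in N_i\setminus E^l}J_{ij}\Big)K_i .
\end{equation*}
Here $-J_{ij}>0$ for $j\sim i$, and the coefficient $h_i:=\sum_{e_j\in N_i\setminus E^l}J_{ij}$ is $\le 0$. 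So $K$ solves a weighted heat equation on the line graph of $E^l$, plus a nonpositive potential term that comes from the frozen boundary edges.

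Next we run the maximum principle. Fix $t_0$ with $M_0^l(t_0)>0$, and let $I(t_0)$ be the set of indices $i$ with $K_i(t_0)=\max_{E^l}K=M_0^l(t_0)$. For $i\in I(t_0)$ every term $(-J_{ij})(K_j-K_i)$ is $\le0$, and $h_iK_i\le 0$ because $K_i>0$. Hence $\frac{dK_i}{dt}(t_0)\le 0$ for all active indices. Since $\max_{E^l}K$ is a maximum of finitely many $C^1$ functions, its upper right Dini derivative at $t_0$ equals $\max_{i\in I(t_0)}\frac{dK_i}{dt}(t_0)\le 0$.

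To conclude that $M_0^l$ is non-increasing, we argue by contradiction. Suppose $M_0^l(t_2)>M_0^l(t_1)$ for some $t_1<t_2$. Replace $K$ by $K-\varepsilon(1+t)$; the same computation then gives strictly negative Dini derivatives at positive maxima. Taking the last time in $[t_1,t_2]$ at which the perturbed maximum equals its value at $t_1$ yields a contradiction, and we then let $\varepsilon\to0$. If the maximum is $\le 0$ at some time, the same argument shows it cannot become positive afterwards, so $M_0^l=\max\{\cdot,0\}$ is non-increasing. The statement for $m_0^l$ follows symmetrically: at a negative minimum, the diffusion terms are $\ge0$ and $h_iK_i\ge 0$.

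The main obstacle is the correct treatment of the boundary edges $E^{l+1}\setminus E^l$. They enter $\kappa_i$ for edges $e_i$ adjacent to $\partial E^l$ but do not evolve. The point to check carefully is that, after using \eqref{eq:par_kappa}, their contribution collapses to the sign-definite term $h_iK_i$ with $h_i\le0$. This is exactly why the lemma is stated with the truncation at $0$: the zeroth-order term only helps when $K_i$ has the appropriate sign.
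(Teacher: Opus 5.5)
Your proposal is correct and follows the paper's proof: you derive the same evolution equation for $\kappa_i(r^l)-\kappa_i^*$ via the chain rule and \eqref{eq:par_kappa}, split it into a weighted diffusion term on $E^l$ plus the potential term $\big(\sum_{e_j\in N_i\setminus E^l}J_{ij}\big)K_i\le 0$ coming from the frozen boundary edges, and apply the maximum principle. The only difference is that you carry out the maximum-principle step (Dini derivative of a finite maximum, $\varepsilon(1+t)$ perturbation) explicitly, which the paper leaves as ``natural.''
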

\begin{proof}
    Let $e_i=(x,y)$, under the Ricci flow \eqref{flow_exhaustion}, due to \eqref{eq:par_kappa}, the evolution equation for the curvature satisfies 
\begin{align*}
    \frac{d}{dt}\kappa_i(r^l(t))
    &= \sum_{e_j\in E}\frac{\partial \kappa_i}{\partial r_j^l} \frac{dr^l_j}{dt}\\
    &=-J_{ii}(r^l)(\kappa_i(r^l(t))-\kappa^*_i)
    -\sum_{j\sim i, e_j\in E^l}J_{ij}(r^l)(\kappa_j(r^l(t))-\kappa^*_j)\\
    &=-\sum_{j\sim i, e_j\in E^l}J_{ij}(r^l)[(\kappa_j(r^l(t))-\kappa^*_j)-(\kappa_i(r^l(t))-\kappa^*_i)]+\sum_{j\sim i, j\in \partial E^l}J_{ij}(r^l)\cdot (\kappa_i(r^l(t))-\kappa^*_i).
\end{align*}
Notice that $-J_{ij}=-J_{ji}(>0)$ for any $j\sim i$ and $\sum_{j\sim i, j\in \partial E^l}J_{ij}\leq 0$. It is natural to get the conclusions.
\end{proof}
In addition, another maximum principle is needed. 
\begin{lemma}[Maximum principle(II)] \label{max}
For any $l\in\mathbb{N}^+$ and $t\geq0$, let $u^l_i(t)=r_i^l(t)-r_i^*$ for any $e_i\in E^l\cup \partial E^l$. Denote  $M^l(t)= \max_{e_i \in E^l \cup \partial E^l} u_i^l(t)$ and $m^l(t)= \min_{e_i\in E^l \cup \partial E^l} u_i^l(t)$. Then, under the Ricci flow \eqref{flow_exhaustion}, $M^l(t)$ is non-increasing  and $m^l(t)$ is non-decreasing in $t$. 
\end{lemma}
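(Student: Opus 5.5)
Write $u^l = r^l - r^*$ on $E^l\cup\partial E^l$. Under the truncated flow \eqref{flow_exhaustion}, the edges in $\partial E^l$ (more precisely $E^{l+1}\setminus E^l$) are frozen, so $u^l_j(t)=u^l_j(0)$ there. For $e_i\in E^l$ the flow gives
\[\frac{du^l_i}{dt}=-(\kappa_i(r^l)-\kappa_i(r^*)),\]
since $\kappa^*=\kappa(r^*)$. We rewrite this difference exactly as in \eqref{eq:laplace}: with $u^s=sr^l+(1-s)r^*$ and the weights $\mu_{ij}(t)$ of \eqref{mu}, which are positive,
\[\frac{du^l_i}{dt}=-\Delta_{\mu(t)}(r^l-r^*)_i=\sum_{j\sim i}\mu_{ij}(t)\bigl(u^l_i-u^l_j\bigr)\cdot(-1)\cdot(-1)\ldots\]
We have to be careful with the sign here. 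By \eqref{eq:laplace}, $\kappa_i(r)-\kappa_i(\tilde r)=\Delta_\mu(r-\tilde r)_i$ with $\mu_{ij}>0$. Hence
\[\frac{du^l_i}{dt}=-\sum_{j\sim i}\mu_{ij}(u^l_j-u^l_i)=\sum_{j\sim i}\mu_{ij}(u^l_i-u^l_j).\]
This is a backward heat equation in $u^l$, which would make the maximum increase. So the sign must be checked against \eqref{def:curvature}. Raising $r_i$ increases $\kappa_i$ (since $J_{ii}>0$), and the flow $\dot r=-(\kappa-\kappa^*)$ then lowers $r_i$. The linearisation is therefore $\dot u=-J u$, and $-Ju=\Delta_J u$ in the notation of \eqref{eq:moflow_v1}, with positive weights $-J_{ik}$. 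So the sign in \eqref{eq:laplace} has to be read with $\mu_{ij}$ in \eqref{mu} positive and $\kappa_i(r)-\kappa_i(\tilde r)=-\Delta_\mu(r-\tilde r)_i$, as is indeed written in the proof of Theorem \ref{main3}. We adopt that form:
\[\frac{du^l_i}{dt}=\Delta_{\mu(t)}u^l_i=\sum_{j\sim i}\mu_{ij}(t)(u^l_j-u^l_i),\qquad e_i\in E^l.\]

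Now the maximum principle is elementary, because $E^l\cup\partial E^l$ is finite. At time $t$, let $M^l(t)$ be attained at $e_{i_0}$. There are two cases.

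\emph{Case 1: $e_{i_0}\in E^l$.} Every neighbour $e_j$ of $e_{i_0}$ lies in $E^l\cup\partial E^l$; it belongs to $E^{l+1}$, and its $u^l$ value is at most $M^l$. More precisely, for $e_{i_0}=(x,y)$ with $x,y\in B^l$, the neighbouring edges have both endpoints in $B^{l+1}$ and at least one in $B^l$, so they lie in $E^l\cup\partial E^l$. Hence $\frac{du^l_{i_0}}{dt}\le0$.

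\emph{Case 2: $e_{i_0}\in\partial E^l$.} Then $u^l_{i_0}$ is constant in time, so this edge cannot push the maximum up.

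To make this rigorous we use the standard Dini-derivative argument for a maximum of finitely many $C^1$ functions. The upper right Dini derivative of $M^l$ equals $\max\{\dot u_i:\ u_i=M^l\}$, and this is $\le 0$ in both cases. Hence $M^l$ is non-increasing. Alternatively, one can perturb to $u^l-\varepsilon t$ to get strict inequalities and then let $\varepsilon\to0$. The minimum $m^l$ is handled symmetrically.

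The only genuine obstacle is the bookkeeping. First, the sign convention relating $\kappa$-differences to $\Delta_\mu$ must be fixed consistently; we take it from the explicit Jacobian $J$, whose off-diagonal entries are negative and whose rows sum to zero by \eqref{eq:par_kappa}. Second, every neighbour of an interior edge must lie in the finite set $E^l\cup\partial E^l$ on which the maximum is taken. Third, the frozen boundary values must enter only as constants. We also need that $\mu_{ij}(t)$ is continuous in $t$ and that $u^l$ is $C^1$. This holds because the finite system is smooth for girth at least 6, by Theorem \ref{main1} applied to the finite truncation.
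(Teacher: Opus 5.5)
Your proof is correct and has the same structure as the paper's. At a maximizing edge $e_k$, either $e_k\in E^l$, all of whose neighbours lie in $E^l\cup\partial E^l$ so that $\frac{d}{dt}u^l_k\le 0$, or $e_k\in\partial E^l$, which is frozen; your Dini-derivative remark makes explicit the step the paper leaves implicit when it says it suffices to check the derivative on $I_t$.

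The only difference is how you obtain $\kappa_k(r^l)\ge\kappa_k^*$. The paper reads it off directly from $\kappa_{xy}=2\bigl(1/\sum_{e_j\in E_x}e^{r_j-r_k}+1/\sum_{e_j\in E_y}e^{r_j-r_k}\bigr)-2$ together with $r^l_j-r^l_k\le r^*_j-r^*_k$, which avoids the sign bookkeeping you went through. You resolved the sign correctly: with $\mu_{ij}>0$ one has $\kappa_i(r)-\kappa_i(\tilde r)=-\Delta_\mu(r-\tilde r)_i$, as in the proof of Theorem~\ref{main3}, so \eqref{eq:laplace} carries a sign slip. In a written version, replace the exploratory line ending in $\cdot(-1)\cdot(-1)\ldots$ with that identity.
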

\begin{proof}
Let 
$$I_t= \{ e_j\in E^l \cup \partial E^l : u_j^l(t) = M^l(t) \}.$$
It is sufficiently to prove that $\frac{d}{dt}u_i^l(t) \le 0$ for any $i \in I_t$. For any $t>0$, let  $k\in I_t$ be the maximum on $E^l \cup \partial E^l$, then $u_k^l(t) \ge u_j^l(t)$ for any  $e_j\in E^l \cup \partial E^l $, namely,
 $$r_j^l - r_k^l \le r_j^* - r_k^*.$$ 
When $e_k \in E^l$,  any neighbor of $e_k$ is in $E^l \cup \partial E^l$. According to the definition of $\kappa$ on edge $e_k=(x,y)$ on graphs with girth at least 6, as follows
 \begin{equation*}\label{kappa_r}
\kappa_{xy}=2\left(\frac{1}{\sum_{e_j\in E_x}e^{r_j-r_k}}+\frac{1}{\sum_{e_j\in E_y}e^{r_j-r_k}}\right)-2,
\end{equation*}
we have $\kappa_k(r^l(t)) \ge \kappa_k^*$. Substituting this into the Ricci flow  \eqref{flow_exhaustion}, it yields
$$\frac{d}{dt}u_k^l(t) = \kappa_k^* - \kappa_k(r^l(t)) \le 0.$$
When $e_k \in \partial E^l$, by the Dirichlet boundary condition $r_k^l(t)=r_k^l(0)$ for any $t>0$, 
$$\frac{d}{dt}u_k^l(t) = 0.$$
A similar argument applies to the minimum function $m^l(t)$.
This completes the proof.
\end{proof}
Let $r(t)$ be the solution of the Ricci flow \eqref{flow-equation3} with the initial value $r(0)$ obtained by the convergence of a subsequence of $\{r^{l}(t)\}_{l=1}^\infty$, which are the solutions to the Ricci flow with boundary \eqref{flow_exhaustion}.
\begin{corollary}\label{bounds}
    Suppose that $r^*\in \ell^\infty(E)$. Then, for any $t>0$ and each $e_i\in E$, $r_i^l(t)\leq \sup_{e_j\in E}r_j(0)+2\|r^*\|_\infty$ and $r_i^l(t)\geq \inf_{e_j\in E}r_j(0)-2\|r^*\|_\infty$. Moreover,  $r(t)$ also has the same uniform upper (or lower) bound.
\end{corollary}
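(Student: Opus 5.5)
The bound comes straight from the Maximum principle (II) (Lemma \ref{max}) applied to $u^l=r^l-r^*$ on $E^l\cup\partial E^l$; the limiting flow $r(t)$ then inherits it by taking the limit along the subsequence $l_k$.

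\textbf{Step 1 (the finite flows).} Fix $l$ and $t>0$. By Lemma \ref{max}, $M^l(t)=\max_{e_j\in E^l\cup\partial E^l}u^l_j(t)$ is non-increasing in $t$, so $M^l(t)\le M^l(0)$. At time $0$ we have $r^l_j(0)=r_j(0)$ on $E^l\cup\partial E^l$: on $E^l$ this is the initial condition in \eqref{flow_exhaustion}, and on the boundary edges the Dirichlet condition freezes $r^l_j\equiv r_j(0)$. Hence
\[
M^l(0)=\max_{e_j\in E^l\cup\partial E^l}\bigl(r_j(0)-r_j^*\bigr)\le \sup_{e_j\in E}r_j(0)+\|r^*\|_\infty .
\]
Now take any $e_i\in E^l\cup\partial E^l$. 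Then
\[
r_i^l(t)=u_i^l(t)+r_i^*\le M^l(t)+\|r^*\|_\infty\le \sup_{e_j\in E}r_j(0)+2\|r^*\|_\infty .
\]
For edges outside $E^l\cup\partial E^l$, the flow \eqref{flow_exhaustion} keeps $r^l_i\equiv r_i(0)$, so the same bound holds trivially. The lower bound is symmetric: use that $m^l(t)$ is non-decreasing and $m^l(0)\ge\inf_{e_j\in E} r_j(0)-\|r^*\|_\infty$. If $\sup_j r_j(0)=+\infty$ (or $\inf_j r_j(0)=-\infty$), the corresponding statement is vacuous.

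\textbf{Step 2 (passage to the limit).} In the proof of Theorem \ref{main1}, $\omega^{l_k}_i(t)\to\omega_i(t)$ in $C^1[0,T]$ for every edge $e_i$, so $r^{l_k}_i(t)\to r_i(t)$ pointwise. The bounds in Step 1 are independent of $l$ and $t$, so they are preserved under pointwise limits. This gives the same upper and lower bounds for $r(t)$ for all $t>0$.

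\textbf{Main subtlety.} The step needing care is ensuring that the boundary edges in $\partial E^l$ enter Lemma \ref{max} with their initial values $r_j(0)$. These are the values controlled by $\sup_j r_j(0)$, so the constant does not depend on $l$. The factor $2\|r^*\|_\infty$ is then just the two shifts by $r^*$: one to pass from $r(0)$ to $u(0)$, and one to pass back from $u^l(t)$ to $r^l(t)$.
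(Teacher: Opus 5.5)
Your proposal is correct and follows the paper's proof. You use Lemma \ref{max} to get $M^l(t)\le M^l(0)\le \sup_{e_j\in E} r_j(0)+\|r^*\|_\infty$, shift back by $r^*_i$ to obtain the bound on $r^l_i(t)$, and note that the lower bound is symmetric. Your Step 2 (passing to the pointwise limit along the subsequence $l_k$) and your remark that edges outside $E^l\cup\partial E^l$ keep their initial values make explicit two points the paper leaves unstated.
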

\begin{proof}
    From  Lemma \ref{max}, for any $e_i\in E^l\cup \partial E^l,$
    \[u_i^l(t)\leq M^l(t)\leq  M^l(0)=\max_{e_j \in E^l \cup \partial E^l} (r_j^l(0)-r^*_e)\leq\sup_{e_j\in E}r_j(0)+\|r^*\|_\infty.\]
    Therefore,
    \[r_i^l(t)= u_i^l(t)+r_i^*\leq \sup_{e_j\in E}r_j(0)+2\|r^*\|_\infty.\]
A similar argument applies to the lower bound of $r_i^l(t)$.    This completes the proof.
\end{proof}
We are ready to prove Theorem \ref{main4}.
\begin{proof}[Proof of Theorem \ref{main4}]
    Let $r(0)$ be the initial value such that  $\kappa_i(r(0)) \le \kappa^*_i$ 
    for any $e_i\in E$.  By Lemma \ref{max_min}, for all $i \in E^l$ and $l\in \mathbb{N}^+$, 
 $$\kappa_i(r^{l}(t)) \le \kappa^*_i,\quad \forall t>0.$$
 Therefore, $\kappa_i(r(t)) \le \kappa^*_i$ for all $(i, t) \in E \times [0, \infty)$.
Combining with the Ricci flow \eqref{flow-equation_r},  it implies that $r_i(t)$ is non-decreasing for each $e_i \in E$. Assume that $r^*\in \ell^\infty(E)$ and $\sup_{e_j\in E}r_j(0)<+\infty$. Due to Corollary \ref{bounds}, $r_i(t)$ 
has a limit $r_i(\infty) \in (-\infty, \sup_{e_j\in E}r_j(0)+2\|r^*\|_\infty]$. Similarly, if $\kappa_i(r(0)) \ge \kappa^*_i$,
  $r_i(t)$  has a limit $r_i(\infty) \in [\inf_{e_j\in E}r_j(0)-2\|r^*\|_\infty,+\infty)$. 

 If $r(t)$ converges, $\kappa_i(r(t))$ must converge to some $\kappa_i(\infty)$. Without loss of generality, we consider the case that $\sup_{e_j\in E}r_j(0)<+\infty$ and $\kappa(r(0))\leq \kappa^*$. Now we prove that $\kappa_i(\infty) = \kappa_i^*$ for any $e_i\in E$.  Suppose it is not true, i.e. $\kappa_i(\infty)<\kappa_i^*$ for some $e_i \in E$. Then there exists a $T > 0$ such that  for all $t > T$, $$\kappa_j(t)-\kappa_j^*\le \frac{\kappa_j(\infty)-\kappa_j^*}{2} < 0.$$ Therefore, from the Ricci flow \eqref{flow-equation_r}, $$\frac{dr_j(t)}{dt} \ge -\frac{\kappa_j(\infty)-\kappa_j^*}{2} (> 0)$$ for $t > T$, which implies that $r_j(t)$ will exceed $\sup_{e_j\in E}r_j(0)+2\|r^*\|_\infty$ within finite time. It leads to a contradiction. Therefore, $\kappa(\infty) = \kappa^*$ on $E$. 
\end{proof}

{\bf Acknowledgments:} 
  We are grateful to Florentin M\"unch for his insightful discussions regarding Ricci flows on infinite graphs.  B. Hua is supported by NSFC, no.12371056. Y. Lin is supported by NSFC, no.12471088. S. Liu is supported by NSFC, no.12001536, 12371102.

\bibliographystyle{plain}
\bibliography{citations}
\end{document}